\documentclass[10pt]{amsart}

\usepackage{amssymb,amsmath,amscd,amsthm,amsfonts,wasysym,mathrsfs,amsxtra}
\input xy
\xyoption{all}

\newcommand{\RR}{\mathbb{R}}
\newcommand{\OO}{\mathscr{O}}

\newcommand{\MM}{\mathcal{M}}

\newcommand{\Cfield}{\mathbb{C}}

\newcommand{\Spec}{\textnormal{Spec}\,}

\newcommand{\image}{\textnormal{im}\,}

\newcommand{\HHom}{\mathscr{H}om} 
\newcommand{\Hom}{\textnormal{Hom}}
\newcommand{\dimension}{\textnormal{dim}\,}
\newcommand{\codimension}{\textnormal{codim}\,}

\newcommand{\Ext}{\textnormal{Ext}}
\newcommand{\EExt}{\mathscr{E}xt}

\newcommand{\al}{\alpha}

\newcommand{\Coh}{\textnormal{Coh}}

\newcommand{\Ap}{\mathcal{A}^p}

\newcommand{\Lo}{\overset{L}{\otimes}}

\newcommand{\arinj}{\ar@{^{(}->}}
\newcommand{\arsurj}{\ar@{->>}}

\newcommand{\Bob}{\mathcal B_{\omega, B}}

\newtheorem{theorem}{Theorem}[section]

\newtheorem{lemma}[theorem]{Lemma}
\newtheorem{pro}[theorem]{Proposition}
\newtheorem{proposition}[theorem]{Proposition}

\newtheorem{corollary}[theorem]{Corollary}
\theoremstyle{definition}
\newtheorem{definition}[theorem]{Definition}

\theoremstyle{remark}
\newtheorem{remark}[theorem]{Remark}

\numberwithin{equation}{section}

\begin{document}

\title[Stability and Fourier-Mukai Transforms]
{Stability and Fourier-Mukai Transforms on Higher Dimensional Elliptic Fibrations}

\author[Wu-yen Chuang, Jason Lo]{Wu-yen Chuang, Jason Lo}
\address{Department of Mathematics, National Taiwan University, Taipei, Taiwan} \email{wychuang@ntu.edu.tw}
\address{Department of Mathematics, University of Illinois at Urbana-Champaign, Urbana IL 61801, USA} \email{jccl@illinois.edu}

\keywords{elliptic fibrations, stability, moduli, Fourier-Mukai transforms.}
\subjclass[2010]{Primary 14J60; Secondary: 14J27, 14J30}

\begin{abstract}
We consider elliptic fibrations with arbitrary base dimensions, and generalise most of the results in \cite{Lo1, Lo2, Lo5}.  In particular, we check universal closedness for the moduli of semistable objects with respect to a polynomial stability that reduces to PT-stability on threefolds.  We also show openness of this polynomial stability.  On the other hand, we write down a criterion under which certain 2-term polynomial semistable complexes are mapped to torsion-free semistable sheaves under a Fourier-Mukai transform.  As an application, we construct an open immersion from a moduli of  complexes to a moduli of Gieseker stable sheaves on higher dimensional elliptic fibrations.
\end{abstract}


\maketitle

\section{Introduction}
Since its first introduction, Fourier-Mukai transforms have been proved to provide a useful method to study moduli problems on a variety $X$ in terms of moduli on the Fourier-Mukai partner $Y$.  For example, Bridgeland \cite{FMTes} showed that if $X$ is a relatively minimal elliptic surface, then Hilbert schemes of points on $Y$ are birationally equivalent to moduli of stable torsion-free sheaves on $X$.  If $X$ is an elliptic threefold, then Bridgeland-Maciocia \cite{BMef} showed that any connected component of a complete moduli of rank-one torsion-free sheaves is isomorphic to a component of the moduli of stable torsion-free sheaves on $Y$. We will mention only some works in this direction, and refer the readers to \cite{FMNT} for more details and a more comprehensive survey.

Since Bridgeland's introduction of stability conditions on triangulated categories \cite{StabTC}, there have been interests in understanding stable objects in the bounded derived category of coherent sheaves $D(X)$ of a variety $X$ and their moduli spaces. Using Fourier-Mukai transforms, it is possible to transform certain moduli problems for complexes on $X$ to moduli problems for sheaves on $Y$.  Recent related works along this direction include: Bernardara-Hein \cite{BH} and Hein-Ploog \cite{HP} for  elliptic K3 surfaces, Maciocia-Meachan \cite{MM} for rank-one Bridgeland stable complexes on Abelian surfaces,  Minamide-Yanagida-Yoshioka \cite{MYY,MYY2} for Bridgeland stable complexes on Abelian and K3 surfaces, the second author for K3 surfaces \cite{Lo4} and elliptic threefolds \cite{Lo5}.

\subsection{Overview of results}

In this paper, we consider elliptic fibrations $\pi : X \to S$ where the dimension of the base $S$ is at least two, together with a dual fibration $\widehat{\pi} : Y \to S$.  We generalise most of the results in \cite{Lo5}, where the dimension of the base was exactly two.  For many of the results in \cite{Lo5}, their proofs carry over to the higher dimensional case without any change; we restate these results in Section \ref{complexes}.  For some of the other results in \cite{Lo5}, however, we need to modify their proofs in major ways in order to prove them in higher dimensions.  The first such result is Theorem \ref{main1}, which roughly says that, if $F$ is a reflexive WIT$_1$ sheaf on an elliptic fibration, then it satisfies the vanishing condition
\[
\Ext_{D(X)}^1 (\mathcal B_X \cap W_{0,X}, F)=0.
\]

The threefold version of this theorem appeared as \cite[Theorem 2.19]{Lo5}.  To prove Theorem \ref{main1} for arbitrary base dimension, we need Lemma \ref{lemma13}, a result  on the codimensions of the sheaves $\EExt^p(E,A)$, for any reflexive sheaf $E$ and any coherent sheaf $A$  on $X$; this lemma is proved using a spectral sequence.  Theorem \ref{main1} allows us to identify the type of 2-term complexes $E$ that are mapped to torsion-free sheaves (in particular, we need $H^{-1}(E)$ to be torsion-free and reflexive).

Back in the case of elliptic threefolds in \cite{Lo5}, we considered complexes that were both $\sigma$-semistable and $\tilde{\sigma}$-semistable, where $\sigma$ was a polynomial stability of type `V2', and $\tilde{\sigma}$, being the dual stability of $\sigma$, was a polynomial stability of type `V3'.   In Section \ref{moduli1}, we consider polynomial stability conditions on higher dimensional varieties, particularly two classes which we call type W1 and type W2.  Stabilities of type W1 generalise the stabilities of type V2 on threefolds  from \cite{Lo3}, and include PT-stability (studied in \cite{Lo1, Lo2}); on the other hand, stabilities of type W2 generalise those of type V3 on threefolds from \cite{Lo3}.  We push  most of the results in \cite{Lo1,Lo2} to higher dimensions, including universal closedness for the moduli stack of PT-semistable objects, which is stated here as Theorem \ref{theorem2}.  Theorem \ref{theorem2} implies openness of semistability of type W1, which is stated as Corollary \ref{coro2}.  Having openness  allows us to speak of moduli stacks of polynomial semistable complexes.

%

In Section \ref{subsection-reflexive}, we study the condition of $H^{-1}(E)$ being torsion-free and reflexive when $E$ is a 2-term complex with cohomology sitting at degrees $-1$ and $0$.  We show that, when $\sigma$ is a polynomial stability of type W1, this condition is an open property for flat families of $\sigma$-semistable complexes.  As a consequence, we construct an open immersion from a moduli stack of polynomial stable complexes on $X$ to a moduli stack of stable sheaves on $Y$ in Theorem \ref{thm_a}.  And, as a byproduct of the machinery we develop in Section \ref{subsection-reflexive}, we show that  objects in the category $\mathfrak{D}$ described in \cite[Section 7.2]{BMTI} form moduli stacks, whether they are of types (a), (b) or (c).  In Theorem \ref{theorem5}, we show that a particular class of objects in $\mathfrak D$ of type (c) occur as the stable objects with respect to a polynomial stability.

In Theorem \ref{thm1} of Section \ref{moduli2}, we construct an equivalence of categories between a category $\mathcal{C}_X$ of 2-term complexes on $X$ and a category $\mathcal{C}_Y'$ of torsion-free sheaves on $Y$.  This theorem describes the objects in $D(X)$ and $D(Y)$ that we need to add in order to turn the aforementioned open immersion of stacks into an isomorphism of stacks.

Finally, in Section \ref{pure}, we consider torsion-free sheaves on $X$ that are taken to codimension-1 sheaves on $Y$ under Fourier-Mukai transformations.   Again, we generalise the threefold result \cite[Corollary 5.9]{Lo5}, so that we have an equivalence between the category of line bundles of fibre degree 0 on $X$, and the category of line bundles supported on sections of $\widehat{\pi}$.  These results resemble some of the results obtained using the spectral approach due to Friedman-Morgan-Witten, but do not make use of Fitting ideals.

We note that, although the conditions we impose on the Fourier-Mukai transforms we consider (properties (i) through (vi) in Section \ref{section-setup1}) may seem artificial, they are all satisfied by the elliptic threefolds considered in \cite[Section 9]{BMef}, and also by the Weierstrass fibrations of any dimension considered in \cite{FMNT}.

\medskip\medskip

{\bf Acknowledgements.} We would like to thank Jungkai Chen for raising a question leading to this paper. We also thank the referee for the thorough review and useful comments and suggestions. W.Y.C. was supported by NSC grant 101-2628-M-002-003-MY4 and a fellowship from the Kenda Foundation.  J.L.  Would like to thank the Taida Institute for Mathematical Sciences and the National Center for Theoretical Sciences (North) for the hospitality throughout his visits, during which part of this work was completed.

\section{Preliminaries and notation}

\subsection{Notation}

For a smooth projective variety $X$, we will always write $D(X)$ to denote its bounded derived category.  Given a t-structure on $D(X)$ with $\mathcal A$ as the heart, we will write $D^{\leq 0}_{\mathcal A} (X)$ (resp.\ $D^{\geq 0}_{\mathcal A}(X)$) to denote the subcategory of $D(X)$ consisting of complexes $E$ such that $\mathcal H^i_{\mathcal A}(E) = 0$ for all $i>0$ (resp.\ for all $i<0$), where $\mathcal H^i_{\mathcal A}(-)$ denotes the $i$-th cohomology functor with respect to the aforementioned t-structure. The cohomology functor $\mathcal H^i_{\Coh(X) }(-)$ with respect to the standard t-structure will be simply denoted by $H^i(-)$.

\subsection{The setup}\label{section-setup1}

Let us fix the following setting for the rest of the article.  We will assume that  $\pi : X \to S$ is a morphism satisfying:
 \begin{itemize}
 \item[(i)] $\pi$ is projective and flat;
 \item[(ii)] $X, S$ are smooth projective varieties;
  \end{itemize}
We will also assume that there exists another fibration $\widehat{\pi} : Y \to S$ (which might be isomorphic to $\pi$)  such that:
\begin{itemize}
\item[(iii)] the fibration $\widehat{\pi}$ also satisfies properties (i) and (ii);
\item[(iv)] $Y$ is a fine, relative moduli of stable sheaves on the fibres of $X$, while $X$ itself is also a fine, relative moduli of stable sheaves on the fibres of $Y$, and $\dimension X = \dimension Y$;
\item[(v)] the universal families from (iv) give us a pair of Fourier-Mukai transforms $\Psi : D(X) \to D(Y)$ and $\Phi : D(Y) \to D(X)$ such that $\Phi \Psi = \text{id}_{D(X)}[-1]$ and $\Psi \Phi = \text{id}_{D(Y)}[-1]$.
\end{itemize}

As in \cite{Lo5}, we introduce the following notations: we write $f$ to denote the Chern character of the structure sheaf of a smooth fibre of $\pi$, i.e.\ the `fibre class' of $\pi$.  Then for any object $E \in D(X)$, we define the fibre degree of $E$ to be
\[
d(E) = c_1(E)\cdot f,
 \]
 which is the degree of the restriction of $E$ to the generic fibre of $\pi$.  For the rest of this article, for any coherent sheaf $E$, we write $r(E)$ to denote its rank, and when $r(E)>0$, we  define
 \[
 \mu (E) = d(E)/r(E),
 \]
 which is the  slope of the restriction of  $E$ to the generic fibre.

We further assume:
\begin{itemize}
\item[(vi)] For any $E \in D(Y)$, we have
\begin{equation}\label{eqn-rdunderPhi}
 \begin{pmatrix} r(\Phi E) \\ d(\Phi E) \end{pmatrix} =
  \begin{pmatrix} c & a \\ d & b \end{pmatrix} \begin{pmatrix} r(E) \\ d(E) \end{pmatrix}
\end{equation}
for some element
 \[
  \begin{pmatrix} c & a \\ d & b \end{pmatrix} \in \text{SL}_2(\mathbb{Z})
 \]
where $a>0$.  Therefore, $Y$ is a relative moduli of stable sheaves of rank $a$ and degree $b$ on fibres of $\pi$.
\end{itemize}
As a result of assumption (vi), we also have, for any $E \in D(X)$,
\begin{equation}\label{eqn-rdunderPsi}
\begin{pmatrix} r(\Psi E) \\ d(\Psi E) \end{pmatrix} =
  \begin{pmatrix} -b & a \\ d & -c \end{pmatrix} \begin{pmatrix} r(E) \\ d(E) \end{pmatrix}.
\end{equation}
And hence (taking into account assumption (iv)) $X$ is a relative moduli of stable sheaves of rank $a$ and degree $-c$ on fibres of $\widehat{\pi}$.

\begin{remark}
The elliptic surfaces studied by Bridgeland \cite{FMTes} and the elliptic threefolds studied by Bridgeland-Maciocia \cite[Section 8]{BMef} all possess properties (i) through (vi) above.
\end{remark}

For any complex $E \in D(X)$, we write $\Psi^i(E)$ to denote  $H^i(\Psi (E))$, i.e.\ the cohomology of $\Psi (E)$ with respect to the standard t-structure on $D(Y)$; if $E$ is a sheaf sitting at degree 0, we have that $\Psi^i (E)=0$ unless $0\leq i \leq 1$, i.e.\ $\Psi (E) \in D^{[0,1]}_{\Coh (Y)}(Y)$.  The same statements hold for $\Phi$ and $Y$. A complex $E$ is called $\Psi$-WIT$_i$ if $\Psi(E)=\widehat{E}[-i]$ for some coherent sheaf $\widehat{E}$ on $Y$.

We also define the following full subcategories of $\Coh (X)$, all of which are extension-closed:
\begin{align*}
 \mathcal T_X &= \{ \text{torsion sheaves on } X \} \\
  \mathcal F_X &= \{ \text{torsion-free sheaves on }X \} \\
  W_{0,X} &= \{ \Psi\text{-WIT}_0 \text{ sheaves on }X \}  \\
  W_{1,X} &= \{ \Psi\text{-WIT}_1 \text{ sheaves on }X \} \\
  \mathcal B_X &= \{ E \in \Coh (X): r(E)=d(E)=0 \} \\
    \Coh (X)_{r>0} &= \{ E \in \Coh (X) : r(E)>0\}.
\end{align*}
And for any $s \in \mathbb{R}$, we define
\begin{align*}
  \Coh (X)_{\mu > s} &= \{ E \in \Coh (X)_{r>0}: \mu (E) >s \} \\
  \Coh (X)_{\mu=s} &= \{ E \in \Coh (X)_{r>0}: \mu (E) =s \} \\
  \Coh (X)_{\mu<s} &= \{ E \in \Coh (X)_{r>0}: \mu (E) <s \}.
\end{align*}
We  define the corresponding full subcategories of $\Coh (Y)$ similarly.

For any nonnegative integer $i \leq \dimension (X)$, we write $\Coh_{\leq i}(X)$ to denote the subcategory of $\Coh (X)$ consisting of coherent sheaves supported in dimension $i$ or lower, and write $\Coh_{\geq i}(X)$ to denote the subcategory of $\Coh (X)$ consisting of coherent sheaves without subsheaves supported in dimension at most $i-1$.  For integers $0 \leq d' < d \leq \dimension (X)$, the category $\Coh_{\leq d'}(X)$ is a Serre subcategory of $\Coh_{\leq d}(X)$, and so we can form the quotient category $\Coh_{d,d'}(X) := \Coh_{\leq d}(X)/\Coh_{\leq d'}(X)$.  For objects $F$ in $\Coh_{d,d'}(X)$, we write $p_{d,d'}(F)$ to denote the reduced Hilbert polynomial of $F$, modulo polynomials over $\mathbb{Q}$ of degree at most $d'-1$.

\section{COMPLEXES AND FOURIER-MUKAI TRANSFORMS}
\label{complexes}

In this section, we collect many technical results on Fourier-Mukai transforms between $D(X)$ and $D(Y)$, which will be used to relate moduli stacks on $X$ and $Y$.

All the lemmas and theorems in this section except Lemma \ref{lemma13} have appeared in \cite[Section 2.4]{Lo5} before, where they were proved for the case of $X$ being a threefold (i.e.\ when the base $S$ is of dimension two).   For Lemma \ref{lemma1} through Theorem \ref{thm0}, all their proofs in the threefold case in \cite{Lo5} generalise in a straightforward manner to higher dimensions, and so we refer the readers to \cite{Lo5} for their proofs.  Lemma \ref{lemma13} is the new technical result we need for higher dimensions; it is an integral part of the proof of Theorem \ref{main1}.

\begin{lemma}\cite[Lemma 2.2]{Lo5}.\label{lemma1} 
If we define
\[
\mathcal B^\circ_X := \{ E \in\Coh (X): \Hom (\mathcal B_X, E)=0\},
 \]
 then $(\mathcal B_X, \mathcal B_X^\circ)$ is a torsion pair in $\Coh (X)$.
\end{lemma}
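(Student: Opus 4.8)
The plan is to verify the two defining properties of a torsion pair for the category $\mathcal{B}_X$: namely, that $\Hom(\mathcal{B}_X, \mathcal{B}_X^\circ) = 0$ (which is immediate from the definition of $\mathcal{B}_X^\circ$), and that every coherent sheaf $E$ on $X$ fits into a short exact sequence
\[
0 \to T \to E \to E/T \to 0
\]
with $T \in \mathcal{B}_X$ and $E/T \in \mathcal{B}_X^\circ$. The first property holds by construction. For the decomposition property, the key observation is that $\mathcal{B}_X$, the category of sheaves with $r(E) = d(E) = 0$, is closed under extensions, quotients, and subobjects, and in fact is a Serre subcategory of $\Coh(X)$; it consists precisely of torsion sheaves whose support meets the generic fibre in dimension zero, or whose fibre-degree vanishes.

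First I would establish that $\mathcal{B}_X$ is closed under taking subobjects, quotients, and extensions, so that for any $E \in \Coh(X)$ there is a largest subsheaf $T$ belonging to $\mathcal{B}_X$. The existence of such a maximal $\mathcal{B}_X$-subsheaf follows from Noetherianity: among all subsheaves of $E$ lying in $\mathcal{B}_X$, their sum is again in $\mathcal{B}_X$ (being a quotient of a direct sum, hence an iterated extension of quotients), and the ascending chain condition guarantees a maximal such $T$. Then I would set $T$ to be this maximal subsheaf and show $E/T \in \mathcal{B}_X^\circ$, i.e.\ that $\Hom(\mathcal{B}_X, E/T) = 0$. For this, suppose $\phi : B \to E/T$ is a nonzero map with $B \in \mathcal{B}_X$; then $\image \phi \in \mathcal{B}_X$ (as a quotient of $B$), and its preimage $T'$ in $E$ under the surjection $E \twoheadrightarrow E/T$ fits in an extension $0 \to T \to T' \to \image\phi \to 0$, so $T' \in \mathcal{B}_X$ by closure under extensions, contradicting maximality of $T$ unless $\image \phi = 0$.

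The main technical point to verify is that $\mathcal{B}_X$ really is closed under the relevant operations, which reduces to checking that the pair of invariants $(r, d)$ behaves additively on short exact sequences and that vanishing of $(r, d)$ is inherited appropriately. Since $r(E) = \rank(E)$ and $d(E) = c_1(E) \cdot f$ are both additive on short exact sequences of coherent sheaves, closure under extensions is clear, and closure under subobjects and quotients follows from the fact that rank and fibre-degree are nonnegative for torsion-free quotients; one checks that a subsheaf or quotient of a sheaf with $r = d = 0$ again has $r = d = 0$. I expect the closure under quotients to be the most delicate part, since one must rule out the possibility that a quotient acquires positive fibre-degree while the rank stays zero — this uses that $d(E) = 0$ together with $r(E) = 0$ forces $E$ to restrict to a torsion sheaf of degree zero on the generic fibre, a property inherited by subquotients. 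Once closure is established, the construction of the maximal $\mathcal{B}_X$-subsheaf and the verification that the quotient lies in $\mathcal{B}_X^\circ$ are routine, and the torsion pair axioms follow.
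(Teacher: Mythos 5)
Your proof is correct and follows the standard argument that the cited reference \cite[Lemma 2.4]{Lo5} uses (the paper itself defers the proof there): show $\mathcal B_X$ is a Serre subcategory using additivity of $(r,d)$ and nonnegativity of the fibre degree on torsion sheaves, take the maximal $\mathcal B_X$-subsheaf via Noetherianity, and check the quotient lies in $\mathcal B_X^\circ$ by maximality.
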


\begin{lemma}\label{lemma2} 
\cite[Lemma 6.2]{FMTes}
Let $E$ be a sheaf of positive rank on $X$.  If $E$ is $\Psi$-WIT$_0$, then $\mu (E) \geq b/a$.  If $E$ is $\Psi$-WIT$_1$, then $\mu (E) \leq b/a$.
\end{lemma}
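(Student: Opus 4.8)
The plan is to derive both inequalities from a single positivity constraint, namely that a genuine coherent sheaf has nonnegative rank, combined with the transformation rule \eqref{eqn-rdunderPsi} for the pair $(r,d)$ under $\Psi$. The key preliminary observation is that for a sheaf $E$ the complex $\Psi(E)$ has cohomology concentrated in degrees $0$ and $1$, so in the Grothendieck group its class is $[\Psi^0(E)] - [\Psi^1(E)]$. Since $r$ is additive on exact triangles (it factors through $K$-theory), this gives $r(\Psi E) = r(\Psi^0 E) - r(\Psi^1 E)$, where each $\Psi^i(E)$ is an honest sheaf on $Y$ and hence satisfies $r(\Psi^i E) \geq 0$.

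First I would treat the $\Psi$-WIT$_0$ case. Here $\Psi^1(E) = 0$, so $\Psi(E) = \Psi^0(E)$ is a sheaf in degree $0$ and $r(\Psi E) = r(\Psi^0 E) \geq 0$. On the other hand, reading off the top row of \eqref{eqn-rdunderPsi} gives $r(\Psi E) = -b\, r(E) + a\, d(E)$, so $a\, d(E) \geq b\, r(E)$. Dividing by $a\, r(E) > 0$, which is legitimate since $a > 0$ by assumption (vii) and $r(E) > 0$ by hypothesis, yields $\mu(E) = d(E)/r(E) \geq b/a$.

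Next I would treat the $\Psi$-WIT$_1$ case in exactly parallel fashion. Now $\Psi^0(E) = 0$, so $\Psi(E) = \Psi^1(E)[-1]$ is a sheaf placed in degree $1$, and the shift contributes a sign: $r(\Psi E) = -r(\Psi^1 E) \leq 0$. Combining this with $r(\Psi E) = -b\, r(E) + a\, d(E)$ from \eqref{eqn-rdunderPsi} gives $a\, d(E) \leq b\, r(E)$, whence $\mu(E) \leq b/a$ after again dividing by $a\, r(E) > 0$.

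I do not expect a serious obstacle here; the argument is bookkeeping once one has the fact that $\Psi(E)$ has cohomology only in degrees $0$ and $1$ (recorded in the preliminaries) together with the additivity and transformation rule for the numerical invariants. The only points that need genuine care are the sign introduced by the shift $[-1]$ in the WIT$_1$ case, and checking that dividing by $r(E)$ preserves the direction of each inequality, which it does precisely because $r(E) > 0$ and $a > 0$. It is worth noting that nothing in this argument refers to $\dimension S$, which is why the proof of \cite[Lemma 6.2]{FMTes} applies unchanged in arbitrary base dimension.
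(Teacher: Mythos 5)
Your argument is correct and is essentially the standard proof of \cite[Lemma 6.2]{FMTes}, which the paper cites rather than reproduces: additivity of rank in the Grothendieck group gives $r(\Psi E)=r(\Psi^0E)-r(\Psi^1E)$, and the sign of $r(\Psi E)=-b\,r(E)+a\,d(E)$ is then pinned down by which cohomology vanishes. You also correctly note the only points needing care, namely the sign from the shift in the WIT$_1$ case and the positivity of $a$ and $r(E)$ when dividing.
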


\begin{lemma}\cite[Lemma 2.6]{Lo5}\label{lemma3} 
If  $T$ is a $\Psi$-WIT$_1$ torsion sheaf on $X$, then $T \in \mathcal B_X$.
\end{lemma}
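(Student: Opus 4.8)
The plan is as follows. Since $T$ is a torsion sheaf we have $r(T)=0$ for free, so the entire content of the statement is the vanishing of the fibre degree $d(T)=c_1(T)\cdot f$. I would establish this by proving the two inequalities $d(T)\geq 0$ and $d(T)\leq 0$ separately, the first from the geometry of the support of $T$ and the second from the $\Psi$-WIT$_1$ hypothesis.

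For the inequality $d(T)\geq 0$, I would argue using the fact that $f$ is the class of a fibre of $\pi$. The first Chern class of a torsion sheaf is an effective divisor class, namely $c_1(T)=\sum_i n_i[D_i]$ with $n_i>0$, where the $D_i$ are the codimension-one components of $\supp(T)$. Intersecting with $f$ amounts to restricting to a generic smooth fibre $F$: a horizontal component (one dominating $S$) meets $F$ in finitely many points and so contributes non-negatively, while a vertical component is disjoint from the generic fibre and contributes $0$. Hence $d(T)=\sum_i n_i\,(D_i\cdot f)\geq 0$.

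For the inequality $d(T)\leq 0$, I would use that $T$ is $\Psi$-WIT$_1$. Then $\Psi T\cong \Psi^1(T)[-1]$ with $\Psi^1(T)$ a genuine coherent sheaf on $Y$, so $r(\Psi^1(T))\geq 0$, while additivity of the Chern character over the shifted complex gives $r(\Psi T)=-r(\Psi^1(T))\leq 0$. On the other hand, feeding $r(T)=0$ into the transformation rule \eqref{eqn-rdunderPsi} yields $r(\Psi T)=a\,d(T)$. Since $a>0$ by assumption (vii), combining these gives $d(T)\leq 0$.

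Putting the two inequalities together forces $d(T)=0$, and together with $r(T)=0$ this places $T$ in $\mathcal B_X$. The step I expect to require the most care is the inequality $d(T)\geq 0$: one must justify that $f$ pairs non-negatively with every effective divisor class, which is where the geometry of the fibration (nefness of the fibre class and the horizontal/vertical dichotomy for the components of the support) really enters. By contrast, the $\Psi$-WIT$_1$ half is essentially sign-bookkeeping once the transformation rule \eqref{eqn-rdunderPsi} is in hand.
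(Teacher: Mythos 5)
Your proof is correct and follows essentially the same route as the paper's (which defers to \cite[Lemma 2.6]{Lo5}): since $r(T)=0$ automatically, one shows $d(T)\geq 0$ from the torsion hypothesis (the restriction of $T$ to a generic fibre is a torsion sheaf of non-negative degree, equivalently your effective-divisor argument) and $d(T)\leq 0$ from $r(\Psi T)=a\,d(T)=-r(\hat{T})\leq 0$ with $a>0$. No issues.
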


\begin{remark}\cite[Remark 2.7]{Lo5} 
\label{remark0}
Given any $E \in D^b(X)$, we have $r(\Psi E) = -b\cdot r(E)+a\cdot d(E)$.  So when $E$ has positive rank, $\mu (E)=b/a$ is equivalent to $r(\Psi E)=0$.  In other words, if $E$ is a $\Psi$-WIT$_1$ sheaf on $X$ of positive rank with $\mu (E)=b/a$, then $\widehat{E}$ is a torsion sheaf on $Y$.
\end{remark}

\begin{lemma}\cite[Lemma 2.8]{Lo5}\label{lemma4} 
Suppose $E$ is a $\Psi$-WIT$_0$ sheaf on $X$ and $r(E)>0$.  Then $\mu (E)>b/a$.
\end{lemma}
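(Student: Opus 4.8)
The plan is to upgrade the weak inequality $\mu(E) \geq b/a$ already supplied by Lemma \ref{lemma2} into the strict inequality by ruling out the boundary case. So I would argue by contradiction: assume $\mu(E) = b/a$ while $r(E) > 0$, and derive $r(E) = 0$.

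First, since $E$ is $\Psi$-WIT$_0$, its transform $\hat{E} = \Psi E$ is a genuine sheaf on $Y$ sitting in cohomological degree $0$. By Remark \ref{remark0}, for a sheaf of positive rank the equality $\mu(E) = b/a$ is equivalent to $r(\Psi E) = 0$; hence $\hat{E}$ is a torsion sheaf on $Y$. Next I would observe that $\hat{E}$ is $\Phi$-WIT$_1$: applying $\Phi$ and using $\Phi\Psi = \mathrm{id}_{D(X)}[-1]$ from assumption (vi) gives $\Phi\hat{E} = \Phi\Psi E = E[-1]$, a sheaf concentrated in degree $1$, which is exactly the $\Phi$-WIT$_1$ condition. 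Because the setup of Section \ref{section-setup1} is symmetric in $X$ and $Y$ with the roles of $\Psi$ and $\Phi$ interchanged, Lemma \ref{lemma3} applies verbatim on $Y$, so the torsion $\Phi$-WIT$_1$ sheaf $\hat{E}$ lies in $\mathcal B_Y$; in particular $d(\hat{E}) = 0$ as well.

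Finally I would compute $r(E)$ in two ways. On the one hand, $\Phi\hat{E} = E[-1]$, and the shift flips the sign of the rank, so $r(\Phi\hat{E}) = r(E[-1]) = -r(E)$. On the other hand, the transformation rule (vii) for $\Phi$ applied to the sheaf $\hat{E}$ gives $r(\Phi\hat{E}) = c\,r(\hat{E}) + a\,d(\hat{E}) = 0$, since $r(\hat{E}) = d(\hat{E}) = 0$. Comparing the two yields $r(E) = 0$, contradicting the hypothesis $r(E) > 0$; therefore the case $\mu(E) = b/a$ is impossible and $\mu(E) > b/a$.

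The argument is short and essentially assembles the earlier results: the conceptual core is that equality $\mu(E) = b/a$ would make $\hat{E}$ simultaneously rank-zero (by Remark \ref{remark0}) and a member of $\mathcal B_Y$ (via Lemma \ref{lemma3}), which cannot hold for the transform of a positive-rank sheaf. The only place demanding care is the bookkeeping in the last step: one must correctly track the sign introduced by the shift $[-1]$ when computing the rank, and be sure to invoke the rank matrix for $\Phi$ rather than for $\Psi$. I do not anticipate a genuine obstacle beyond this bookkeeping.
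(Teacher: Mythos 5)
Your proof is correct and is essentially the argument the paper relies on (it defers to \cite[Lemma 2.8]{Lo5}): use Remark \ref{remark0} to see that equality $\mu(E)=b/a$ forces $\hat{E}$ to be a torsion $\Phi$-WIT$_1$ sheaf, apply the $Y$-analogue of Lemma \ref{lemma3} to place $\hat{E}$ in $\mathcal B_Y$, and then the rank formula from assumption (vii) forces $r(E)=0$, a contradiction. The sign bookkeeping for the shift $[-1]$ is handled correctly.
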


Lemma \ref{lemma4} is slightly stronger than the second part of Lemma \ref{lemma2}.

\begin{lemma}\cite[Lemma 2.9]{Lo5}\label{lemma5} 
Suppose $T \in \mathcal B_X$.   Then $\Psi^0(T), \Psi^1(T)$ are both torsion sheaves and lie in $\mathcal B_Y$.
\end{lemma}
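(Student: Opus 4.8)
The plan is to reduce the whole statement to a condition on supports over the base $S$, using only that $\Psi$ is a relative Fourier–Mukai transform. First I would unwind the hypothesis $T \in \mathcal{B}_X$: it says exactly that $T$ is a torsion sheaf (because $r(T)=0$) whose fibre degree $d(T)$ vanishes. By definition $d(T)$ is the degree of the restriction of $T$ to the generic fibre $X_\eta = X \times_S \Spec K(S)$, which is an elliptic curve over $K(S)$. Since $r(T)=0$, this restriction $T|_{X_\eta}$ is a torsion coherent sheaf on the smooth curve $X_\eta$, so its degree equals its length and is therefore nonnegative, vanishing if and only if $T|_{X_\eta}=0$. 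Hence $d(T)=0$ forces $T|_{X_\eta}=0$, which is equivalent to saying that no component of $\supp (T)$ dominates $S$; that is, $T$ is supported over a proper closed subscheme $Z := \pi(\supp (T)) \subsetneq S$ (closed since $\pi$ is proper, and proper since it omits the generic point of $S$).

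Next I would transport this support condition across the transform. Because $\Psi$ is a relative Fourier–Mukai transform over $S$, its kernel being supported on the fibre product $X \times_S Y$, the entire complex $\Psi(T)$ is again supported over $Z$, and hence so is each cohomology sheaf $\Psi^0(T)$ and $\Psi^1(T)$. In particular $\supp \Psi^i(T) \subseteq \hat{\pi}^{-1}(Z)$, which is a proper closed subset of $Y$ because $\hat{\pi}$ is surjective and $Z \neq S$.

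Finally I would read off the conclusion on $Y$. A coherent sheaf on $Y$ supported over $Z \subsetneq S$ meets the generic fibre $Y_\eta$ in the empty set, so its restriction to $Y_\eta$ is zero; this says precisely that both its rank and its fibre degree vanish, i.e.\ that it lies in $\mathcal{B}_Y$ (and is in particular a torsion sheaf). Applying this for $i=0,1$ yields $\Psi^0(T), \Psi^1(T) \in \mathcal{B}_Y$, as required. As a consistency check, the additive relation \eqref{eqn-rdunderPsi} applied to $(r(T),d(T))=(0,0)$ gives $r(\Psi T)=d(\Psi T)=0$, compatible with each $\Psi^i(T)$ having vanishing rank and fibre degree. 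I expect the only genuine point requiring care to be the standard fact that a relative Fourier–Mukai transform preserves the locus over $S$ on which an object is supported; the identification of the fibre degree with the length of the generic-fibre restriction and the surjectivity of $\hat{\pi}$ are routine.
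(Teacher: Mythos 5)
Your proof is correct, and it follows what is essentially the only natural route here, which is also the one taken in the reference the paper defers to for this lemma ([Lo5, Lemma 2.9]): since $d(T)=c_1(T)\cdot f$ is a nonnegative sum of contributions from the components of $\supp(T)$ dominating $S$, the condition $r(T)=d(T)=0$ forces $\pi(\supp(T))=Z\subsetneq S$, and the relative nature of the kernel (supported on $X\times_S Y$ by assumption (v)) then places $\Psi^0(T)$ and $\Psi^1(T)$ over $\hat{\pi}^{-1}(Z)$, whence they lie in $\mathcal B_Y$. The one hypothesis you rightly flag --- that $\Psi$ is a relative integral functor over $S$ --- is indeed available from the setup and is used elsewhere in the paper (e.g.\ via the base-change results of [FMNT] in Section \ref{pure}), so there is no gap.
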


\begin{lemma}\cite[Lemma 2.11]{Lo5}\label{lemma6} 
 Let $E$ be a nonzero $\Psi$-WIT$_0$ sheaf of any rank on $X$ such that  $E \in \mathcal B_X^\circ$.  Then $\widehat{E}$ is a nonzero torsion-free sheaf.
\end{lemma}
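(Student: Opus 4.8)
The plan is to argue by contradiction, assuming the torsion subsheaf of $\hat{E}$ is nonzero and extracting from it a nonzero morphism from an object of $\mathcal B_X$ into $E$, which is forbidden by $E \in \mathcal B_X^\circ$. The nonvanishing of $\hat{E} = \Psi^0 E = \Psi E$ is immediate: $\Psi$ is an equivalence (with quasi-inverse $\Phi[1]$ by assumption (vi)) and $E \neq 0$. The first observation I would record is that, since $E$ is $\Psi$-WIT$_0$, we have $\Phi \hat{E} = \Phi \Psi E \cong E[-1]$; hence $\hat{E}$ is $\Phi$-WIT$_1$ with $\Phi^0 \hat{E} = 0$ and $\Phi^1 \hat{E} \cong E$.

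Next I would take the torsion subsheaf $T \subseteq \hat{E}$, with torsion-free quotient $Q$, and suppose $T \neq 0$. Applying $\Phi$ to $0 \to T \to \hat{E} \to Q \to 0$ and reading off the long exact sequence of $\Phi$-cohomology sheaves, the inclusion $\Phi^0 T \hookrightarrow \Phi^0 \hat{E} = 0$ shows $\Phi^0 T = 0$, so $T$ is a torsion $\Phi$-WIT$_1$ sheaf on $Y$. The $Y$-analogue of Lemma \ref{lemma3} then forces $T \in \mathcal B_Y$, and the $Y$-analogue of Lemma \ref{lemma5} gives $\Phi^1 T \in \mathcal B_X$.

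Finally I would transport the inclusion $T \hookrightarrow \hat{E}$ across the equivalence $\Phi[1]$. Using $\Phi T \cong \Phi^1 T[-1]$ (valid since $\Phi^0 T = 0$) and $(\Phi \hat{E})[1] \cong E$, the isomorphism $\Hom_{D(Y)}(T, \hat{E}) \cong \Hom_{D(X)}((\Phi T)[1], (\Phi \hat{E})[1])$ identifies the right-hand side with $\Hom_{D(X)}(\Phi^1 T, E) = \Hom_{\Coh(X)}(\Phi^1 T, E)$. Since the inclusion is a nonzero element on the left, I obtain a nonzero sheaf map $\Phi^1 T \to E$ with source in $\mathcal B_X$, contradicting $E \in \mathcal B_X^\circ$; therefore $T = 0$ and $\hat{E}$ is torsion-free. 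I expect the only delicate part to be the degree bookkeeping in this last identification---pinning down the shifts so that $(\Phi T)[1]$ really is the sheaf $\Phi^1 T$ placed in degree $0$---together with checking that Lemmas \ref{lemma3} and \ref{lemma5} hold verbatim with the roles of $X$ and $Y$ (and of $\Psi$ and $\Phi$) exchanged, which they do since the setup of Section \ref{section-setup1} is symmetric in $X$ and $Y$.
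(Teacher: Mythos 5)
Your argument is correct and is essentially the proof given in \cite[Lemma 2.10]{Lo5}, to which this paper defers: nonvanishing from $\Psi$ being an equivalence, $\Phi^0\hat{E}=0$ forcing the torsion subsheaf $T$ to be $\Phi$-WIT$_1$ and hence in $\mathcal B_Y$, and then $\Hom(T,\hat{E})\cong\Hom(\Phi^1 T,E)$ contradicting $E\in\mathcal B_X^\circ$. The shift bookkeeping and the $X\leftrightarrow Y$ symmetry you flag are exactly as you expect, so there is nothing to add.
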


\begin{lemma}\cite[Lemma 2.10]{Lo5}\label{lemma7} 
We have an equivalence of categories
\begin{gather}\label{equiv1}
\mathcal F_X \cap \{ E \in \Coh (X) : \Ext^1 (\mathcal B_X \cap W_{0,X}, E)=0\} \cap W_{1,X} \overset{\Psi [1]}{\to} \mathcal B_Y^\circ \cap W_{0,Y}.
\end{gather}
\end{lemma}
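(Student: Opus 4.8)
The plan is to establish the equivalence in \eqref{equiv1} by showing that the functor $\Psi[1]$ carries the source category into the target category, and then constructing an inverse using the Fourier-Mukai transform $\Phi$ from the other side. First I would take an object $E$ in the source category, so $E$ is a torsion-free sheaf that is $\Psi$-WIT$_1$ and satisfies $\Ext^1(\mathcal B_X \cap W_{0,X}, E)=0$. Since $E$ is $\Psi$-WIT$_1$, the transform $\Psi E$ is concentrated in degree $1$, so $\Psi[1](E) = \hat{E}$ is an honest sheaf on $Y$ sitting in degree $0$; this is the natural candidate target object. I would then need to verify two things about $\hat{E}$: that it is $\Phi$-WIT$_0$ (i.e.\ lies in $W_{0,Y}$), and that it lies in $\mathcal B_Y^\circ$, meaning $\Hom(\mathcal B_Y, \hat{E})=0$.

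The $\Phi$-WIT$_0$ property should follow directly from the inversion relation $\Phi\Psi = \text{id}_{D(X)}[-1]$ in assumption (vi): applying $\Phi$ to $\hat{E}[-1] = \Psi E$ recovers $E[-1]$, and the concentration of $\Phi\hat{E}$ in a single degree forces $\hat{E}$ to be $\Phi$-WIT$_0$. The more delicate part is showing $\hat{E} \in \mathcal B_Y^\circ$. Here I would exploit adjunction/the Fourier-Mukai intertwining to translate a nonzero map $B \to \hat{E}$ with $B \in \mathcal B_Y$ into data on $X$. Applying $\Phi$ and using Lemma \ref{lemma5} (which tells us that objects of $\mathcal B_Y$ transform to torsion sheaves in $\mathcal B_X$ under $\Phi$, by the symmetric statement), together with the hypothesis $\Ext^1(\mathcal B_X \cap W_{0,X}, E)=0$, the vanishing condition on $E$ should be exactly what is needed to kill such a map. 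The role of Lemma \ref{lemma6} is also relevant in the reverse direction: a $\Phi$-WIT$_0$ sheaf in $\mathcal B_Y^\circ$ transforms to a torsion-free sheaf, feeding the torsion-freeness requirement in the source.

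For the inverse functor, I would take $G \in \mathcal B_Y^\circ \cap W_{0,Y}$ and set $E := \Phi[1](G) = \hat{G}[?]$, carefully tracking degrees via $\Phi\Psi = \text{id}[-1]$ so that the composite with $\Psi[1]$ returns the identity. By Lemma \ref{lemma6} applied on $Y$, $\hat{G}$ is torsion-free, giving membership in $\mathcal F_X$; the $\Psi$-WIT$_1$ property of $\hat{G}$ follows from the single-degree concentration of its transform under the inversion identity; and the vanishing $\Ext^1(\mathcal B_X \cap W_{0,X}, \hat{G})=0$ must be checked by running the $\mathcal B_Y^\circ$ argument in reverse. Finally I would confirm that the two assigned functors are mutually inverse, which reduces to the relations $\Phi\Psi = \text{id}[-1]$ and $\Psi\Phi = \text{id}[-1]$ combined with the degree shifts built into $\Psi[1]$ and $\Phi[1]$.

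I expect the main obstacle to be the precise handling of the $\mathcal B^\circ$ condition under the transform, that is, showing that the vanishing $\Ext^1(\mathcal B_X \cap W_{0,X}, E)=0$ on the $X$ side corresponds exactly to the $\Hom(\mathcal B_Y, \hat{E})=0$ condition on the $Y$ side. This is where the cohomological bookkeeping is most subtle: a $\Hom$ into a sheaf must be related, via adjunction and the WIT concentration, to an $\Ext^1$ out of a $\mathcal B$-object, and controlling which WIT$_0$/WIT$_1$ components of a $\mathcal B_Y$-object contribute requires Lemma \ref{lemma5} to pin down that the relevant pieces land in $\mathcal B_X \cap W_{0,X}$ after transforming. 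Once this dictionary between the two side-conditions is set up cleanly, the rest of the equivalence should follow formally from the Fourier-Mukai inversion relations.
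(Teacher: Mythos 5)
Your proposal is correct and follows essentially the same route as the paper (which defers to \cite[Lemma 2.11]{Lo5} but explicitly flags Lemmas \ref{lemma8} and \ref{lemma9} as the needed ingredients): decompose the $\mathcal B_Y^\circ$ condition on $\hat E$ along the $W_{0,Y}/W_{1,Y}$ torsion pair, match the $W_{0,Y}$ half with torsion-freeness of $E$ and the $W_{1,Y}$ half with the vanishing $\Ext^1(\mathcal B_X\cap W_{0,X},E)=0$ via the Fourier--Mukai intertwining, and use the inversion relations for the WIT exchange and mutual inverseness. The ``dictionary'' you single out as the main obstacle is precisely the content of Lemmas \ref{lemma8} and \ref{lemma9}.
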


In order to prove Lemma \ref{lemma7}, we need the following Lemma \ref{lemma8} and Lemma \ref{lemma9}:

\begin{lemma}\cite[Lemma 2.11]{Lo5}\label{lemma8}  
Let $F$ be a $\Phi$-WIT$_0$ sheaf on $Y$.  Then $\widehat{F}$ is a torsion-free sheaf on $X$ if and only if $\Hom (\mathcal B_Y \cap W_{0,Y},F)=0$.
\end{lemma}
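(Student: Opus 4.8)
The statement to prove is Lemma~\ref{lemma8}: for a $\Phi$-WIT$_0$ sheaf $F$ on $Y$, its transform $\hat F = \Phi^0(F)$ on $X$ is torsion-free if and only if $\Hom(\mathcal B_Y \cap W_{0,Y}, F) = 0$.

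\medskip

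The plan is to exploit the equivalence of categories provided by the Fourier--Mukai pair. Since $F$ is $\Phi$-WIT$_0$, the object $\hat F$ is a genuine sheaf on $X$, and applying $\Psi$ recovers $F$ (up to the shift built into $\Psi\Phi = \text{id}[-1]$); in particular $\hat F$ is $\Psi$-WIT$_1$ with $\widehat{\hat F} \cong F$. The key reformulation is to relate the torsion part of $\hat F$ to a Hom-group on the $Y$-side. First I would suppose $\hat F$ is \emph{not} torsion-free and produce a nonzero torsion subsheaf $T \hookrightarrow \hat F$ in $\mathcal F_X^\circ$; I want to arrange that $T$ lies in $\mathcal B_X$. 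The natural move is to pass to the maximal torsion subsheaf, or better, to use Lemma~\ref{lemma3}: any $\Psi$-WIT$_1$ torsion sheaf lies in $\mathcal B_X$. Since $\hat F$ is $\Psi$-WIT$_1$ and $W_{1,X}$ is closed under subobjects, a torsion subsheaf $T \subseteq \hat F$ is again $\Psi$-WIT$_1$, hence $T \in \mathcal B_X$ by Lemma~\ref{lemma3}.

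\medskip

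With $T \in \mathcal B_X \cap W_{1,X}$ a subsheaf of $\hat F$, I would apply $\Psi$. By Lemma~\ref{lemma5}, $\Psi^0(T)$ and $\Psi^1(T)$ are torsion sheaves lying in $\mathcal B_Y$; and since $T$ is $\Psi$-WIT$_1$, we get $\Psi^1(T) = \hat T \in \mathcal B_Y \cap W_{0,Y}$ (its own transform being $T \neq 0$, it is nonzero). Now I transform the inclusion $T \hookrightarrow \hat F$ under $\Psi[1]$: applying the derived functor to the short exact sequence $0 \to T \to \hat F \to \hat F / T \to 0$ and taking the long exact cohomology sequence for $\Psi$, the injection $T \hookrightarrow \hat F$ of $\Psi$-WIT$_1$ sheaves yields, after applying $\Psi$, an injection $\hat T \hookrightarrow \widehat{\hat F} = F$ on the level of $\Psi^1$ (using that $\Psi^0$ of a $\Psi$-WIT$_1$ sheaf vanishes and that $\Psi^1$ is right-exact in the relevant sense). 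This produces a nonzero map $\hat T \to F$ with $\hat T \in \mathcal B_Y \cap W_{0,Y}$, i.e.\ $\Hom(\mathcal B_Y \cap W_{0,Y}, F) \neq 0$, proving the contrapositive of the ``if'' direction. Conversely, for the ``only if'' direction, given a nonzero $G \in \mathcal B_Y \cap W_{0,Y}$ with a nonzero map $G \to F$, I would transform it: both $G$ and $F$ are $\Phi$-WIT$_0$, so $\Phi^0$ is an equivalence on this subcategory and carries the nonzero map $G \to F$ to a nonzero map $\hat G \to \hat F$ on $X$; by Lemma~\ref{lemma5} applied on the $Y$-side, $\hat G = \Phi^0(G)$ is a torsion sheaf in $\mathcal B_X$, so its nonzero image in $\hat F$ exhibits torsion in $\hat F$, contradicting torsion-freeness.

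\medskip

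The main obstacle I anticipate is the bookkeeping of the cohomology functors $\Psi^i, \Phi^i$ when passing an inclusion of sheaves through the Fourier--Mukai transform: I must verify that the injection $T \hookrightarrow \hat F$ of $\Psi$-WIT$_1$ sheaves really induces an \emph{injection} $\hat T \hookrightarrow F$ rather than merely a nonzero map, and dually that the quotient $\hat F/T$ does not create a spurious $\Psi^0$ term that breaks exactness. The clean way to handle this is to note that the subcategory $W_{1,X}$ of $\Psi$-WIT$_1$ sheaves is abelian and $\Psi[1]$ restricts to an exact equivalence onto its image, so short exact sequences of $\Psi$-WIT$_1$ sheaves go to short exact sequences on $Y$; then injectivity is automatic. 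The one genuinely delicate point is ensuring the torsion subsheaf $T$ can be taken inside $W_{1,X}$, which is exactly where closure of $W_{1,X}$ under subobjects (a standard consequence of $\Psi$ being a Fourier--Mukai equivalence with the fibre-dimension-$1$ WIT property) is used.
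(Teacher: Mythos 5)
Your overall strategy matches the proof this paper defers to (\cite[Lemma 2.12]{Lo5}): a torsion subsheaf $T$ of $\hat F$ is $\Psi$-WIT$_1$ because $W_{1,X}$ is closed under subobjects, hence lies in $\mathcal B_X$ by Lemma \ref{lemma3}, and its transform $\hat T$ lies in $\mathcal B_Y \cap W_{0,Y}$ and maps nontrivially to $F$; conversely a nonzero map $G \to F$ with $G \in \mathcal B_Y \cap W_{0,Y}$ transforms to a nonzero map from the torsion sheaf $\hat G \in \mathcal B_X$ into $\hat F$. The converse direction as you wrote it is fine. The one genuine gap is the step where you claim the long exact sequence gives an \emph{injection} $\hat T \hookrightarrow F$. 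It does not: applying $\Psi$ to $0 \to T \to \hat F \to \hat F/T \to 0$ and using that $T$ and $\hat F$ are $\Psi$-WIT$_1$ yields
\[
0 \to \Psi^0(\hat F/T) \to \hat T \to F \to \Psi^1(\hat F/T) \to 0,
\]
so the kernel of $\hat T \to F$ is $\Psi^0(\hat F/T)$, and $\hat F/T$ is only known to be torsion-free, not $\Psi$-WIT$_1$, so this kernel need not vanish. Your proposed repair does not close this either: the sequence is not a sequence in $W_{1,X}$ (its third term may leave that subcategory), and $W_{1,X}$, being closed under subobjects and extensions but not quotients, is a torsion-free class rather than an abelian subcategory.

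Fortunately you only need the map $\hat T \to F$ to be \emph{nonzero}, and that follows from full faithfulness rather than from the long exact sequence: since $T$ and $\hat F$ are $\Psi$-WIT$_1$ with $\Psi T = \hat T[-1]$ and $\Psi \hat F = F[-1]$, the equivalence gives
\[
\Hom_X(T,\hat F) \;\cong\; \Hom_{D(Y)}(\Psi T, \Psi \hat F) \;\cong\; \Hom_Y(\hat T, F),
\]
so the nonzero inclusion $T \hookrightarrow \hat F$ produces a nonzero element of $\Hom_Y(\hat T,F)$ with $\hat T \in \mathcal B_Y \cap W_{0,Y}$ ($\hat T \neq 0$ because $T \neq 0$ and $\Psi$ is an equivalence). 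This is exactly the mechanism the paper uses in the proof of Theorem \ref{thm_a}. With that substitution your argument is complete and agrees with the intended proof.
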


\begin{lemma}\cite[Lemma 2.12]{Lo5}\label{lemma9}  
Let $F$ be a $\Phi$-WIT$_0$ sheaf on $Y$.  Then
\[
  \Hom (\mathcal B_Y \cap W_{1,Y}, F) \cong \Ext^1 (\mathcal B_X \cap W_{0,X},\widehat{F}).
\]
\end{lemma}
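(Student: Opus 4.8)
The plan is to exploit the fact that, by assumption (vi), $\Phi$ is an equivalence of triangulated categories (with quasi-inverse $\Psi[1]$), so it induces isomorphisms $\Hom_{D(Y)}(G,F)\cong\Hom_{D(X)}(\Phi G,\Phi F)$ for all objects $G,F$. The strategy is to apply this with the given $F$ and a general object $G \in \mathcal B_Y \cap W_{1,Y}$, and then translate the WIT conditions on $G$ and $F$ into cohomological shifts so as to read off an $\Ext^1$ on $X$.

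First I would fix $G \in \mathcal B_Y \cap W_{1,Y}$ and set $\hat G := \Phi^1(G)$. Since $G$ is $\Phi$-WIT$_1$ we have $\Phi G \cong \hat G[-1]$, and since $F$ is $\Phi$-WIT$_0$ we have $\Phi F = \hat F$, a sheaf in degree $0$. Feeding these into the equivalence gives
\[
\Hom_{D(Y)}(G,F) \cong \Hom_{D(X)}(\hat G[-1], \hat F) \cong \Hom_{D(X)}(\hat G, \hat F[1]) = \Ext^1_{D(X)}(\hat G, \hat F),
\]
which is exactly the desired form once we verify that $\hat G$ ranges over $\mathcal B_X \cap W_{0,X}$.

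Next I would check that $\hat G \in \mathcal B_X \cap W_{0,X}$ and that $G \mapsto \hat G$ is a bijection onto this subcategory. For membership in $\mathcal B_X$, I apply \eqref{eqn-rdunderPhi} to $G$: since $r(G)=d(G)=0$ we get $r(\Phi G)=d(\Phi G)=0$, and because $\Phi G \cong \hat G[-1]$ the shift forces $r(\hat G)=d(\hat G)=0$. For the WIT$_0$ condition, I apply $\Psi$ to $\Phi G \cong \hat G[-1]$ and use $\Psi\Phi = \text{id}_{D(Y)}[-1]$ to obtain $\Psi(\hat G) \cong G$, a sheaf in degree $0$; hence $\hat G$ is $\Psi$-WIT$_0$, i.e.\ $\hat G \in W_{0,X}$. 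The inverse correspondence sends $H \in \mathcal B_X \cap W_{0,X}$ to $\Psi^0(H)$: the relation $\Phi\Psi = \text{id}_{D(X)}[-1]$ shows $\Psi^0(H)$ is $\Phi$-WIT$_1$ with $\Phi^1(\Psi^0(H)) = H$, while Lemma \ref{lemma5}, applied to the $\Psi$-WIT$_0$ sheaf $H \in \mathcal B_X$, guarantees $\Psi^0(H) \in \mathcal B_Y$. Transporting the isomorphism above through this bijection yields the stated identification.

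The computation itself is short; the main point requiring care is the bookkeeping around the shift conventions — in particular that passing an object through $\Phi$ negates its rank and fibre degree because of the $[-1]$ in $\Phi G \cong \hat G[-1]$, and that the two WIT conditions (one on $G$, one on $F$) combine to produce precisely an $\Ext^1$ rather than a $\Hom$ or a higher $\Ext$. The only other place demanding attention is confirming that $G \leftrightarrow \hat G$ is a genuine bijection between $\mathcal B_Y \cap W_{1,Y}$ and $\mathcal B_X \cap W_{0,X}$, so that the isomorphism of groups is compatible as the objects vary.
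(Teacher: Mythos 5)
Your proof is correct and follows essentially the same route as the paper's source ([Lo5, Lemma 2.13]): apply the equivalence $\Phi$ to $\Hom(G,F)$, use the WIT$_1$ and WIT$_0$ conditions to convert the shift into an $\Ext^1$, and check via \eqref{eqn-rdunderPhi}, the relations $\Phi\Psi=\mathrm{id}[-1]$, $\Psi\Phi=\mathrm{id}[-1]$, and Lemma \ref{lemma5} that $G\mapsto\hat G$ gives a bijection between $\mathcal B_Y\cap W_{1,Y}$ and $\mathcal B_X\cap W_{0,X}$. No gaps.
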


\begin{lemma} \label{lemma10} 
\cite[Lemma 6.4]{FMTes}
Let $E$ be a torsion-free sheaf on $X$ such that the restriction of $E$ to the general fibre of $\pi$ is stable.  Suppose $\mu (E) < b/a$.  Then $E$ is $\Psi$-WIT$_1$.
\end{lemma}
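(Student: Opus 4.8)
The plan is to show directly that $\Psi^0(E) := H^0(\Psi E) = 0$, which is exactly the assertion that $E$ is $\Psi$-WIT$_1$. The argument has three ingredients: a fibrewise computation showing that $\Psi^0(E)$ can only be supported over a proper closed subset of the base $S$; an application of the inverse transform $\Phi$ to the canonical triangle associated to $\Psi(E)$, which exhibits (part of) $\Psi^0(E)$ as a subsheaf of $E$; and finally the torsion-freeness of $E$, which forces that subsheaf, and hence $\Psi^0(E)$, to vanish. Since the only place where the base dimension enters is a codimension count, this is precisely why Bridgeland's argument \cite[Lemma 6.4]{FMTes} carries over verbatim to arbitrary $\dimension S$.

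First I would localise over the generic point of $S$. The restriction of $E$ to the generic fibre $X_\eta$, an elliptic curve over the function field of $S$, is stable of slope $\mu(E) < b/a$ by hypothesis, and the classical Fourier--Mukai transform on an elliptic curve sends a stable sheaf of slope $< b/a$ to a WIT$_1$ object (this is the fibrewise base case, in the same spirit as Lemma \ref{lemma2}). By cohomology and base change, $\Psi^0(E)$ then vanishes over a dense open subset $U \subseteq S$; setting $Z := S \setminus U$, we conclude that $\supp \Psi^0(E) \subseteq \hat{\pi}^{-1}(Z)$ with $Z \subsetneq S$ a proper closed subset.

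Next I would apply $\Phi$ to the truncation triangle $\Psi^0(E) \to \Psi(E) \to \Psi^1(E)[-1] \to \Psi^0(E)[1]$ and use $\Phi\Psi = \text{id}_{D(X)}[-1]$, so that $\Phi\Psi(E) = E[-1]$. Since $\Phi$ and $\Psi$ each take a sheaf to a complex with cohomology only in degrees $0$ and $1$, the resulting long exact cohomology sequence collapses to the statements $\Phi^0(\Psi^0 E) = 0$ together with a short exact sequence
\[
0 \to \Phi^1(\Psi^0 E) \to E \to \Phi^0(\Psi^1 E) \to 0 .
\]
In particular $\Phi^1(\Psi^0 E)$ is a subsheaf of $E$. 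Because $\Phi$ is a relative transform, $\Phi^1(\Psi^0 E)$ is again supported over $Z$, so $\supp \Phi^1(\Psi^0 E) \subseteq \pi^{-1}(Z)$, which has dimension at most $\dimension Z + 1 \leq \dimension S = \dimension X - 1$. Hence $\Phi^1(\Psi^0 E)$ is a torsion sheaf; being a subsheaf of the torsion-free sheaf $E$, it must vanish. Combined with $\Phi^0(\Psi^0 E) = 0$, this gives $\Phi(\Psi^0 E) = 0$, and since $\Phi$ is an equivalence we conclude $\Psi^0(E) = 0$, as desired.

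The step I expect to be the main obstacle is the first one: making the passage from the generic-fibre WIT$_1$ statement to the global support bound $\supp \Psi^0(E) \subseteq \hat{\pi}^{-1}(Z)$ fully rigorous. One must invoke cohomology and base change (or a generic flatness argument for $E$ over $S$) carefully, and one must be sure the fibrewise transform on a smooth elliptic curve does send stable sheaves of slope $< b/a$ to WIT$_1$ sheaves. Everything after that is formal, and the only role of the higher base dimension is the harmless codimension estimate $\dimension \pi^{-1}(Z) < \dimension X$.
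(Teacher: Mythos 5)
Your proof is correct and is essentially the argument of Bridgeland's \cite[Lemma 6.4]{FMTes}, which the paper invokes by citation rather than reproving: restrict to the generic fibre to bound the support of $\Psi^0(E)$ over a proper closed $Z\subseteq S$, apply $\Phi$ to get $\Phi^0(\Psi^0E)=0$ and the exact sequence $0\to\Phi^1(\Psi^0E)\to E\to\Phi^0(\Psi^1E)\to 0$, and kill the torsion subsheaf $\Phi^1(\Psi^0E)$ using torsion-freeness of $E$. Your closing remark is also the right one — the only place the base dimension enters is the codimension count $\dimension\pi^{-1}(Z)<\dimension X$, which is why the lemma holds verbatim for arbitrary $\dimension S$.
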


\begin{lemma}\cite[Lemma 2.14]{Lo5}\label{lemma11} 
The functor $\Psi [1]$  restricts to an equivalence of categories
\begin{gather}\label{equiv2}
W_{1,X} \cap \Coh (X)_{r >0} \cap \Coh (X)_{\mu < b/a} \overset{\Psi [1]}{\to} W_{0,Y} \cap \Coh (Y)_{r>0} \cap \Coh (Y)_{\mu > -c/a}.
\end{gather}
\end{lemma}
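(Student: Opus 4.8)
The plan is to split the statement into two essentially independent ingredients: the standard fact that $\Psi[1]$ and $\Phi$ are mutually quasi-inverse equivalences between the \emph{whole} WIT categories $W_{1,X}$ and $W_{0,Y}$, and a purely numerical check that these functors respect the additional rank and slope constraints cutting out the subcategories in \eqref{equiv2}. Given the first ingredient, it suffices to verify that each functor carries the displayed subcategory on its side \emph{into} the target subcategory; the restrictions are then automatically inverse equivalences. First I would recall the first ingredient. If $E \in W_{1,X}$, then $\Psi E = \hat{E}[-1]$ with $\hat{E} := \Psi^1(E)$ a sheaf, so $\Psi[1](E) = \hat{E}$. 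Applying $\Phi$ and using $\Phi\Psi \cong \text{id}_{D(X)}[-1]$ gives $\Phi(\hat{E}) = E$, a sheaf concentrated in degree $0$; hence $\hat{E}$ is $\Phi$-WIT$_0$, i.e.\ $\hat{E} \in W_{0,Y}$. Symmetrically, $\Psi\Phi \cong \text{id}_{D(Y)}[-1]$ shows $\Phi$ sends $W_{0,Y}$ into $W_{1,X}$ and is quasi-inverse to $\Psi[1]$.

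Next I would run the numerics in the forward direction. Since $\hat{E} = \Psi^1(E)$ is a sheaf and $\Psi E = \hat{E}[-1]$ reverses the sign of the pair $(r,d)$ in the Grothendieck group, formula \eqref{eqn-rdunderPsi} gives $r(\hat{E}) = b\,r(E) - a\,d(E)$ and $d(\hat{E}) = -d\,r(E) + c\,d(E)$. The hypothesis $\mu(E) < b/a$ together with $r(E) > 0$ and $a > 0$ yields $b\,r(E) - a\,d(E) > 0$, so $r(\hat{E}) > 0$. For the slope I would compute, using that the matrix in \eqref{eqn-rdunderPhi} has determinant $bc - ad = 1$,
\[
\mu(\hat{E}) + \tfrac{c}{a} = \frac{(bc-ad)\,r(E)}{a\,r(\hat{E})} = \frac{r(E)}{a\,r(\hat{E})} > 0,
\]
so $\mu(\hat{E}) > -c/a$, placing $\hat{E}$ in the right-hand category.

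The reverse containment is the mirror computation: for $F \in W_{0,Y}$ with $r(F) > 0$ and $\mu(F) > -c/a$, the sheaf $\Phi F$ satisfies, by \eqref{eqn-rdunderPhi}, $r(\Phi F) = c\,r(F) + a\,d(F) > 0$, and again by $bc - ad = 1$,
\[
\tfrac{b}{a} - \mu(\Phi F) = \frac{(bc-ad)\,r(F)}{a\,r(\Phi F)} = \frac{r(F)}{a\,r(\Phi F)} > 0,
\]
so $\Phi F \in W_{1,X} \cap \Coh(X)_{r>0} \cap \Coh(X)_{\mu < b/a}$. Combining both containments with the quasi-inverse relation from the first paragraph yields the asserted equivalence. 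The only point requiring care is bookkeeping rather than a genuine obstacle: one must keep the sign conventions straight, remembering that WIT$_1$ places the transform in degree $1$ so that passing to the sheaf $\hat{E}$ via $\Psi[1]$ negates the invariants coming out of \eqref{eqn-rdunderPsi}, and one must check \emph{both} containments, since a restriction of an equivalence need not land in the intended subcategory a priori. It is precisely the $\text{SL}_2(\ZZ)$ relation $bc - ad = 1$ that makes the two slope inequalities match up exactly.
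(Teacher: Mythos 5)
Your proof is correct and follows essentially the same route as the paper's (which defers to \cite[Lemma 2.15]{Lo5}): one combines the standard fact that $\Psi[1]$ and $\Phi$ are mutually quasi-inverse between $W_{1,X}$ and $W_{0,Y}$ with the $\mathrm{SL}_2(\ZZ)$ bookkeeping from \eqref{eqn-rdunderPhi} and \eqref{eqn-rdunderPsi} to see that the rank and slope constraints are exchanged exactly. Both the sign convention for the shift and the determinant identity $bc-ad=1$ are handled correctly.
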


\begin{lemma}\cite[Lemma 2.15]{Lo5}\label{lemma12} 
The functor $\Psi [1]$ restricts to an equivalence of categories
\begin{gather}\label{equiv3}
  W_{1,X} \cap \Coh (X)_{r>0} \cap \Coh (X)_{\mu = b/a} \overset{\Psi [1]}{\to} W_{0,Y} \cap (\mathcal T_Y \setminus \mathcal B_Y).
\end{gather}
\end{lemma}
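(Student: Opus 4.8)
The plan is to establish the equivalence in \eqref{equiv3} by combining the fully-faithfulness we already have in hand from the Fourier-Mukai machinery with a precise identification of the image and preimage categories. First I would recall that $\Psi$ is a Fourier-Mukai transform with $\Phi\Psi = \text{id}_{D(X)}[-1]$, so on the subcategory of $\Psi$-WIT$_1$ sheaves the functor $\Psi[1]$ sends a sheaf $E$ to its transform $\hat E = \Psi^1(E)$, placed in degree $0$, and $\Phi[1]$ provides a quasi-inverse on the appropriate subcategory of $\Phi$-WIT$_0$ sheaves on $Y$. Thus the content of the lemma is entirely in checking that the source and target subcategories correspond to each other under $\Psi[1]$; the functor being an equivalence of the ambient WIT categories is automatic.

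The key steps, in order, are as follows. Take $E$ in the source category: $E$ is $\Psi$-WIT$_1$, has $r(E)>0$, and $\mu(E)=b/a$. By Remark \ref{remark0}, the condition $\mu(E)=b/a$ together with $\Psi$-WIT$_1$ and positive rank forces $\hat E = \Psi^1(E)$ to be a torsion sheaf on $Y$, so $\hat E \in \mathcal T_Y$; and since $\Psi^1$ applied to a nonzero WIT$_1$ sheaf is nonzero, $\hat E$ is also $\Phi$-WIT$_0$, which is exactly $W_{0,Y}$. It remains to verify $\hat E \notin \mathcal B_Y$. Here I would argue by contradiction: if $\hat E \in \mathcal B_Y$, then $\hat E$ is a $\Phi$-WIT$_0$ sheaf lying in $\mathcal B_Y$, and applying $\Phi$ (equivalently, using that $\Phi$ restricted to $\mathcal B_Y$ lands in $\mathcal B_X$ by the analogue of Lemma \ref{lemma5} on $Y$) would force $E$ to have $r(E)=d(E)=0$, contradicting $r(E)>0$. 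Conversely, for the reverse inclusion, take a sheaf $G \in W_{0,Y}\cap(\mathcal T_Y\setminus \mathcal B_Y)$ and set $E = \widehat G = \Phi^1(G)$ shifted to degree $0$; one checks $E$ is $\Psi$-WIT$_1$, and that $G$ being torsion with $G\notin\mathcal B_Y$ translates, via the transpose rank-degree relation \eqref{eqn-rdunderPsi}, into $r(E)>0$ and $\mu(E)=b/a$.

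The main obstacle I expect is the precise bookkeeping of the rank-degree transformation on the transformed sheaves, and in particular the passage between torsion conditions on $Y$ and the equalities $r(E)>0$, $\mu(E)=b/a$ on $X$. The subtlety is that $G\in\mathcal T_Y$ means $r(G)=0$, but not $d(G)=0$; the extra condition $G\notin\mathcal B_Y$ says precisely that $d(G)\neq 0$. Feeding $r(G)=0$, $d(G)\neq 0$ into the matrix in \eqref{eqn-rdunderPsi} (or its inverse, applied to $\Phi$) must be shown to yield $r(E) = a\cdot d(G) > 0$ after fixing signs, and then $\mu(E)=b/a$ follows from the WIT$_1$, positive-rank characterization in Remark \ref{remark0}. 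Getting these signs and the identification $r(E)>0$ correct, while simultaneously confirming that no sheaf slips into $\mathcal B$ on either side, is the delicate part; everything else reduces to the equivalence of WIT subcategories furnished by the Fourier-Mukai inversion $\Phi\Psi=\text{id}[-1]$.
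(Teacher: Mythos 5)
Your proposal is correct and is essentially the intended argument: the paper does not reprove Lemma \ref{lemma12} but defers to \cite[Lemma 2.16]{Lo5}, whose proof is exactly this combination of the automatic equivalence $W_{1,X}\to W_{0,Y}$ furnished by $\Phi\Psi=\mathrm{id}[-1]$ with the rank--degree bookkeeping from \eqref{eqn-rdunderPhi} and \eqref{eqn-rdunderPsi}. The sign issue you flag does close up cleanly: for $E$ in the source, $d(\hat E)=r(E)/a>0$ because the matrix has determinant $1$, so $\hat E\notin\mathcal B_Y$; conversely a torsion sheaf $G$ has $d(G)\geq 0$ (effectivity of $c_1$ against the fibre class), so $G\notin\mathcal B_Y$ forces $d(G)>0$ and hence $r(\hat G)=a\,d(G)>0$ with $\mu(\hat G)=b/a$.
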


\begin{theorem}\cite[Theorem 2.17]{Lo5}\label{thm0} 
Suppose $F$ is a coherent sheaf on $X$ such that $F$ is torsion-free, $\Psi$-WIT$_1$
and $\widehat{F}$ restricts to a torsion-free sheaf on the generic fibre of $\widehat{\pi}$.
Then $\widehat{F}$ is a torsion-free sheaf if and only if
\begin{equation}\label{eq1}
\Ext^1 (\mathcal B_X \cap W_{0,X}, F)=0.
\end{equation}
\end{theorem}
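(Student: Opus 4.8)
The plan is to transport the problem from $X$ to $Y$ via the Fourier--Mukai machinery and then reduce the torsion-freeness of $\hat F$ to a statement about its torsion subsheaf. First I would record the structural fact that $\hat F$ is $\Phi$-WIT$_0$: since $F$ is $\Psi$-WIT$_1$ we have $\Psi(F)=\hat F[-1]$, and applying $\Phi$ together with $\Phi\Psi=\text{id}_{D(X)}[-1]$ gives $\Phi(\hat F)=F$, which is a sheaf sitting in degree $0$; hence $\hat F$ is $\Phi$-WIT$_0$ with $\Phi^0(\hat F)=F$. This legitimises applying the $Y$-versions of the earlier lemmas, with the roles of $X,Y$ and $\Psi,\Phi$ interchanged.

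Next I would invoke Lemma \ref{lemma9}, applied to the $\Phi$-WIT$_0$ sheaf $\hat F$, to obtain the key identification
\[
\Ext^1(\mathcal B_X \cap W_{0,X}, F)\;\cong\;\Hom(\mathcal B_Y \cap W_{1,Y}, \hat F).
\]
Thus the vanishing condition \eqref{eq1} is equivalent to $\Hom(\mathcal B_Y \cap W_{1,Y}, \hat F)=0$, and the theorem reduces to showing that $\hat F$ is torsion-free if and only if this $\Hom$ group vanishes.

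One direction is immediate: if $\hat F$ is torsion-free then any morphism into it from a torsion sheaf is zero, and every object of $\mathcal B_Y$ is torsion (rank $0$), so a fortiori $\Hom(\mathcal B_Y \cap W_{1,Y},\hat F)=0$. For the converse I would analyse the torsion subsheaf $T\subseteq \hat F$ and aim to show $T\in \mathcal B_Y \cap W_{1,Y}$; granting this, the inclusion $T\hookrightarrow \hat F$ is an element of the vanishing $\Hom$ group, forcing $T=0$, so $\hat F$ is torsion-free. To see $T\in\mathcal B_Y$: we have $r(T)=0$ since $T$ is torsion, while the hypothesis that $\hat F$ restricts to a torsion-free sheaf on the generic fibre of $\hat\pi$ forces $T$ to be supported over a proper closed subset of $S$, so that $T$ is vertical and $d(T)=c_1(T)\cdot f=0$. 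To see that $T$ is $\Phi$-WIT$_1$: apply $\Phi$ to $0\to T\to \hat F\to Q\to 0$; since $\hat F$ is $\Phi$-WIT$_0$ the long exact sequence yields an injection $\Phi^0(T)\hookrightarrow \Phi^0(\hat F)=F$, and by the $Y$-analogue of Lemma \ref{lemma5} the sheaf $\Phi^0(T)$ is torsion, so it injects into the torsion-free sheaf $F$ and must vanish; hence $\Phi^0(T)=0$, i.e.\ $T$ is $\Phi$-WIT$_1$.

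The main obstacle is precisely this verification that $T\in \mathcal B_Y\cap W_{1,Y}$: the $\mathcal B_Y$-membership depends on correctly extracting that the torsion of $\hat F$ is \emph{vertical} from the generic-fibre hypothesis, and the WIT$_1$-membership rests on the interplay between the $\Phi$-WIT$_0$ property of $\hat F$, Lemma \ref{lemma5}, and the torsion-freeness of $F$. Once these are in place, the Fourier--Mukai identification supplied by Lemma \ref{lemma9} does all the conceptual work, converting the $\Ext^1$-vanishing on $X$ into the transparent $\Hom$-vanishing on $Y$.
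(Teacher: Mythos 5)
Your proof is correct and follows essentially the same route as the paper's (which is deferred to \cite[Theorem 2.18]{Lo5}): the key identification $\Ext^1(\mathcal B_X \cap W_{0,X}, F)\cong \Hom(\mathcal B_Y\cap W_{1,Y},\hat F)$ via Lemma \ref{lemma9}, followed by showing that the torsion subsheaf $T\subseteq\hat F$ lies in $\mathcal B_Y$ (using the generic-fibre hypothesis) and is $\Phi$-WIT$_1$ (using the torsion-freeness of $F=\Phi^0(\hat F)$). All the individual steps check out, so nothing further is needed.
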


\begin{lemma}\label{lemma13}
Let $X$ be a smooth projective variety, $E$ a reflexive sheaf on $X$, and $A$ any coherent sheaf on $X$.  Then $\codimension \EExt^q (E,A) \geq q+2$ for all $q >0$.
\end{lemma}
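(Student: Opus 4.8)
The plan is to first settle the case $A = \OO_X$ by a depth computation, and then bootstrap to an arbitrary coherent sheaf $A$ via a single spectral sequence.

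\textbf{Base case $A=\OO_X$.} Since $X$ is smooth, reflexivity of $E$ is equivalent to Serre's condition $S_2$: at every point $x\in X$ one has $\textnormal{depth}_{\OO_{X,x}}E_x \ge \min(2,\dimension\OO_{X,x})$. I would combine this with the Auslander--Buchsbaum formula. Fix $p\ge 1$ and a point $x$ in the support of $\EExt^p(E,\OO_X)$; writing $R=\OO_{X,x}$ and $c=\dimension R$, the nonvanishing $\Ext^p_R(E_x,R)\ne 0$ forces $p\le \textnormal{pd}_R E_x = c-\textnormal{depth}_R E_x$, i.e.\ $\textnormal{depth}_R E_x \le c-p$. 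The $S_2$ bound then gives $\min(2,c)\le c-p$, which for $p\ge 1$ is impossible unless $c\ge p+2$. Hence every point of the support has codimension $\ge p+2$, so $\codimension\EExt^p(E,\OO_X)\ge p+2$ for all $p\ge 1$.

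\textbf{Spectral sequence for general $A$.} Because $X$ is smooth, $E$ is perfect, so it admits a finite locally free resolution $P_\bullet\to E$. Then $R\HHom(E,A)$ is computed by $P_\bullet^\vee\otimes A$, and since each $P_i^\vee$ is locally free one has $P_\bullet^\vee\otimes A \simeq R\HHom(E,\OO_X)\Lo A$. The associated hyper-$\mathcal{T}or$ spectral sequence reads
\[
E_2^{p,-j}=\mathcal{T}or_j\bigl(\EExt^p(E,\OO_X),A\bigr)\ \Longrightarrow\ \EExt^{p-j}(E,A).
\]
Fixing $q>0$, the terms contributing to $\EExt^q(E,A)$ satisfy $p-j=q$ with $j\ge 0$, so $p=q+j\ge q\ge 1$. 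Each such term is supported inside $\supp\EExt^p(E,\OO_X)$, since tensoring never enlarges the support, and that support has codimension $\ge p+2\ge q+2$ by the base case. As $E_\infty$ is a subquotient of $E_2$ and $\EExt^q(E,A)$ is filtered by the surviving $E_\infty^{p,-j}$ with $p-j=q$, its support lies in the union of theirs, yielding $\codimension\EExt^q(E,A)\ge q+2$.

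The main obstacle is the base case: one must correctly invoke the equivalence \emph{reflexive} $\Leftrightarrow$ $S_2$ (standard over regular rings via Auslander--Bridger) and couple it to the codimension estimate through Auslander--Buchsbaum. Once this is in hand the spectral sequence step is formal; the only point requiring care there is the index bookkeeping ($p\ge q$), which is exactly what guarantees that only the high-codimension sheaves $\EExt^p(E,\OO_X)$ with $p\ge 1$ can contribute.
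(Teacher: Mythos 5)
Your argument is correct, and it is a genuinely different route from the one in the paper. You prove the case $A=\OO_X$ first, by combining reflexive $\Leftrightarrow$ torsion-free and $S_2$ with Auslander--Buchsbaum to force $\codimension \EExt^p(E,\OO_X)\geq p+2$; this base case is in fact one of the equivalent characterisations of reflexivity in \cite[Proposition 1.1.10]{HL}, which the paper itself invokes elsewhere (e.g.\ in the proof of Lemma \ref{lemma29}), so you could also just cite it. You then transport the bound to arbitrary $A$ via the hyper-Tor spectral sequence $\mathcal{T}or_j(\EExt^p(E,\OO_X),A)\Rightarrow \EExt^{p-j}(E,A)$, where the index bookkeeping $p=q+j\geq q$ and the fact that Tor does not enlarge supports do the rest. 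The paper instead writes $E=\EExt^0(C,\omega_X)$ for some coherent sheaf $C$, runs the Grothendieck spectral sequence $\EExt^p(\EExt^{-q}(C,\omega_X),A)\Rightarrow H^{p-q}(C\Lo A')$, and uses the vanishing of the abutment in positive degrees to bound $E_2^{p,0}=\EExt^p(E,A)$ by the terms $E_r^{p+r,-(r-1)}$; the only codimension input it then needs is the weaker general estimate $\codimension\EExt^p(E,F)\geq p$ for arbitrary coherent sheaves, which it proves by dimension shifting on $F$. Your version is more self-contained on the commutative-algebra side and isolates the role of reflexivity cleanly in the $A=\OO_X$ case; the paper's version avoids depth arguments entirely and leans only on the soft bound $\codimension\EExt^p\geq p$. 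Both are valid proofs of the lemma.
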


\begin{proof}
Consider the two derived functors $F, G : D(X) \to D(X)$ where $F (-) := R\HHom (-,\omega_X)$ and $G (-) := R\HHom (-,A)$.  Then for any complex $C$, we have $(G \circ F)(C) \cong C \Lo (A \otimes \omega_X^\ast)$. By \cite[Proposition 2.66]{FMTAG} (also see \cite[Lemma 1.1.8]{HL}), for any coherent sheaf $C$ on $X$, we have a spectral sequence
\begin{equation}
E_2^{p,q} := \EExt^p ( \EExt^{-q} (C,\omega_X), A) \Rightarrow H^{p-q}(C \Lo A')\ ,
\end{equation}
where $A' := A \otimes \omega_X^\ast$.

Since $E$ is reflexive, we have $E = \EExt^0 (C,\omega_X)$ for some coherent sheaf $C$ by \cite[Proposition 1.1.10]{HL}. As in the argument in \cite[p.6]{HL}, the term $E^{p,0}_2$ fits in the short exact sequences
\begin{equation*}
0 \to E^{p,0}_3 \to E_2^{p,0} \to E^{p+2,-1}_2
\end{equation*}
(since $E_2^{p,q}=0$ for $q>0$).  In fact, we have a short exact sequence
\begin{equation*}
0 \to E^{p,0}_{r+1} \to E^{p,0}_r \to E^{p+r,-(r-1)}_r \text{\quad for all $r\geq 2$}.
\end{equation*}
Since we also have $E_\infty^{p,0} = H^p (C \Lo A')=0$ for $p>0$, we have
\begin{align*}
  \dimension E_2^{p,0} &\leq \max{ \{ \dimension E_2^{p+2,-1}, \dimension E^{p,0}_3 \}}  \\
    &\leq \max{ \{ \dimension E_2^{p+2,-1}, \dimension E_3^{p+3,-2}, \dimension E^{p,0}_4\} } \\
    &\vdots \\
   &\leq \max_{r \geq 2} \{\dimension E_r^{p+r,-(r-1)}\}.
\end{align*}
So it suffices for us to show that $E_r^{p+r,-(r-1)}$, for $p\geq 0$ and $r \geq 2$, has codimension at least $p+r$.  It further suffices for us to show that for any coherent sheaves $E,F$ on $X$, we have $\codimension \EExt^p (E,F)  \geq p$ for any $p>0$.

Write $F_0 := F$.  For each integer $i \geq 0$, we take any surjection $\OO_X (m_i)^{\oplus r_i} \twoheadrightarrow F_i$ for some $m_i \ll 0$ and $r_i$, and let $F_{i+1}$ be the kernel.  Hence we have a short exact sequence
\begin{equation}\label{ses1}
  0 \to F_{i+1} \to \OO_X (m_i)^{\oplus r_i} \to F_i \to 0
\end{equation}
for any $i \geq 0$, where  $F_{i+1}$  is necessarily torsion-free.

Applying the functor $\EExt^p (E,-)$ to \eqref{ses1} when $i=0$, we obtain an exact sequence
\[
  \EExt^p (E,\OO_X (m_0))^{\oplus r_0} \to \EExt^p (E,F_0) \to \EExt^{p+1} (E,F_{1}).
\]
By \cite[Proposition 1.1.6(i)]{HL}, we have $\codimension \EExt^p (E,\OO_X (m_0)) \geq p$.  Hence it suffices to show $\codimension \EExt^{p+1} (E,F_1) \geq p$.

Applying the functor $\EExt^{p+1} (E,-)$ to \eqref{ses1} when $i=1$, we obtain an exact sequence
\[
  \EExt^{p+1} (E,\OO_X (m_1))^{\oplus r_1} \to \EExt^{p+1} (E,F_1) \to \EExt^{p+2} (E,F_2),
\]
where $\codimension \EExt^{p+1} (E,\OO_X (m_1)) \geq p+1$  by \cite[Proposition 1.1.6(i)]{HL} again.  Hence it suffices to show $\codimension \EExt^{p+2} (E,F_2) \geq p$, and so on.

Since $X$ is smooth of dimension $n$, the sheaf $E$ has  homological dimension at most $n$, and so $\EExt^{p+r} (E,F_r)=0$ whenever $p+r>n$.  Hence we are done.

\end{proof}

\begin{theorem}\label{main1} 
Suppose $\pi: X \to S$ is an elliptic fibration whose fibers are all Cohen-Macaulay curves with trivial dualising sheaves. If $F$ is a reflexive $\Psi$-WIT$_1$ sheaf on $X$, then $F$ satisfies
$
\Ext^1 (\mathcal B_X \cap W_{0,X}, F)=0.
$
\end{theorem}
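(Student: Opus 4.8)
The plan is to prove the vanishing object by object: fix $B \in \mathcal B_X \cap W_{0,X}$ and show $\Ext^1(B,F) = 0$. Since $F$ is reflexive it is torsion-free, and since $B$ is a torsion sheaf, the natural way to exploit reflexivity is to arrange for $F$ to sit in the \emph{first} argument of an $\EExt$-sheaf, where Lemma \ref{lemma13} applies. First I would therefore use Serre duality on $X$ (put $n = \dimension X$) to write $\Ext^1(B,F) \cong \Ext^{n-1}(F, B\otimes \omega_X)^\ast$, and feed $\Ext^{n-1}(F, B\otimes\omega_X)$ into the local-to-global spectral sequence $H^p(X, \EExt^q(F, B\otimes\omega_X)) \Rightarrow \Ext^{p+q}(F, B\otimes\omega_X)$. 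This is exactly where Lemma \ref{lemma13} enters, applied to the reflexive sheaf $F$ and the arbitrary coherent sheaf $A = B\otimes\omega_X$: for $q\geq 1$ it gives $\codimension \EExt^q(F, B\otimes\omega_X) \geq q+2$, hence $\dimension \supp \EExt^q(F, B\otimes\omega_X) \leq n-q-2$, so each term $H^{n-1-q}(X, \EExt^q(F, B\otimes\omega_X))$ vanishes for $q\geq 1$ on dimension grounds. Consequently the only contribution in total degree $n-1$ comes from $q=0$, and $\Ext^{n-1}(F, B\otimes\omega_X)$ is a subquotient of $H^{n-1}(X, \HHom(F, B\otimes\omega_X))$. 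The problem reduces to showing $H^{n-1}(X, \mathcal G)=0$, where $\mathcal G := \HHom(F, B\otimes\omega_X)$.

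Next I would analyse $\mathcal G$ through the fibration. Since $\mathcal G$ is supported on $\supp B$ and $d(B)=0$, every codimension-one component $C$ of $\supp B$ satisfies $C\cdot f = 0$ and is therefore vertical, so $\pi(\supp \mathcal G) \subseteq \pi(\supp B)$ has dimension at most $\dimension S - 1 = n-2$. Because the fibres of $\pi$ are one-dimensional, $R^j\pi_\ast \mathcal G = 0$ for $j\geq 2$, and the Leray spectral sequence for $\pi$ then yields $H^{n-1}(X,\mathcal G) \cong H^{n-2}(S, R^1\pi_\ast \mathcal G)$. Thus it suffices to prove that $R^1\pi_\ast \mathcal G$ is supported in dimension at most $n-3$.

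For this final point I would argue fibrewise, and here the hypothesis that the fibres are Cohen--Macaulay with trivial dualising sheaf is essential, since it guarantees $\omega_X|_{X_s}\cong\OO_{X_s}$ and the validity of Serre duality on every fibre, including singular ones. Over a general point $s$ of any component of $\pi(\supp B)$, $F$ is locally free near $X_s$ (reflexivity makes its non-locally-free locus of codimension $\geq 3$), so $\mathcal G|_{X_s}\cong\HHom_{X_s}(F|_{X_s}, B|_{X_s})$ and fibrewise Serre duality identifies $H^1(X_s, \mathcal G|_{X_s})^\ast$ with $\Hom_{X_s}(B|_{X_s}, F|_{X_s})$. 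The WIT conditions now control the fibrewise slopes: the fibre versions of Lemmas \ref{lemma2} and \ref{lemma4} force every Harder--Narasimhan slope of $B|_{X_s}$ to exceed $b/a$ and every Harder--Narasimhan slope of $F|_{X_s}$ to be at most $b/a$, so $\mu_{\min}(B|_{X_s}) > b/a \geq \mu_{\max}(F|_{X_s})$ and the $\Hom$ vanishes (the case where $B|_{X_s}$ is a torsion sheaf on the curve being immediate). By base change this makes $R^1\pi_\ast\mathcal G$ vanish generically on each component of $\pi(\supp B)$, so its support has dimension at most $n-3$ and $H^{n-2}(S, R^1\pi_\ast\mathcal G)=0$, completing the argument.

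I expect the main obstacle to be securing the reflexivity-strength bound $\codimension \EExt^q(F, A)\geq q+2$ for an \emph{arbitrary} coefficient sheaf $A = B\otimes\omega_X$ rather than a line bundle: ordinary torsion-freeness of $F$ would give only codimension $\geq q+1$, one short of what is needed to discard the higher $\EExt$ terms in total degree $n-1$, and closing precisely this gap is the purpose of Lemma \ref{lemma13}. The secondary delicate point is the fibrewise step, namely justifying the base-change and semicontinuity passage and checking that the $\Psi$-WIT$_0$ and $\Psi$-WIT$_1$ properties genuinely propagate to the stated bounds on the extreme Harder--Narasimhan slopes on the general (possibly singular) fibre.
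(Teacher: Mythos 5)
Your proposal follows essentially the same route as the paper's proof: Serre duality plus the local-to-global spectral sequence with Lemma \ref{lemma13} to isolate $H^{n-1}(X,\HHom(F,B\otimes\omega_X))$, then Leray and a dimension count to reduce to generic vanishing of $R^1\pi_\ast$, then cohomology-and-base-change and fibrewise Serre duality on the Cohen--Macaulay fibres to reduce to $\Hom_{X_s}(B|_{X_s},F|_{X_s})=0$. The only (harmless) deviation is at the final step, where the paper concludes directly from the orthogonality $\Hom(\Psi_s\text{-WIT}_0,\Psi_s\text{-WIT}_1)=0$ after establishing the fibrewise WIT properties, rather than passing through Harder--Narasimhan slope bounds on the possibly singular fibre as you do.
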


\begin{proof}
We would like to show $\Ext^1(A, F)=0$ for any $A \in \mathcal B_X \cap W_{0,X}$. Using Serre duality, we have $\Ext^1(A,F)=\Ext^{n-1}(F,A\otimes \omega_X)$. Consider the local-to-global spectral sequence for $\Ext$,
\begin{equation}\label{eq2}
  E_2^{p,q} = H^p (X, \EExt^q (F,A\otimes \omega_X)) \Rightarrow \Ext^{p+q}(F,A\otimes \omega_X).
\end{equation}

Since $F$ is reflexive, by Lemma \ref{lemma13} we have $\codimension \EExt^q (E,A\otimes \omega_X) \geq q+2$ for all $q >0$. Therefore the only nonvanishing term in $E_2^{p,q}$ for $p+q=n-1$ is $E_2^{n-1,0}=H^{n-1}(X,\EExt^0 (F,A\otimes \omega_X))$ and we have a surjection
 \begin{equation}\label{eq3}
  H^{n-1} (X,\EExt^0 (F,A\otimes \omega_X)) \twoheadrightarrow \Ext^{n-1} (F,A\otimes \omega_X).
 \end{equation}

We can further assume the support of $\pi_{*}A$ is a reduced scheme, following \cite[Theorem 2.19, Step 2]{Lo5}. Let $C:=\text{supp}(\pi_{*}A)$ and the support of $A$ is contained in a subscheme $D$ which fits into the Cartesian diagram
\begin{equation}
\xymatrix{
 D \ar[d]^{\pi}\arinj[r]^\iota & X \ar[d]^\pi \\
 C \arinj[r] & S
}
\end{equation}

First note that we have
\[
H^{n-1}(X, \HHom (F, A \otimes \omega_X)) \cong H^{n-1}(D, \bar{A})
\] where $\bar{A}$ is a coherent sheaf on $D$ satisfying $\iota_{*}\bar{A} = \HHom(F, A \otimes \omega_X)$.
We apply the Leray spectral sequence to $\pi$ and obtain
\[
  E_2^{p,q} = H^p (C, R^q \pi_{*}(\bar{A})) \Rightarrow H^{p+q}(D, \bar{A}).
\]
Since all the fibres are 1-dimensional, the only nonvanishing terms in $E_2^{p,q}$ for $p+q=n-1$ are such that $(p,q)=(n-1,0),(n-2,1)$. Since $A \in \mathcal B_X$, the dimension of $D$ is at most $n-1$. If the dimension of $D$ is strictly less than $n-1$, then the dimension of $C$ is at most $n-3$. In this case there is nothing to prove. Hence it suffices to assume that $\text{dim}(D)=n-1$ and $\text{dim}(C)=n-2$. And we have $H^{n-2}(C, R^1 \pi_{\ast}(\bar{A})) \cong H^{n-1}(D, \bar{A})$. Now it suffices to show that the dimension of the support of $R^1 \pi_{*}(\bar{A})$ is at most $n-3$.
It is equivalent to showing that $R^1 \pi_\ast (\bar{A})$ has codimension at least 1 in $C$, i.e.
\begin{equation}
\text{for a general closed point } s \in C, \text{we have }
R^1 \pi_\ast (\bar{A}) \otimes k(s) =0.
\end{equation}

By generic flatness \cite[052B]{stacks}, $\bar{A}$ is flat over an open dense subscheme of $C$.  Now, let $s \in C$ be a general closed point, $g$ be the fibre $\pi^{-1}(s)$, and $\bar{A}|_s$ be the (underived) restriction of $\bar{A}$ to the fibre $g$ over $s$.  By cohomology and base change \cite[Theorem III 12.11]{Harts}, we have
\[
R^1 \pi_\ast (\bar{A}) \otimes k(s) \cong H^1 (g,\bar{A} |_s) .
\] So the theorem would be proved if we can show that $H^1(g,\bar{A}|_s)=0$.

By our assumptions, the fibre $g := \pi^{-1}(s)$ is a projective Cohen-Macaulay curve with trivial dualising sheaf. Using Serre duality, we have
\begin{equation}\label{eqSerre}
  H^1(g,\bar{A}|_s) \cong \Ext^1_g (\OO_g, \bar{A}|_s) \cong \Hom_g (\bar{A}|_s, \OO_g).
\end{equation}

Denote by $\Psi_s$ the induced Fourier-Mukai transform on the fibres $D(X_s) \to D(Y_s)$. Following \cite[Theorem 2.19, Step 4]{Lo5}, where \cite[Proposition A.85, (6.3), Proposition 6.1]{FMNT} are applied, we can similarly show that $A|_s$ is $\Psi_s$-WIT$_0$ for a general closed point $s \in C$.

Since $F$ is reflexive, it is locally free outside a $(n-3)$-dimensional closed subset $Z$ of $X$. So its locally free locus is still open and nonempty in $C$. Following \cite[Theorem 2.19, Step 5]{Lo5},
we can show $F|_s$ is $\Psi_s$-WIT$_1$ for a general closed point $s \in C$.

Now we have, for a general $s \in C$,
\begin{align*}
  \Hom_g (\bar{A}|_s, \OO_g) & = \Hom_g ( \HHom (F,A \otimes \omega_X)|_s, \OO_g) \\
  &\cong \Hom_g (A|_s, F|_s),
\end{align*} which must vanish  since $A|_s$ is $\Psi_s$-WIT$_0$ and $F|_s$ is $\Psi_s$-WIT$_1$. This completes the proof of the theorem.
\end{proof}

Theorem \ref{main1} combined with Lemma \ref{lemma10}, \ref{lemma11}, and Theorem \ref{thm0} gives the following:
\begin{corollary}\label{coro1}
Suppose $\pi: X \to S$ is an elliptic fibration whose fibres are all Cohen-Macaulay with trivial dualising sheaves. Then for any reflexive sheaf $F$ with  $\mu(F)<b/a$ such that its restriction to the generic fibre of $\pi$ is stable, we have $F$ is $\Psi$-WIT$_1$ and $\widehat{F}$ is torsion-free and stable with respect to some polarisation on $Y$.
\end{corollary}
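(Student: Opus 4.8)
The plan is to chain together the four cited results, with essentially all of the genuinely new work occurring in the stability claim. Throughout write $n := \dimension X = \dimension Y$, so that $\dimension S = n-1$ and the fibres of $\hat\pi$ are one-dimensional.

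\emph{Step 1: $F$ is $\Psi$-WIT$_1$.} Since $F$ is reflexive it is in particular torsion-free, and by hypothesis its restriction to the general fibre of $\pi$ is stable with $\mu(F) < b/a$. Thus Lemma \ref{lemma10} applies directly and gives that $F$ is $\Psi$-WIT$_1$; write $\hat F$ for its transform. Because $F$ has positive rank and $\mu(F) < b/a$, Lemma \ref{lemma11} further identifies $\hat F$ as a coherent sheaf on $Y$ of positive rank (with $\mu(\hat F) > -c/a$), so it makes sense to ask whether $\hat F$ is torsion-free and stable.

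\emph{Step 2: $\hat F$ is torsion-free.} I would deduce this from Theorem \ref{thm0}, whose hypotheses are that $F$ is torsion-free (clear), that $F$ is $\Psi$-WIT$_1$ (Step 1), and that $\hat F$ restricts to a torsion-free sheaf on the generic fibre of $\hat\pi$. For the last hypothesis I would use the compatibility of the relative transform with the fibrewise transform $\Psi_s$ on $D(X_s) \to D(Y_s)$: this is the same fibrewise analysis already invoked in the proof of Theorem \ref{main1}, and it shows that for generic $s$ the restriction $\hat F|_{Y_s}$ is the transform of the stable sheaf $F|_{X_s}$, which is again stable on the elliptic curve $Y_s$ and in particular torsion-free. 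Theorem \ref{thm0} then reduces torsion-freeness of $\hat F$ to the vanishing $\Ext^1(\mathcal B_X \cap W_{0,X}, F) = 0$, and this is precisely the conclusion of Theorem \ref{main1}, which applies since $F$ is reflexive and $\Psi$-WIT$_1$ and the fibres of $\pi$ are Cohen--Macaulay with trivial dualising sheaf. Hence $\hat F$ is torsion-free.

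\emph{Step 3: $\hat F$ is stable for a suitable polarisation.} This is the step I expect to be the main obstacle. The inputs are that $\hat F$ is torsion-free and that its restriction to the generic fibre of $\hat\pi$ is stable (both from Step 2). I would prove slope-stability, which implies Gieseker stability, for a polarisation in which the fibre degree dominates the slope. Fix an ample class $A$ on $S$ and an ample class $H$ on $Y$, and set $L_\epsilon := \hat\pi^\ast A + \epsilon H$ for small $\epsilon > 0$. Since the fibres are one-dimensional, $(\hat\pi^\ast A)^{n-1}$ is a positive multiple of the class of a fibre of $\hat\pi$, so the $\epsilon^0$-coefficient of $\mu_{L_\epsilon}(-)$ is a positive multiple of the fibre slope $\mu(-)$. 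It suffices to test saturated subsheaves $G \subset \hat F$, for which $\hat F/G$ is torsion-free. If $\rank G = \rank \hat F$ then $\hat F/G$ has rank $0$ and hence vanishes, forcing $G = \hat F$; otherwise $\rank G < \rank \hat F$, and for generic $s$ (using generic flatness of $\hat F/G$) the restriction $G|_{Y_s}$ is a proper subsheaf of the stable sheaf $\hat F|_{Y_s}$, whence $\mu(G) < \mu(\hat F)$ strictly. Thus every proper saturated subsheaf has strictly smaller fibre slope, so strictly smaller $L_\epsilon$-slope once $\epsilon$ is small enough. The delicate point is to choose $\epsilon$ uniformly: one bounds the gap $\mu(\hat F) - \mu(G)$ below using integrality of fibre degrees together with $\rank G \le \rank \hat F$, and controls the first-order term in $\epsilon$ by a boundedness argument of the kind standard in the Friedman--Morgan--Witten ``suitable polarisation'' technique. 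This yields slope-stability, hence Gieseker stability, of $\hat F$ with respect to $L_\epsilon$ for $\epsilon$ sufficiently small.
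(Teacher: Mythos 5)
Your proposal is correct and follows essentially the same route as the paper: Lemma \ref{lemma10} for the WIT$_1$ property, Lemma \ref{lemma11} for nonvanishing rank, and Theorems \ref{thm0} and \ref{main1} for torsion-freeness of $\hat F$. The only difference is in Step 3, where the paper simply cites Lemmas 9.5 and 2.1 of Bridgeland--Maciocia for stability with respect to a suitable polarisation, whereas you sketch the proof of those lemmas (the Friedman--Morgan--Witten-style ``fibre degree dominates'' argument) directly; your sketch is the standard argument and is fine.
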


\begin{proof}
Take any reflexive sheaf $F$ as described. Then $F$ is $\Psi$-WIT$_1$ due to Lemma \ref{lemma10} and we have $r(\widehat{F}) \neq 0$ by Lemma \ref{lemma11}.  Then $\widehat{F}$ is torsion-free by Theorem \ref{thm0} and Theorem \ref{main1}. By \cite[Lemma 9.5]{BMef} and \cite[Lemma 2.1]{BMef}, $\widehat{F}$ is stable on $Y$ with respect to some polarisation.
\end{proof}

\section{Moduli of Stable Complexes}
\label{moduli1}

In this section, we will construct an open immersion from a moduli of  2-term complexes on $X$ to a moduli space of Gieseker stable sheaves on $Y$.  Throughout this section, suppose $n \geq 3$ and consider the following heart of a t-structure
\[
\mathcal{A}^{p} = \langle {\rm Coh}_{\leq n-2}(X),
{\rm Coh}_{\geq n-1}(X)[1] \rangle.
\] The heart is obtained from ${\rm Coh}(X)$ by tilting once.

In the following, we make use of polynomial stability conditions on the derived category  $D^b(X)$ in the sense of  Bayer \cite{BayerPBSC}.   Included in Appendix \ref{PSC} are  some basics on polynomial stability conditions.

We consider two different types of  polynomial stability conditions, W1 and W2, on $X$.  For either of these types, we require that  no two of the stability vectors $\rho_i$ are collinear.  We impose the following additional assumptions:
\begin{itemize}
\item[For W1:] we have $\rho_0, \rho_1, \cdots, \rho_{n-2}, -\rho_{n-1}, -\rho_n \in \mathbb{H}$, as well as  $\phi(\rho_0) > \phi(-\rho_n)$,  $\phi(-\rho_{n-1}) > \phi(-\rho_n)$, and $\phi(-\rho_n)>\phi(\rho_i)$ for $1 \leq i \leq n-2$.
\item[For W2:] we have $\rho_0, \rho_1, \cdots, \rho_{n-2}, -\rho_{n-1}, -\rho_n \in \mathbb{H}$, as well as $\phi(-\rho_{n-1}) > \phi(-\rho_n)$, and $\phi(\rho_i) > \phi(-\rho_n)$ for $0 \leq i \leq n-2$.
\end{itemize}

Figures \ref{figure-W1} and \ref{figure-W2} below illustrate possible configurations of the $\rho_i$ for stabilities of types W1 and W2.  Note, for instance, that under our definition it is possible for a polynomial stabiliity of type W1 to have $\phi (\rho_0) > \phi (-\rho_{n-1})$.



\begin{figure*}[h]
\centering
\setlength{\unitlength}{1mm}
\begin{picture}(50,40)
\multiput(0,15)(1,0){50}{\line(1,0){0.5}}
\multiput(25,0)(0,1){40}{\line(0,1){0.5}}
\multiput(36,19)(0,1){3}{\line(0,1){0.5}}
\put(25,15){\vector(-4,1){15}}
\put(0,18.6){$-\rho_{n-1}$}
\put(25,15){\vector(-1,1){13}}
\put(7.5,28){$\rho_0$}
\put(25,15){\vector(1,2){10}}
\put(30,36){$-\rho_n$}
\put(25,15){\vector(3,1){14}}
\put(39.5,20){$\rho_{n-2}$}
\put(25,15){\vector(3,2){14}}
\put(39.5,24){$\rho_{1}$}
\end{picture}
\caption{A possible configuration of the $\rho_i$ for $W1$}
\label{figure-W1}
\end{figure*}
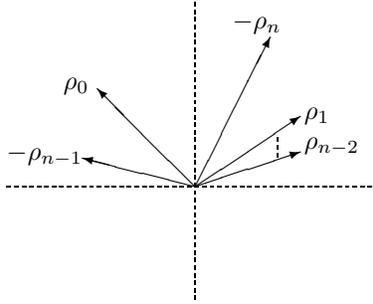

\begin{figure*}[h]
\centering
\setlength{\unitlength}{1mm}
\begin{picture}(50,40)
\multiput(0,15)(1,0){50}{\line(1,0){0.5}}
\multiput(25,0)(0,1){35}{\line(0,1){0.5}}
\multiput(22,29)(1,0){5}{\line(1,0){0.5}}
\put(25,15){\vector(-4,1){15}}
\put(0,18.6){$-\rho_{n-1}$}
\put(25,15){\vector(-1,1){16}}
\put(5,32){$\rho_0$}
\put(25,15){\vector(3,2){16}}
\put(40,27){$-\rho_n$}
\put(25,15){\vector(1,4){4}}
\put(25,15){\vector(-1,3){6}}

\put(15,30){$\rho_1$}
\put(30,30){$\rho_{n-2}$}
\end{picture}
\caption{A possible configuration of $\rho_i$ for $W2$}
\label{figure-W2}
\end{figure*}
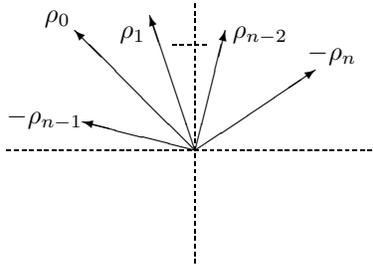

Using the terminology from \cite{Lo3}, when $X$ is of dimension three, stabilities of type W1 coincide with stabilities of type V2 (which includes PT stability, a stability that was studied in \cite{Lo1, Lo2}), while stabilities of type W2 coincide with stabilities of type V3.

The following Lemma \ref{lemma_a} is analogous to \cite[Proposition 2.24]{Lo2}, with essentially the same proof.

\begin{lemma}\label{lemma_a}
Let $\sigma$ be a polynomial stability condition of type  W1 and $E \in \mathcal{A}^p$ a 2-term complex with nonzero rank. Then conditions (1) through (3) below hold if $E$ is $\sigma$-semistable:
\begin{itemize}
\item[(1)] $H^{-1}(E)$ is a $\mu$-semistable torsion-free sheaf;
\item[(2)] $H^{0}(E)$ is 0-dimensional;
\item[(3)] $\Hom_{D(X)}(\mathcal{O}_x,E)=0$ for any $x \in X$, where $\mathcal{O}_x$ is the skyscraper sheaf at the closed point $x$.
\end{itemize}
When $ch_0(E)$ and $ch_1(E)$ are relatively prime, $E$ is $\sigma$-semistable if and only if (1) through (3) hold.
\end{lemma}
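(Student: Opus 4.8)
The plan is to work entirely inside the tilted heart $\mathcal{A}^p$ and to convert $\sigma$-(semi)stability into comparisons of the phase function $\phi(-)(m)$ for $m \gg 0$, reading off the leading and sub-leading behaviour of the polynomial central charge from the dimension of support. For an object of $\mathcal{A}^p$ the top-dimensional part governs $\phi$ to leading order: a sheaf of dimension $d \le n-2$ placed in degree $0$ has $\phi \to \phi(\rho_d)$, while a sheaf of dimension $n$ (resp.\ $n-1$) placed in degree $-1$ has $\phi \to \phi(-\rho_n)$ (resp.\ $\phi(-\rho_{n-1})$). Since $E$ has nonzero rank and $H^0(E) \in \Coh_{\le n-2}(X)$ carries no rank, we have $r(H^{-1}(E)) > 0$, so $\phi(E)(m) \to \phi(-\rho_n)$ to leading order.

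For the forward implication I would treat each of (1)--(3) by exhibiting a distinguished sub- or quotient object and invoking the relevant W1 inequality. For (3), a nonzero map $\mathcal{O}_x \to E$ is a monomorphism in $\mathcal{A}^p$ because $\mathcal{O}_x$ is simple; as $\phi(\mathcal{O}_x) \to \phi(\rho_0) > \phi(-\rho_n) = \phi(E)$, this contradicts semistability, giving $\Hom_{D(X)}(\mathcal{O}_x, E) = 0$. For the torsion-free part of (1), the maximal torsion subsheaf $T \subseteq H^{-1}(E)$ is pure of dimension $n-1$ (as $H^{-1}(E) \in \Coh_{\ge n-1}(X)$), so $T[1] \hookrightarrow E$ has $\phi \to \phi(-\rho_{n-1}) > \phi(-\rho_n)$, again contradicting semistability; hence $T = 0$. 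For (2), $H^0(E)$ is a quotient of $E$ in $\mathcal{A}^p$, and if its support had dimension $d$ with $1 \le d \le n-2$ then $\phi(H^0(E)) \to \phi(\rho_d) < \phi(-\rho_n) = \phi(E)$, violating the quotient inequality; thus $H^0(E)$ is $0$-dimensional. The $\mu$-semistability in (1) is the only part needing the sub-leading term: for a subsheaf $G \subseteq H^{-1}(E)$ the objects $G[1]$ and $E$ share the leading phase $\phi(-\rho_n)$, and the inequality $\phi(-\rho_{n-1}) > \phi(-\rho_n)$ makes the $m^{n-1}$-coefficient comparison equivalent to $\mu(G) \le \mu(H^{-1}(E))$, so $\sigma$-semistability forces $\mu$-semistability of $H^{-1}(E)$.

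For the converse, assuming (1)--(3) and that $ch_0(E), ch_1(E)$ are coprime, I would suppose $F \hookrightarrow E$ is a destabilizing subobject in $\mathcal{A}^p$ with quotient $Q$ and derive a contradiction from the cohomology long exact sequence $0 \to H^{-1}(F) \to H^{-1}(E) \to H^{-1}(Q) \to H^0(F) \to H^0(E) \to H^0(Q) \to 0$. If $H^{-1}(F) = 0$ then $F$ is a sheaf in degree $0$ of dimension $\le n-2$; its phase is strictly below $\phi(E)$ unless it is $0$-dimensional, and the $0$-dimensional case is excluded by (3) since a point $\mathcal{O}_x \hookrightarrow F \hookrightarrow E$ would violate it. If $H^{-1}(F) \ne 0$ it is a subsheaf of the torsion-free $H^{-1}(E)$, hence of positive rank, so $F$ has leading phase $\phi(-\rho_n) = \phi(E)$; the lower-dimensional pieces $H^0(F)$ and $H^0(E)$ contribute only at order $\le m^{n-2}$, so comparing $m^{n-1}$-coefficients reduces the destabilizing inequality to $\mu(H^{-1}(F)) > \mu(H^{-1}(E))$, which is impossible by $\mu$-semistability. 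The borderline case $\mu(H^{-1}(F)) = \mu(H^{-1}(E))$ is exactly where coprimality enters: it upgrades $\mu$-semistability of $H^{-1}(E)$ to $\mu$-stability, forcing $H^{-1}(F) = H^{-1}(E)$, whence $H^{-1}(Q) = 0$ and $Q = H^0(Q)$ is a quotient of the $0$-dimensional sheaf $H^0(E)$, so $\phi(Q) \to \phi(\rho_0) > \phi(E)$ and $F$ does not destabilize. This exhausts the cases and shows $E$ is $\sigma$-semistable.

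The routine parts are the leading-order phase comparisons in the forward direction. The main obstacle is the converse, and within it the borderline equal-slope case, where one must combine the coprimality hypothesis (to pass from $\mu$-semistability to $\mu$-stability of $H^{-1}(E)$) with a careful reading of the cohomology long exact sequence to see that the quotient collapses to a $0$-dimensional sheaf of strictly larger phase. Getting the direction of the sub-leading $\mu$-comparison right---using $\phi(-\rho_{n-1}) > \phi(-\rho_n)$---is the one sign check that must be done honestly, and it is the crux shared with \cite[Proposition 2.24]{Lo2}.
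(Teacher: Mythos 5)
Your argument is correct and follows essentially the same route as the paper, which proves this lemma by citing that the proof of \cite[Proposition 2.24]{Lo2} carries over: one reads off $\sigma$-(semi)stability from the leading and sub-leading terms of the polynomial central charge, using the W1 phase inequalities exactly as you do, with the coprimality hypothesis upgrading $\mu$-semistability to $\mu$-stability in the borderline case of the converse. No gaps.
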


\begin{remark}\label{remark7}
As in the case of PT-semistability on threefolds, if $E \in \mathcal{A}^p$ is $\sigma$-semistable where $\sigma$ is of type W1, then $H^{-1}(E)$ is semistable in $\Coh_{n,n-2}(X)$ (see \cite[Section 3.1]{Lo2}).
\end{remark}

With the same proof as in \cite[Lemma 3.2]{Lo3}, we have:

\begin{lemma}\label{lemma_b}
Let $\tilde{\sigma}$ be a polynomial stability condition of type W2 and $E \in \mathcal{A}^p$ be a $\tilde{\sigma}$-semistable 2-term complex with nonzero rank. Then $H^{-1}(E)$ is a $\mu$-semistable reflexive sheaf.
\end{lemma}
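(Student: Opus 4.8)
The plan is to extract everything from the tilting description of $\mathcal{A}^p$ together with two short exact sequences attached to $E$. Write $F := H^{-1}(E)$; the t-structure truncation triangle gives a short exact sequence $0 \to F[1] \to E \to H^0(E) \to 0$ in $\mathcal{A}^p$, with $H^0(E) \in \Coh_{\leq n-2}(X)$ and $F \in \Coh_{\geq n-1}(X)$, so in particular $F[1]$ is a subobject of $E$. First I would record the asymptotic phase of $E$: since $E$ has nonzero rank, $ch_0(E) = -r(F) < 0$, so the leading term of the polynomial central charge points in the direction of $-\rho_n$ and $\phi(E) \to \phi(-\rho_n)$ as $m \to \infty$. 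Every step below then consists of producing a subobject of $E$ in $\mathcal{A}^p$ whose leading phase strictly exceeds $\phi(-\rho_n)$, which is forbidden by $\tilde{\sigma}$-semistability.

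Next I would show $F$ is torsion-free. Since $F \in \Coh_{\geq n-1}(X)$, its torsion subsheaf $T$ cannot be supported in dimension $\leq n-2$, hence is either zero or pure of dimension $n-1$. If $T \neq 0$ then $T \in \Coh_{\geq n-1}(X)$, so $T[1] \in \mathcal{A}^p$, and the inclusion $T \hookrightarrow F$ lifts to a subobject $T[1] \hookrightarrow F[1] \hookrightarrow E$ of leading phase $\phi(-\rho_{n-1})$. The type W2 hypothesis gives $\phi(-\rho_{n-1}) > \phi(-\rho_n) = \phi(E)$, contradicting semistability; hence $T = 0$ and $F$ is torsion-free.

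The heart of the matter is reflexivity. With $F$ torsion-free on the smooth variety $X$, the canonical map $F \to F^{\vee\vee}$ is injective and $Q := F^{\vee\vee}/F$ is supported in codimension at least two, i.e. $Q \in \Coh_{\leq n-2}(X)$; this codimension bound is precisely the input that in the threefold case placed $Q$ in $\Coh_{\leq 1}$, and it holds verbatim in dimension $n$. Because $F^{\vee\vee}$ is reflexive, hence torsion-free, we have $F^{\vee\vee}[1] \in \mathcal{A}^p$, and shifting $0 \to F \to F^{\vee\vee} \to Q \to 0$ by $[1]$ and rotating produces a short exact sequence $0 \to Q \to F[1] \to F^{\vee\vee}[1] \to 0$ in $\mathcal{A}^p$. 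Composing with $F[1] \hookrightarrow E$ exhibits $Q$ as a subobject of $E$; if $Q \neq 0$ its leading phase is $\phi(\rho_{\dim Q})$ with $\dim Q \leq n-2$, and the W2 inequalities $\phi(\rho_i) > \phi(-\rho_n)$ for $0 \leq i \leq n-2$ again force $\phi(Q) > \phi(E)$, a contradiction. Therefore $Q = 0$ and $F = H^{-1}(E)$ is reflexive. Finally, $\mu$-semistability of $F$ is obtained exactly as for the torsion-free (type W1) case in Lemma \ref{lemma_a}(1): a destabilizing saturated subsheaf $F' \subset F$ lifts, after the shift $[1]$, to a subobject $F'[1] \hookrightarrow E$ of the same leading phase $\phi(-\rho_n)$ but strictly larger subleading term, once more violating $\tilde{\sigma}$-semistability.

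I expect the main obstacle to be bookkeeping rather than conceptual, since the authors rightly regard the argument as identical to \cite[Lemma 3.2]{Lo3}: one must check that each candidate destabilizer ($T[1]$, $Q$, $F'[1]$) is a genuine subobject in the tilted heart $\mathcal{A}^p$ and compute its asymptotic phase correctly against $\phi(-\rho_n)$. The one genuinely higher-dimensional point to verify is that the double-dual cokernel $F^{\vee\vee}/F$ still lands in $\Coh_{\leq n-2}(X)$ and that the W2 phase inequalities $\phi(\rho_i) > \phi(-\rho_n)$ are available across the whole range $0 \leq i \leq n-2$ rather than only at $i = n-2$; granting these, the threefold proof transfers with no essential change.
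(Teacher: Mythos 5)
Your argument is correct and is exactly the proof the paper has in mind: the paper simply cites \cite[Lemma 3.2]{Lo3}, whose content is precisely your phase-comparison in the tilted heart, testing $E$ against the subobjects $T[1]$, $Q=F^{\vee\vee}/F$, and $F'[1]$ and using the W2 inequalities $\phi(-\rho_{n-1})>\phi(-\rho_n)$ and $\phi(\rho_i)>\phi(-\rho_n)$ for $0\leq i\leq n-2$. The two higher-dimensional points you flag (that $F^{\vee\vee}/F$ lies in $\Coh_{\leq n-2}(X)$ and that the W2 phase inequalities cover the whole range of $i$) are indeed the only things to verify, and both hold, so the proof transfers as you describe.
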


\begin{remark}\label{remark14}
We do not make any significant use of polynomial stabilities of type W2 in this article.  Suppose $\sigma$ is a polynomial stability of type W1.  In the case of threefolds as in \cite{Lo5}, in order for $\Psi$  to take a $\sigma$-semistable complex $E \in D(X)$ to a stable sheaf, we assumed additionally that $E$ is  $\tilde{\sigma}$-semistable with $\tilde{\sigma}$ of type W2, and that $E$ satisfies property (P) (see Section \ref{subsection-open-immersion} below).  In this article, however, we find that the additional requirement of $E$ being $\tilde{\sigma}$-semistable can be replaced by the more general condition of $H^{-1}(E)$ being reflexive - see Section \ref{subsection-comparison}.
\end{remark}

\subsection{Openness of stabilities of types W1 and W2}


 When $\sigma$ is a polynomial stability, we  want to speak of  moduli stacks of $\sigma$-semistable  objects.  In order for these moduli stacks to exist, we need to show that being $\sigma$-semistable is an open property for flat families of complexes.  We do this for polynomial stabilities of type W1 below.

To begin with, by Lemma \ref{lemma_a} and Remark \ref{remark7}, we have the following analogue of \cite[Proposition 3.1]{Lo2}, with essentially the same proof:

\begin{pro}\label{prop5}
For flat families of objects in $\mathcal{A}^p$ of nonzero rank, properties (1), (2) and (3) in Lemma \ref{lemma_a} together form an open condition.
\end{pro}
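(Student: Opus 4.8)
The plan is to show that each of the three conditions (1), (2), (3) from Lemma \ref{lemma_a} is separately open in a flat family, and that openness is preserved under finite intersection of open loci in the base. Throughout I would work with a flat family $\mathcal{E}$ of objects in $\mathcal{A}^p$ of nonzero rank over a base scheme $T$, where flatness is understood in the derived sense so that for each $t \in T$ the derived restriction $\mathcal{E}_t = L i_t^* \mathcal{E}$ is an object of $\mathcal{A}^p$ on the fibre $X_t = X$. Since the objects are 2-term complexes concentrated in cohomological degrees $-1$ and $0$, the key players are the two coherent cohomology sheaves $H^{-1}(\mathcal{E}_t)$ and $H^0(\mathcal{E}_t)$, and I expect to reduce everything to classical semicontinuity statements for families of sheaves together with the standard representability of relative $\Hom$-functors.

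\emph{Step 1 (reformulate via cohomology sheaves).} First I would record that conditions (2) and (3) can be phrased purely in terms of the cohomology sheaves and a $\Hom$-vanishing. Condition (2), that $H^0(\mathcal{E}_t)$ is $0$-dimensional, is equivalent to saying $H^0(\mathcal{E}_t) \in \Coh_{\leq 0}(X)$; condition (3) is the vanishing $\Hom_{D(X)}(\OO_x, \mathcal{E}_t)=0$ for all closed points $x$, which (since $\OO_x$ sits in degree $0$) detects zero-dimensional torsion subsheaves of $H^{-1}(\mathcal{E}_t)[1]$, i.e.\ it says $H^{-1}(\mathcal{E}_t)$ has no sections supported at points that lift to maps from $\OO_x$. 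Condition (1) asks that $H^{-1}(\mathcal{E}_t)$ be torsion-free and $\mu$-semistable. The point of this step is that I can apply known semicontinuity machinery to the sheaves $H^i(\mathcal{E}_t)$, provided these form reasonable families; by derived flatness and base change the formation of $H^{-1}$ and $H^0$ commutes suitably with restriction on a constructible stratification, which is enough to run semicontinuity arguments.

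\emph{Step 2 (openness of each condition).} Openness of (2) follows from upper-semicontinuity of fibre dimension of the support of $H^0(\mathcal{E}_t)$: the locus where $\dim \supp H^0(\mathcal{E}_t) \leq 0$ is open. Openness of (1) splits into openness of torsion-freeness of $H^{-1}(\mathcal{E}_t)$ (a standard open condition for flat families of sheaves, via the relative $\mathscr{H}om$ into a local cohomology / purity criterion, or Grothendieck's semicontinuity) and openness of $\mu$-semistability on the generic fibre, which is the classical result that $\mu$-semistability is open in flat families. For (3), I would realise $\Hom_{D(X)}(\OO_x, \mathcal{E})$ as (the fibre of) a relative $\mathscr{H}om$-sheaf and use that its vanishing is an open condition, or equivalently bound the zero-dimensional torsion of $H^{-1}$ via semicontinuity of $\hom(\OO_x, \mathcal{E}_t)$. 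Finally the intersection of the three open loci is open, and Lemma \ref{lemma_a} together with Remark \ref{remark7} guarantees these are exactly the conditions we want.

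\emph{Main obstacle.} The technical heart is Step 1: justifying that the cohomology sheaves $H^{-1}(\mathcal{E}_t)$ and $H^0(\mathcal{E}_t)$ vary well enough in the family to feed into the classical semicontinuity theorems, since derived restriction need not commute with taking cohomology of a complex. The cleanest way around this, following \cite[Proposition 3.1]{Lo2}, is to use that the base locus where the three conditions hold is precisely the locus where $\mathcal{E}_t$ is a $\sigma$-semistable (hence a single-phase) object, and to exploit the fact that on such a locus the complex is, up to shift, governed by the single torsion-free sheaf $H^{-1}(\mathcal{E}_t)$ with $H^0$ zero-dimensional; this rigidity lets one stratify the base and apply generic flatness so that $H^{-1}$ and $H^0$ genuinely form flat families over each open stratum. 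Once that is in place, the individual openness statements are the standard ones cited above, and the argument is essentially the same as in \cite[Proposition 3.1]{Lo2}.
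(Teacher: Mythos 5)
Your proposal matches the paper's approach: the paper's entire proof is the observation that, given Lemma \ref{lemma_a} and Remark \ref{remark7}, the argument of \cite[Proposition 3.1]{Lo2} carries over verbatim, and your decomposition into separate openness of (1), (2), (3) --- with the stratification/generic-flatness device to handle the fact that the cohomology sheaves $H^{-1}(\mathcal{E}_t)$ need not form a flat family --- is exactly that argument. (Two side remarks are slightly off but not load-bearing: $\Hom_{D(X)}(\OO_x,\mathcal{E}_t)$ also receives a contribution from $\Hom(\OO_x,H^0(\mathcal{E}_t))$, not only from $H^{-1}$, and the locus where (1)--(3) hold coincides with the $\sigma$-semistable locus only under the coprimality hypothesis of Lemma \ref{lemma_a}.)
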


The proof of  \cite[Lemma 3.2]{Lo2} also carries over to the case of stabilities of type W1, giving us:

\begin{lemma}\label{lemma23}
Fix an $ch_0 >0$.  Let $\sigma$ be a polynomial stability on $D(X)$ of type W1.  For any $ch_1, ch_2, \cdots, ch_n$, define the set of injections in $\mathcal{A}^p$
\begin{multline*}
  \mathcal{S} := \{ E_0 \hookrightarrow E : E_0 \text{ is a maximal destabilising subobject of $E$ in $\mathcal{A}^p$ w.r.t.\ $\sigma$},  \\
    \text{ where $E$ has properties (1), (2) and (3) and $ch(E)=ch$} \}.
\end{multline*}
Then the set
\[
  \mathcal{S}_{sub} := \{E_0 : E_0 \hookrightarrow E \text{ is in }\mathcal{S}\}.
\]
is bounded.
\end{lemma}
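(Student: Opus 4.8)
The plan is to follow the proof of \cite[Lemma 3.2]{Lo2}, adjusting the phase estimates to the type-W1 ordering of the $\rho_i$. Fix $E$ with $ch(E)=ch$ having properties (1)--(3), let $E_0 \hookrightarrow E$ be its maximal destabilising subobject, and write the defining short exact sequence $0 \to E_0 \to E \to E_1 \to 0$ in $\Ap$ together with its long exact cohomology sequence
\[
0 \to H^{-1}(E_0) \to H^{-1}(E) \to H^{-1}(E_1) \to H^0(E_0) \to H^0(E) \to H^0(E_1) \to 0 .
\]
First I would show that $E_0$ has positive rank. A maximal destabilising subobject is $\sigma$-semistable with $\phi(E_0) > \phi(E)$, and the leading phase of $E$ is $\phi(-\rho_n)$ since $ch_0(E)\neq 0$. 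If $E_0$ had rank zero, then $H^{-1}(E_0) \subseteq H^{-1}(E)$ would be a rank-zero torsion-free sheaf, hence $0$, so $E_0$ would be a torsion sheaf in $\Coh_{\leq n-2}(X)$; its leading phase is $\phi(\rho_k)$ with $k$ the dimension of its support, which for $1 \leq k \leq n-2$ satisfies $\phi(\rho_k) < \phi(-\rho_n)$ by the W1 assumption, contradicting $\phi(E_0) > \phi(E)$, while the remaining possibility that $E_0$ is $0$-dimensional is excluded because a skyscraper subsheaf would give a nonzero element of $\Hom_{D(X)}(\mathcal{O}_x, E)$, contradicting property (3). Hence $E_0$ has positive rank, and Lemma \ref{lemma_a} applies to $E_0$: it too satisfies (1)--(3), so $H^{-1}(E_0)$ is a $\mu$-semistable torsion-free subsheaf of $H^{-1}(E)$ and $H^0(E_0)$ is $0$-dimensional.

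Next I would bound the invariants of $H^{-1}(E_0)$. Since $H^{-1}(E)$ is semistable in $\Coh_{n,n-2}(X)$ (Remark \ref{remark7}) with fixed $ch_0, \dots, ch_{n-1}$, the reduced Hilbert polynomial of its subsheaf $H^{-1}(E_0)$ — equivalently the top Chern numbers $ch_0, ch_1, ch_2$ — is bounded above, while the destabilising inequality $\phi(E_0) \geq \phi(E)$, read off from the subleading coefficients of the type-W1 central charge, bounds the same data from below; so these invariants take finitely many values. As $H^{-1}(E_0)$ is itself $\mu$-semistable torsion-free with bounded rank, $c_1$ and discriminant, the boundedness theorem for semistable sheaves (see \cite{HL}) shows that the family $\{ H^{-1}(E_0) \}$ is bounded, and in particular all Chern numbers of $H^{-1}(E_0)$, including $ch_n$, are bounded.

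It then remains to bound the $0$-dimensional sheaf $H^0(E_0)$, i.e.\ its length $\ell = ch_n(H^0(E_0))$. From the long exact sequence, $H^0(E_0)$ is an extension of a $0$-dimensional subsheaf of $H^0(E)$ by the (necessarily $0$-dimensional) cokernel of $H^{-1}(E) \to H^{-1}(E_1)$. Because the total invariant $ch(E)$ is fixed and $ch(H^{-1}(E_0))$ now ranges in a finite set, bounding $\ell$ is equivalent to bounding $ch_n(E_0)$; this is where I would invoke property (3) for $E_0$ together with the maximality of $E_0$, exactly as in \cite[Lemma 3.2]{Lo2}, to rule out arbitrarily long $0$-dimensional contributions. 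Once $ch(E_0)$ is confined to a finite set and both cohomology sheaves range in bounded families, $\{E_0\}$ is cut out of finitely many Quot-scheme strata and is therefore bounded, proving that $\mathcal{S}_{sub}$ is bounded.

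The main obstacle is this last step. The ambient objects $E$ do not form a bounded family even with $ch(E)$ fixed — the length of $H^0(E)$ and $ch_n(H^{-1}(E))$ are both unbounded — so boundedness of $E_0$ cannot simply be inherited from $E$, and property (3) on its own does not bound $0$-dimensional quotients, as the unbounded lengths appearing in moduli of stable pairs show. The delicate point is therefore to extract a bound on $\ell(H^0(E_0))$ from the combination of the fixed total invariant, the subobject structure, and property (3); the lower bounds on the invariants of $H^{-1}(E_0)$ coming from the single polynomial inequality $\phi(E_0) \geq \phi(E)$ are what pin down the relevant Chern data, and it is precisely here that the type-W1 configuration of the $\rho_i$ enters.
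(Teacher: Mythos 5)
Your overall strategy --- porting the proof of \cite[Lemma 3.2]{Lo2} --- is exactly what the paper intends; indeed the paper offers nothing beyond the one-line assertion that that proof ``carries over,'' so everything rests on the details, and that is where your write-up has a genuine gap, one you partly flag yourself. The first step (that $E_0$ has positive rank, hence satisfies (1)--(3) by Lemma \ref{lemma_a}) is correct. The problem is the control of the degree-$n$ Chern data. In your second paragraph you deduce boundedness of $\{H^{-1}(E_0)\}$ from ``$\mu$-semistable torsion-free with bounded rank, $c_1$ and discriminant''; this is not a valid application of the boundedness theorem. On a threefold the ideal sheaves $I_Z$ of zero-dimensional subschemes of arbitrary length are $\mu$-stable with identical $ch_0, ch_1, ch_2$ yet form an unbounded family: boundedness of ($\mu$-)semistable sheaves requires the full Hilbert polynomial, i.e.\ one must first bound $ch_n(H^{-1}(E_0))$, which is precisely what you are trying to conclude. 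Since $ch_n(E_0) = \mathrm{len}\,H^0(E_0) - ch_n(H^{-1}(E_0))$, this is the same unresolved quantity as the length bound you defer to in your final paragraph; the two halves of your argument each lean on the other.

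Moreover, the mechanism you gesture at for closing the gap --- that the type-W1 configuration of the $\rho_i$ in the inequality $\phi(E_0) \succeq \phi(E)$ pins down the remaining data --- points in the wrong direction for the zero-dimensional piece: because $\phi(\rho_0) > \phi(-\rho_n)$ for type W1, enlarging $H^0(E_0)$ strictly \emph{increases} $\phi(E_0)$ at order $m^{-n}$, so the destabilising inequality can only bound $\mathrm{len}\,H^0(E_0) - ch_n(H^{-1}(E_0))$ from \emph{below}, never from above. The missing upper bound has to come from the structure of the quotient $E_1 = E/E_0 \in \Ap$: the long exact sequence gives $H^{-1}(E)/H^{-1}(E_0) \hookrightarrow H^{-1}(E_1) \in \Coh_{\geq n-1}(X)$, so $H^{-1}(E_0)$ is saturated in $H^{-1}(E)$ in codimension $\geq 2$; this is what allows the Grothendieck-type lemma for saturated subsheaves of slope bounded below to bound the family $\{H^{-1}(E_0)\}$, after which the kernel of $H^{0}(E_0) \to H^0(E)$ and hence $\mathrm{len}\,H^0(E_0)$ can be controlled. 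This is the route taken in \cite[Lemma 3.2]{Lo2}; without it (or a substitute) the argument does not close, so as written the proposal does not prove the lemma.
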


To be precise, we list here how the results in \cite{Lo1} generalise to stabilities of type W1 in higher dimensions:

\begin{lemma}\cite[Lemma 3.2]{Lo1}\label{lemma24}
Let $E \in \mathcal{A}^p$ be an object of rank zero, and $\sigma$ be a polynomial stability of type W1. Suppose $E$ is of dimension $n-1$ and $E$ is $\sigma$-semistable; then:
\begin{enumerate}
\item[(a)] if $\phi (\rho_0)>\phi (-\rho_{n-1})$, then $H^0(E)$ must be 0-dimensional;
\item[(b)] if $\phi (\rho_0)< \phi (-\rho_{n-1})$, then $E=H^{-1}(E)[1]$.
\end{enumerate}
If $E$ is of dimension at most $n-2$, then $E$ is $\sigma$-semistable iff $E=H^0(E)$ is a Gieseker semistable sheaf.
\end{lemma}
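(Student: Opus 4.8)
The plan is to run the argument of \cite[Lemma 3.2]{Lo1} essentially unchanged, the only substantive content being an asymptotic phase comparison built from the canonical filtration of the tilted heart. First I would unpack what ``rank zero'' means here: for $E \in \mathcal{A}^p$ one has $H^{-1}(E) \in \Coh_{\geq n-1}(X)$ and $H^0(E) \in \Coh_{\leq n-2}(X)$, and since $H^0(E)$ has rank $0$, vanishing of the rank of $E$ forces $H^{-1}(E)$ to have rank $0$ as well. A rank-zero sheaf in $\Coh_{\geq n-1}(X)$ has no subsheaf of dimension $\leq n-2$, hence is pure of dimension $n-1$ (or zero); combined with $\dim E = n-1$ this makes $H^{-1}(E)$ a nonzero pure sheaf of dimension $n-1$. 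The t-structure then supplies the short exact sequence
\begin{equation*}
0 \to H^{-1}(E)[1] \to E \to H^0(E) \to 0
\end{equation*}
in $\mathcal{A}^p$, exhibiting $H^0(E)$ as a quotient object against which I will test semistability.

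Next I would record the leading-order behaviour of the polynomial central charge (see Appendix \ref{PSC} and \cite{BayerPBSC}): a coherent sheaf of dimension $d$ has $Z(-)(m)$ with leading term $\rho_d \cdot m^d$ scaled by a positive real, so its phase tends to $\phi(\rho_d)$ as $m \to \infty$, and the shift $[1]$ negates $Z$ and sends the limiting phase to $\phi(-\rho_d)$. Applying this to the sequence above, the $(n-1)$-dimensional summand $H^{-1}(E)[1]$ dominates the lower-dimensional $H^0(E)$, so $\phi(E)(m) \to \phi(-\rho_{n-1})$, while $\phi(H^0(E))(m) \to \phi(\rho_e)$ with $e := \dim H^0(E) \leq n-2$.

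For (a) and (b) I would then impose $\sigma$-semistability on the quotient $E \twoheadrightarrow H^0(E)$, which requires $\phi(E) \preceq \phi(H^0(E))$ for $m \gg 0$, i.e.\ asymptotically $\phi(-\rho_{n-1}) \leq \phi(\rho_e)$. For every $1 \leq e \leq n-2$ the W1 inequalities give $\phi(\rho_e) < \phi(-\rho_n) < \phi(-\rho_{n-1})$, a contradiction, so in both cases $H^0(E)$ can be at most $0$-dimensional; since $\phi(\rho_0) \neq \phi(-\rho_{n-1})$ by non-collinearity, (a) and (b) are exactly the two resulting regimes. Under $\phi(\rho_0) > \phi(-\rho_{n-1})$ a $0$-dimensional $H^0(E)$ is compatible with the inequality, so we conclude only that $H^0(E)$ is $0$-dimensional, which is (a); under $\phi(\rho_0) < \phi(-\rho_{n-1})$ the remaining case $e=0$ is also excluded, forcing $H^0(E)=0$ and $E = H^{-1}(E)[1]$, which is (b).

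Finally, for $\dim E \leq n-2$: since a nonzero sheaf in $\Coh_{\geq n-1}(X)$ has support of dimension at least $n-1$, the dimension bound forces $H^{-1}(E)=0$, so $E = H^0(E)$ is a sheaf in $\Coh_{\leq n-2}(X)$. A short cohomology-sequence computation---using that a nonzero sheaf in $\Coh_{\geq n-1}(X)$ cannot embed as a subsheaf of a sheaf in $\Coh_{\leq n-2}(X)$ for dimension reasons---identifies the subobjects of $E$ in $\mathcal{A}^p$ with its ordinary subsheaves, whence $\sigma$-semistability reduces to a phase comparison over all subsheaves. The remaining point is that on $\Coh_{\leq n-2}(X)$ this polynomial phase comparison coincides with the comparison of reduced Hilbert polynomials, so $\sigma$-semistability becomes Gieseker semistability; this is Bayer's reduction of polynomial stability to Gieseker stability \cite{BayerPBSC}. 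I expect this last equivalence, rather than the phase computations for (a) and (b), to be the main obstacle, since it requires tracking the \emph{subleading} coefficients of $Z$ rather than merely the limiting phases $\phi(\rho_i)$, and matching them against the lexicographic Gieseker ordering.
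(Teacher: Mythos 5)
Your handling of parts (a) and (b) is correct and is precisely the argument of \cite[Lemma 3.2]{Lo1} that the paper is invoking (the paper offers no independent proof, only the citation): rank zero forces $H^{-1}(E)$ to be pure of dimension $n-1$, so $\phi(E)(m)\to\phi(-\rho_{n-1})$, and testing semistability against the quotient $E\twoheadrightarrow H^0(E)$ rules out $\dimension H^0(E)=e$ for every $1\leq e\leq n-2$ because $\phi(\rho_e)<\phi(-\rho_n)<\phi(-\rho_{n-1})$; the two signs of $\phi(\rho_0)-\phi(-\rho_{n-1})$ then give (a) and (b). Your reduction of the last clause to sheaves in $\Coh_{\leq n-2}(X)$ whose $\mathcal{A}^p$-subobjects and quotients are ordinary subsheaves and quotient sheaves is also fine.

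The gap is exactly where you suspected, and it is not just bookkeeping: Bayer's identification of polynomial stability with Gieseker stability requires a Gieseker-type configuration, i.e.\ $\phi(\rho_0)>\phi(\rho_1)>\cdots>\phi(\rho_{n-2})$ on the range of dimensions that actually occur, whereas the W1 axioms only give $\phi(\rho_0)>\phi(-\rho_n)>\phi(\rho_i)$ for $1\leq i\leq n-2$ and impose no order among $\phi(\rho_1),\dots,\phi(\rho_{n-2})$. For $n=3$ this is vacuous --- $\Coh_{\leq 1}(X)$ only sees $\rho_0,\rho_1$ and W1 forces $\phi(\rho_0)>\phi(\rho_1)$ --- which is why the threefold proof closes. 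For $n\geq 4$, a $d$-dimensional sheaf with $2\leq d\leq n-2$ is tested against $e$-dimensional subsheaves with $1\leq e<d$, and $\sigma$-semistability forbids these (as purity under Gieseker semistability demands) only when $\phi(\rho_e)>\phi(\rho_d)$; likewise the subleading comparison between two $d$-dimensional sheaves of equal dimension is governed by the signs of $\sin\pi\bigl(\phi(\rho_d)-\phi(\rho_j)\bigr)$ for $j<d$, which W1 does not control. So if, say, $\phi(\rho_1)<\phi(\rho_2)$, a non-pure (hence non-Gieseker-semistable) $2$-dimensional sheaf can still be $\sigma$-semistable, and the ``iff'' fails. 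Closing this step requires either adding the hypothesis $\phi(\rho_0)>\phi(\rho_1)>\cdots>\phi(\rho_{n-2})$ (consistent with Figure \ref{figure-W1} but not forced by the stated definition of W1) or restricting the final clause; the bare citation of Bayer does not do it. You have, in effect, isolated a hypothesis that the paper leaves implicit when it asserts that \cite[Lemma 3.2]{Lo1} ``generalises.''
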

Note that, in case (a) above, we do not necessarily know that $H^{-1}(E)$ is a Gieseker semistable sheaf.  This is different from the case of PT stability on threefolds.

\begin{lemma}\cite[Proposition 3.4]{Lo1}\label{lemma25}
Let $\sigma$ be a polynomial stability of type W1, and $ch$ a fixed Chern character where $ch_0 \neq 0$.  Then the set of $\sigma$-semistable obejcts $E \in \Ap$ of Chern character $ch$ is bounded.
\end{lemma}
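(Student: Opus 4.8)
The plan is to reduce the boundedness of $\sigma$-semistable objects to a boundedness statement for sheaves together with a single numerical bound, and then to extract that bound from semistability. \textbf{Reduction to cohomology sheaves.} Let $E \in \Ap$ be $\sigma$-semistable with $ch(E) = ch$ and $ch_0 \neq 0$. By Lemma \ref{lemma_a}, $G := H^{-1}(E)$ is a $\mu$-semistable torsion-free sheaf and $H^0(E)$ is $0$-dimensional; by Remark \ref{remark7} we may in fact take $G$ to be semistable in $\Coh_{n,n-2}(X)$, i.e.\ slope-semistable with respect to the fixed polarisation. Since $H^0(E)$ is supported in dimension $0$, the short exact sequence $0 \to G[1] \to E \to H^0(E) \to 0$ in $\Ap$ shows that $ch_i(G) = -ch_i(E)$ is fixed for all $i < n$, while the only remaining free data are the length $\ell := \textnormal{length}\,H^0(E)$ and $ch_n(G)$, tied together by $ch_n(G) = \ell - ch_n(E)$.

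\textbf{Reduction to a length bound.} Because $\ell \geq 0$, the relation $ch_n(G) = \ell - ch_n(E)$ bounds $ch_n(G)$ from below. Hence, if I can bound $\ell$ from above, then $ch_n(G)$ is confined to finitely many values and the sheaves $G$ become slope-semistable torsion-free sheaves of fixed total Chern character; such a family is bounded by the Maruyama--Langer boundedness theorem for semistable sheaves. The sheaves $H^0(E)$ then form a bounded family (finite, bounded length), and $E$, being an extension of bounded families in $\Ap$, ranges in a bounded family as well. Thus everything comes down to producing a uniform upper bound on $\ell$.

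\textbf{Bounding the length.} This is the step I expect to be the main obstacle, and it is where $\sigma$-semistability (rather than mere membership in $\Ap$ together with properties (1)--(3)) must be used: the sheaves $G$ with only $ch_{<n}$ fixed already form an unbounded family, since one may shrink $G$ by an arbitrarily long $0$-dimensional quotient without altering its lower Chern classes or its slope-semistability. To control $\ell$ I would apply the $\sigma$-stability inequality to the defining sequence $0 \to G[1] \to E \to H^0(E) \to 0$ and track the order at which $\ell$ enters the polynomial central charge $Z$: the length contributes only through the bottom-degree term of $Z(H^0(E))$, and the type-W1 ordering of the phases $\phi(\rho_i)$ turns the comparison of the relevant subleading coefficients into a genuine inequality, yielding $\ell \leq C$ for a constant $C$ depending only on $ch$ and $\sigma$. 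An alternative route is to pass to the reflexive hull $G^{\vee\vee}$, confine $ch_2(G^{\vee\vee})$ between the fixed value $ch_2(G)$ and the Bogomolov bound, and then invoke boundedness of the destabilising subobjects in Lemma \ref{lemma23} to control the codimension-$\geq 2$ defect $G^{\vee\vee}/G$ and the $0$-dimensional quotient $H^0(E)$ simultaneously.

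\textbf{Conclusion.} Combining the length bound with Maruyama--Langer boundedness of the family of $G$ and the triviality of bounding finite-length sheaves, the objects $E$ of Chern character $ch$ decompose into finitely many bounded families, and hence form a bounded family; this mirrors the threefold argument of \cite[Proposition 3.4]{Lo1}.
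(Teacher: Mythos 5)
Your overall skeleton --- reduce to the cohomology sheaves, note that everything is pinned down except the length $\ell$ of $H^0(E)$, bound $\ell$, then invoke boundedness of slope-semistable sheaves with fixed Chern character and the fact that extensions of bounded families are bounded --- is the same as that of \cite[Proposition 3.4]{Lo1}, which is all the paper itself appeals to. The gap is in your main proposed mechanism for the crucial step. The length $\ell$ enters the central charge only through $ch_n$, i.e.\ through the constant ($m^0$) term of $Z$, whose coefficient is $\rho_0$: a $0$-dimensional sheaf $T$ has $Z(T)=\rho_0\cdot\textnormal{length}(T)$, so $\phi(H^0(E))\equiv\phi(\rho_0)$ independently of $\ell$, while $\phi(E)(m)\to\phi(-\rho_n)$ as $m\to\infty$ because the leading term is governed by $\rho_n\,ch_0(E)$ with $ch_0(E)<0$. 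Since type W1 imposes $\phi(\rho_0)>\phi(-\rho_n)$, the quotient inequality $\phi(E)\preceq\phi(H^0(E))$ is satisfied automatically for \emph{every} value of $\ell$, and the subobject inequality $\phi(H^{-1}(E)[1])\preceq\phi(E)$ likewise sees $\ell$ only in the non-dominant constant term. No asymptotic phase comparison of the kind you describe can therefore yield $\ell\leq C$; ``tracking the order at which $\ell$ enters $Z$'' produces nothing.

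The bound on $\ell$ is in fact purely sheaf-theoretic, and your ``alternative route'' is the correct one (though the appeal to Lemma \ref{lemma23} there is misplaced: that lemma bounds maximal destabilising subobjects and plays no role in this argument). Since $G=H^{-1}(E)$ is torsion-free and $\mu$-semistable with $ch_{<n}(G)$ fixed and $ch_n(G)=\ell-ch_n(E)\geq -ch_n(E)$ bounded below, one compares $G$ with its reflexive hull: the hulls $G^{\vee\vee}$ range over a bounded family (fixed $ch_0, ch_1$, with the intermediate $ch_i\cdot H^{n-i}$ squeezed between the fixed values for $G$ and Bogomolov-type upper bounds, using effectivity of the support cycle of $G^{\vee\vee}/G$), so $ch_n(G^{\vee\vee})$ takes finitely many values; and the quotients $G^{\vee\vee}/G$, being quotients of sheaves in a bounded family with bounded support cycle, have $ch_n$ bounded below by Grothendieck's lemma. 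Together these give the upper bound on $ch_n(G)$, hence on $\ell$. The moral is that once Lemma \ref{lemma_a} has converted $\sigma$-semistability into properties (1)--(3), the stability condition has done all its work, and what remains is a boundedness statement about $\mu$-semistable torsion-free sheaves; this is how \cite[Proposition 3.4]{Lo1} proceeds, and it is the part of your write-up that needs to be promoted from a fallback remark to the actual proof.
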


For the next proposition, we write $k$ for the ground field of the variety $X$, $R$ for an arbitrary discrete valuation ring over $k$, with uniformiser $\pi$ and field of fractions $K$.
We will also write $X_R:= X \otimes_k R$, $X _K := X_R \otimes_R K$ and $X_m := X \otimes_k R/ \pi^m$ for any positive integer $m$.  We denote by $\iota : X_k \hookrightarrow X_R$ and $j : X_K \hookrightarrow X_R$ the closed and open immersions of the central and generic fibres of $X_R \to \Spec R$, respectively.

\begin{proposition}\label{prop6}\cite[Proposition 4.2]{Lo1}
Let $X$ be a smooth projective variety of dimension $n$ over $k$.  Given any object
\[
E_K \in \langle \Coh_{\leq d}(X_K), \Coh_{\geq n}(X_K)[1]\rangle
\]
where $0 \leq d < n$, there exists an object $\widetilde{E} \in D^b(X_R)$ such that:
\begin{itemize}
\item the generic fibre $j^\ast (\widetilde{E}) \cong E_K$ in $D^b(X_K)$;
\item the central fibre $L\iota^\ast (\widetilde{E}) \in \langle \Coh_{\leq d}(X_k), \Coh_{\geq n}(X_k)[1]\rangle$.
\end{itemize}
\end{proposition}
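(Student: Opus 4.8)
The plan is to reconstruct $E_K$ from its two cohomology sheaves together with a single gluing class, to extend each of these three pieces over $R$ with control on the central fibre, and then to reassemble by a mapping cone.

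Since $\Hom(\Coh_{\le d}(X_K),\Coh_{\ge n}(X_K))=0$, the category $\langle \Coh_{\le d}(X_K),\Coh_{\ge n}(X_K)[1]\rangle$ consists exactly of the complexes $E_K$ concentrated in degrees $-1,0$ with $G_K:=H^{-1}(E_K)\in \Coh_{\ge n}(X_K)$ and $T_K:=H^0(E_K)\in \Coh_{\le d}(X_K)$; the truncation triangle reads $G_K[1]\to E_K\to T_K \xrightarrow{\delta_K} G_K[2]$ with $\delta_K\in \Ext^2_{X_K}(T_K,G_K)$, so that $E_K\cong \cone(T_K[-1]\xrightarrow{\delta_K[-1]} G_K[1])$. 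It therefore suffices to produce $R$-flat coherent extensions $\widetilde T,\widetilde G$ on $X_R$ of $T_K,G_K$ with $\widetilde T\otimes_R k\in \Coh_{\le d}(X_k)$ and $\widetilde G\otimes_R k\in \Coh_{\ge n}(X_k)$, together with a class $\widetilde\delta\in \Ext^2_{X_R}(\widetilde T,\widetilde G)$ whose restriction to $X_K$ agrees with $\delta_K$ up to a unit; then $\widetilde E:=\cone(\widetilde T[-1]\xrightarrow{\widetilde\delta[-1]}\widetilde G[1])$ will do the job.

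First I would extend the cohomology sheaves. Every coherent sheaf on $X_K$ admits an $R$-flat coherent extension to $X_R$: push forward by $j_\ast$, choose a coherent subsheaf restricting to the given sheaf, and quotient out its $\pi$-power torsion. For $T_K$ this already suffices, because $X_R$ is smooth over $\Spec R$, so with respect to a relatively ample polarisation pulled back from $X$ the $R$-flatness of $\widetilde T$ forces its generic and central fibres to share a Hilbert polynomial, hence to have the same support dimension; thus $\widetilde T\otimes_R k\in \Coh_{\le d}(X_k)$ automatically. For $\widetilde G$ the same argument gives $\dimension\,\supp(\widetilde G\otimes_R k)=n$, but the central fibre may still carry a torsion subsheaf supported in dimension $\le n-1$ (it is already torsion-free at the generic point $\eta$ of $X_k$, where the stalk is free over the discrete valuation ring $\OO_{X_R,\eta}$). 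To remove this torsion I would run Langton's elementary-modification procedure inside $j_\ast G_K$, replacing $\widetilde G$ by successive modifications along the torsion of the central fibre while keeping the generic fibre equal to $G_K$. The main obstacle is precisely the termination of this procedure: one must exhibit a length-type invariant of the central-fibre torsion that strictly decreases under the modification, so that after finitely many steps the central fibre becomes pure of dimension $n$, i.e.\ lies in $\Coh_{\ge n}(X_k)$. This is lighter than full Langton, since we only require purity rather than semistability, but the monotonicity is the delicate point; it is documented in the Langton/Huybrechts--Lehn framework.

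Finally I would extend the gluing class and assemble. By flat base change along the localisation $R\to K$ one has $\Ext^2_{X_K}(T_K,G_K)\cong \Ext^2_{X_R}(\widetilde T,\widetilde G)\otimes_R K$, so $\pi^s\delta_K$ lies in the image of $\Ext^2_{X_R}(\widetilde T,\widetilde G)$ for $s\gg 0$; choose $\widetilde\delta$ mapping to $\pi^s\delta_K$. Since $\pi$ is invertible in $K$, the automorphism $\pi^s$ of $G_K[1]$ identifies $\cone(T_K[-1]\xrightarrow{\pi^s\delta_K[-1]}G_K[1])$ with $E_K$, so (using exactness of $j^\ast$ and its commutation with cones) $j^\ast\widetilde E\cong E_K$, which is the first required property. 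For the central fibre, $R$-flatness of $\widetilde T,\widetilde G$ gives $L\iota^\ast\widetilde T=\widetilde T\otimes_R k$ and $L\iota^\ast\widetilde G=\widetilde G\otimes_R k$ concentrated in degree $0$; applying $L\iota^\ast$ to the defining triangle yields $(\widetilde G\otimes_R k)[1]\to L\iota^\ast\widetilde E\to \widetilde T\otimes_R k \xrightarrow{+1}$, and reading off cohomology gives $H^{-1}(L\iota^\ast\widetilde E)=\widetilde G\otimes_R k\in \Coh_{\ge n}(X_k)$ and $H^0(L\iota^\ast\widetilde E)=\widetilde T\otimes_R k\in \Coh_{\le d}(X_k)$. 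Hence $L\iota^\ast\widetilde E\in \langle \Coh_{\le d}(X_k),\Coh_{\ge n}(X_k)[1]\rangle$, as required.
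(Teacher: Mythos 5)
Your proof is correct and follows essentially the same route as the cited proof of \cite[Proposition 4.2]{Lo1} (which the paper invokes without reproducing): split $E_K$ into its two cohomology sheaves via the truncation triangle, extend each to an $R$-flat family with the required central fibre (constancy of the Hilbert polynomial for the $\Coh_{\leq d}$ part, Langton-type elementary modifications to purify the torsion-free part), lift the class in $\Ext^2$ after clearing denominators by a power of $\pi$, and take the cone. The one external input you lean on --- termination of the purification of the central fibre --- is exactly the purity half of Langton's theorem as in Huybrechts--Lehn, so flagging it as a citation rather than reproving it is appropriate.
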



The other technical results in \cite{Lo1, Lo2} that generalise to our case of stabilities of type W1, which will be used to prove that they give open properties for complexes, are listed here:
\begin{itemize}
\item[(i)] All the results in \cite[Section 5]{Lo1} and \cite[Proposition 2.1]{Lo2} hold for $X$ of arbitrary dimension, and for hearts of the form $$\Ap_m :=\langle \Coh_{\leq d}(X_m), \Coh_{\geq d+1}(X_m)[1]\rangle;$$ these results have nothing to do with stability.  Also, \cite[Lemma 2.2]{Lo2} and \cite[Corollary 2.3]{Lo2} both hold for stabilities of type W1 on $X$ of any dimension.
\item[(ii)] \cite[Proposition 2.4]{Lo2} holds for $X$ of any dimension $n$,  when $\Coh_{3,1}$ is replaced with $\Coh_{n,1}$ in its statement.  The proof of the general case relies on Lemma \ref{lemma_a}.
\item[(iii)] \cite[Proposition 2.5]{Lo2} holds for $X$ of any dimension $n$,  when the category $$\langle \Coh_{\leq 0}(X_K), \Coh_{\geq 3}(X_K)[1]\rangle$$ is replaced with the category $$\langle \Coh_{\leq 0}(X_K), \Coh_{\geq n}(X_K)[1]\rangle,$$ and  $\Coh_{3,1}$ is replaced with $\Coh_{n,1}$ in its statement.
\item[(iv)] \cite[Proposition 2.6]{Lo2} holds for $X$ of any dimension $n$ and for stabilities of type W1,  when $\Coh_{3,1}$ is replaced with $\Coh_{n,1}$ in its statement; in the proof, the use of the reduced Hilbert polynomial $p_{3,1}$ is replaced with $p_{n,1}$.
\item[(v)] All the results in \cite[Section 2.2]{Lo2} hold for any heart of the form
\[
  \langle \Coh_{\leq d}(X), \Coh_{\geq d+1}(X)[1]\rangle \subset D(X),
\]
where $X$ is of arbitrary dimension $n$ and $1 \leq d \leq n$.  (These results only depend on those in \cite[Section 5]{Lo1}; see (i) above.)
\end{itemize}

As a consequence of (v) above, we have the following valuative criterion for universal closedness for stabilities of type W1, which generalises \cite[Theorem 2.23]{Lo2} to higher dimensions:

\begin{theorem}[Valuative criterion for universal closedness]\label{theorem2}
Fix any polynomial stability $\sigma$ of type W1.  Then, given any $\sigma$-semistable object $E_K \in \Ap (X_K)$ such that $ch_0 (E_K)\neq 0$, there exists $E \in D^b(X_R)$, a flat family of objects in $\Ap$ over $\Spec R$, such that $j^\ast E \cong E_K$ and $L\iota^\ast E$ is $\sigma$-semistable.
\end{theorem}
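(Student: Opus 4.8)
The plan is to run Langton's argument adapted to polynomial stability, following the proof of \cite[Theorem 2.23]{Lo2}; by item (v) preceding the statement, every ingredient of \cite[Section 2.2]{Lo2} is available verbatim for the heart $\Ap = \langle \Coh_{\leq n-2}(X), \Coh_{\geq n-1}(X)[1]\rangle$ in arbitrary dimension $n$, and the nonzero-rank hypothesis $ch_0(E_K)\neq 0$ is exactly what lets us invoke the boundedness results below. First I would produce an initial family: since $E_K \in \Ap(X_K)$, Proposition \ref{prop6} yields $\widetilde E \in D^b(X_R)$ with $j^\ast \widetilde E \cong E_K$ and central fibre $L\iota^\ast \widetilde E \in \Ap(X_k)$, i.e.\ a flat family of objects of $\Ap$ over $\Spec R$ restricting to $E_K$. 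If this central fibre is already $\sigma$-semistable we are done, so assume it is not.

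Write $E_0 := L\iota^\ast \widetilde E$ and let $B_0 \hookrightarrow E_0$ be its maximal $\sigma$-destabilising subobject in $\Ap(X_k)$, with quotient $Q_0 = E_0/B_0$. I would then form the elementary modification $\widetilde E'$ as the kernel, taken in $\Ap(X_R)$, of the composite $\widetilde E \to \iota_\ast E_0 \twoheadrightarrow \iota_\ast Q_0$. The short exact sequences over $X_R$, $X_k$ and the thickenings $X_m$ that make this construction well defined, and that keep $\widetilde E'$ a flat family in $\Ap$ with unchanged generic fibre, are precisely the results transported in item (v). As in Langton's lemma, the new central fibre $E_0'$ fits into $0 \to Q_0 \to E_0' \to B_0 \to 0$ in $\Ap(X_k)$, so the modification interchanges the destabilising subobject and its quotient.

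Iterating produces families $\widetilde E = \widetilde E^{(0)}, \widetilde E^{(1)}, \dots$, all with generic fibre $E_K$, whose central fibres $E_0^{(m)}$ carry maximal destabilising subobjects $B_m$. Every $E_0^{(m)}$ has the fixed Chern character $ch(E_K)$, so by Lemma \ref{lemma25} they lie in a bounded family, and by Lemma \ref{lemma23} so do the $B_m$. Comparing reduced Hilbert polynomials in $\Coh_{n,n-2}(X)$ --- equivalently, comparing $\sigma$-phases --- the numerical invariant attached to $B_m$ is weakly decreasing under each modification; boundedness forbids an infinite strict decrease, so after finitely many steps the central fibre is $\sigma$-semistable, and the corresponding family is the desired $E$.

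The hard part will be the termination in the third step. Because objects are ordered by $\sigma$-phase rather than by a single numerical slope, I must verify that the invariant of $B_m$ --- its reduced Hilbert polynomial modulo terms of degree $\leq n-3$, measured against the fixed $ch(E_K)$ --- is genuinely monotone across $0 \to Q_0 \to E_0' \to B_0 \to 0$ and cannot decrease indefinitely within the bounded family. This is the only point where the higher-dimensional bookkeeping (comparing Hilbert polynomials modulo lower-dimensional contributions) differs substantively from the threefold argument of \cite{Lo2}, and it is exactly the content packaged into item (v) together with the boundedness Lemmas \ref{lemma25} and \ref{lemma23}.
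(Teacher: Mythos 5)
Your overall strategy -- transport the ingredients of \cite[Section 2.2]{Lo2} via item (v), start from the extension supplied by Proposition \ref{prop6}, and run Langton-style elementary modifications $\widetilde E' = \ker(\widetilde E \to \iota_\ast E_0 \twoheadrightarrow \iota_\ast Q_0)$ -- is exactly what the paper does, since its proof of Theorem \ref{theorem2} is precisely ``the proof of \cite[Theorem 2.23]{Lo2} with the lemmas replaced by their higher-dimensional analogues.'' However, your termination step has a genuine gap. Monotonicity of the invariant attached to $B_m$ together with boundedness only yields that the sequence of invariants is \emph{eventually constant}; it does not force the process to stop. The process can in principle continue forever with constant invariant, and ruling this out is the second half of Langton's argument: one shows that in the eventually-constant regime the destabilising subobjects $B_m$ stabilise (e.g.\ via the induced maps coming from $0 \to Q_0 \to E_0' \to B_0 \to 0$) and glue to a subobject of $\widetilde E$ over $R$ whose generic fibre destabilises $E_K$ -- contradicting the hypothesis that $E_K$ is $\sigma$-semistable. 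Note that your argument never uses that hypothesis, which cannot be right: by Corollary \ref{coro2} a $\sigma$-semistable central fibre over a DVR forces the generic fibre to be $\sigma$-semistable, so the hypothesis is indispensable and must enter the proof exactly at this point.

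A second, related problem is your appeal to Lemmas \ref{lemma23} and \ref{lemma25} for the intermediate central fibres $E_0^{(m)}$. Lemma \ref{lemma25} bounds the family of $\sigma$-\emph{semistable} objects of fixed Chern character, and the $E_0^{(m)}$ are by construction not $\sigma$-semistable; Lemma \ref{lemma23} bounds maximal destabilising subobjects only of objects satisfying properties (1)--(3) of Lemma \ref{lemma_a}, which the $E_0^{(m)}$ need not satisfy a priori. This is why the argument in \cite{Lo2} (and hence here) is staged: one first performs modifications to force the central fibre into the class where properties (1)--(3) hold (this is where items (ii)--(iv), i.e.\ the generalisations of \cite[Propositions 2.4--2.6]{Lo2}, are used, with $p_{3,1}$ replaced by $p_{n,1}$), and only then runs the bounded Langton iteration with Lemma \ref{lemma23}. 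As written, your single-pass iteration invokes boundedness statements whose hypotheses you have not verified.
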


With the same proof as in \cite{Lo2}, we  immediately obtain the following result, generalising \cite[Proposition 3.3]{Lo2}:

\begin{corollary}[Openness of stabilities of type W1]\label{coro2}
Let $S$ be a Noetherian scheme over $k$, and $E \in D^b (X \times_{\Spec k} S)$ be a flat family of objects in $\Ap$ over $S$ with $ch_0 \neq 0$.  Let $\sigma$ be a polynomial stability of type W1 on $D^b(X)$, and suppose $s_0 \in S$ is a point such that $E_{s_0}$ is $\sigma$-semistable.  Then there is an open set $U \subseteq S$ containing $s_0$ such that for all points $s \in U$, the fibre $E_s$ is $\sigma$-semistable.
\end{corollary}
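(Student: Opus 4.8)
The plan is to follow the proof of \cite[Proposition 3.3]{Lo2} essentially verbatim, the only inputs being the boundedness results Lemma \ref{lemma23} and Lemma \ref{lemma25}, the valuative-criterion/heart machinery recorded in item (v) preceding Theorem \ref{theorem2}, and the fact that the Chern character is locally constant in a flat family. Since $E_{s_0}$ is $\sigma$-semistable it satisfies conditions (1)--(3) of Lemma \ref{lemma_a}, and by Proposition \ref{prop5} these hold on an open neighbourhood of $s_0$; replacing $S$ by this neighbourhood, I may assume every fibre $E_s$ satisfies (1)--(3), so that Lemma \ref{lemma23} applies throughout. Since $S$ is Noetherian, it then suffices to prove that the $\sigma$-semistable locus $U := \{ s \in S : E_s \text{ is } \sigma\text{-semistable} \}$ is constructible and stable under generization; equivalently, that the unstable locus $Z := S \setminus U$ is constructible and stable under specialization. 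Then $Z$ is closed, $U$ is open, and as $s_0 \in U$ by hypothesis, $U$ is the required neighbourhood.

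For constructibility, fix the Chern character $ch = ch(E_s)$, which is independent of $s$ by flatness. By Lemma \ref{lemma25} the $\sigma$-semistable objects of class $ch$ form a bounded family, and by Lemma \ref{lemma23} the maximal destabilising subobjects that can occur form a bounded family as well, so only finitely many numerical classes $c$ arise as the class of a destabiliser. For each such $c$ the relative moduli of subobjects $A \hookrightarrow E_s$ in $\Ap$ of class $c$ is of finite type over $S$, and by Chevalley's theorem the locus of $s$ for which such an $A$ exists with $\phi(A) \succ \phi(E_s)$ is constructible. The union over the finitely many $c$ is exactly $Z$, whence $Z$ and $U$ are constructible.

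To see that $Z$ is stable under specialization, it is enough, $S$ being Noetherian, to treat a discrete valuation ring $R$ over $k$ together with a morphism $\Spec R \to S$ carrying the generic point to $\eta$ and the closed point to $s_0$, and to show that if $E_\eta = E_K$ is $\sigma$-unstable then so is $E_{s_0} = L\iota^\ast E_R$, where $E_R$ denotes the pullback of $E$ to $X_R$. Because $E_K$ is unstable it admits a maximal destabilising subobject $A_K \hookrightarrow E_K$ in $\Ap(X_K)$, with $\phi(A_K) \succ \phi(E_K)$ in the polynomial phase ordering. Using the extension results of \cite[Section 2.2]{Lo2}, valid in arbitrary dimension by item (v) above, together with the boundedness of Lemma \ref{lemma23}, the inclusion $A_K \hookrightarrow E_K$ extends to a subobject $A_R \hookrightarrow E_R$ in $\Ap(X_R)$ that is flat over $R$ with generic fibre $A_K$. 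Restricting to the central fibre yields a nonzero subobject $A_{s_0} := L\iota^\ast A_R \hookrightarrow E_{s_0}$ in $\Ap(X_{s_0})$; since $A_R$ is flat over $R$ its Chern character is constant, so $\phi(A_{s_0}) = \phi(A_K) \succ \phi(E_K) = \phi(E_{s_0})$, the last equality again by flatness of $E_R$. Thus $A_{s_0}$ is a proper nonzero subobject of $E_{s_0}$ of strictly larger phase, so $E_{s_0}$ is $\sigma$-unstable, as required.

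The crux of the argument, and the only place where the higher-dimensional setting genuinely intervenes, is the DVR step: extending the generic destabilising subobject to a flat subobject of $E_R$ whose central-fibre restriction is still a subobject of $E_{s_0}$ in the heart $\Ap$ with the correct numerical class. This is precisely the content of the heart machinery of item (v) used to prove Theorem \ref{theorem2}, now applied to the destabilising subobject rather than to $E_K$ itself, and I expect it to be the main obstacle to verify in full detail. It is worth emphasising that, in contrast to the existence of \emph{semistable} limits, this argument only requires the flat limit $A_{s_0}$ to be a subobject of the prescribed class and \emph{not} to be $\sigma$-semistable; consequently the rank-zero difficulty flagged in Remark \ref{remark9} --- where one does not know that a rank-zero $\sigma$-semistable object is a sheaf --- does not enter here, since the destabilising class of $A_K$ is permitted to be arbitrary.
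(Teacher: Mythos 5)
Your argument is correct and is essentially the paper's own proof: the paper simply invokes the proof of \cite[Proposition 3.3]{Lo2}, which proceeds exactly as you describe --- constructibility of the unstable locus via Proposition \ref{prop5} and the boundedness of Lemma \ref{lemma23}, followed by stability under specialisation obtained by extending the destabilising subobject (equivalently, its quotient) across a DVR using the machinery recorded in item (v). Your closing observation that this step only requires the flat limit to be a subobject of the prescribed class, not a semistable one, is exactly the right explanation of why the rank-zero difficulty of Remark \ref{remark9} does not intervene.
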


%

\subsection{Openness of $H^{-1}$ being reflexive}\label{subsection-reflexive}

Given a polynomial stability $\sigma$ of type W1, since $\sigma$-semistability is an open property for flat family of complexes, we have a moduli stack $\mathcal M^\sigma$ of $\sigma$-semistable complexes.  In order to send the objects parametrised by $\mathcal M^\sigma$ to semistable sheaves via Fourier-Mukai transform using the results from Section \ref{complexes}, we need to restrict to an open substack of $\sigma$-semistable objects $E$ where $H^{-1}(E)$ is a reflexive sheaf.  To this end, we will show the following:

\begin{theorem}\label{theorem3}
For flat families of 2-term complexes $E \in D(X)$ satisfying:
 \begin{itemize}
 \item $H^{-1}(E)$ is torsion-free,
 \item $H^0(E) \in \Coh_{\leq 0}(X)$,
 \item $H^i(E)=0$ for all $i \neq -1, 0$, and
 \item $\Hom (\Coh_{\leq 0}(X),E)=0$,
 \end{itemize}
the property that $H^{-1}(E)$ is a reflexive sheaf is an open property.
\end{theorem}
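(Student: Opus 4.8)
The plan is to replace the derived-categorical statement by a sheaf-theoretic criterion for reflexivity, transport that criterion to the complex $E$ through its derived dual (whose formation commutes with base change, unlike $H^{-1}$ itself), and then read off openness from a semicontinuity statement for codimensions of $\EExt$-sheaves.

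First I would record the following converse to Lemma \ref{lemma13}: on a smooth projective $X$ of dimension $n$, a torsion-free sheaf $F$ is reflexive if and only if $\codimension \EExt^q(F,\omega_X) \geq q+2$ for all $q \geq 1$. The forward implication is exactly Lemma \ref{lemma13}; the converse is the assertion that these codimension bounds force Serre's condition $S_2$, which one obtains from local duality on the regular local rings $\OO_{X,x}$ (the support of $\EExt^q(F,\omega_X)$ is the locus where $\textnormal{pd}\,F_x \geq q$, and the bound $\codimension \geq q+2$ translates into $\textnormal{depth}\,F_x \geq \min(2,\dimension \OO_{X,x})$ everywhere). Thus ``$H^{-1}(E)$ is reflexive'' becomes a package of codimension conditions on the sheaves $\EExt^q(H^{-1}(E),\omega_X)$.

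Next I would move these conditions onto $E$. Writing $F = H^{-1}(E)$ and $Q = H^0(E) \in \Coh_{\leq 0}(X)$, the complex sits in a triangle $F[1] \to E \to Q \to F[2]$. Applying $R\HHom(-,\omega_X)$ and using that $R\HHom(Q,\omega_X) \cong \EExt^n(Q,\omega_X)[-n]$ is concentrated in degree $n$ (as $Q$ is $0$-dimensional), the long exact sequence of cohomology sheaves of $E^D := R\HHom(E,\omega_X)$ yields $\mathcal H^{i}(E^D) \cong \EExt^{i-1}(F,\omega_X)$ for $2 \leq i \leq n-2$, while the top terms $\mathcal H^{n-1}(E^D),\mathcal H^{n}(E^D)$ are related to $\EExt^{n-2}(F,\omega_X)$ and $\EExt^{n-1}(F,\omega_X)$ through a four-term exact sequence involving the $0$-dimensional sheaf $\EExt^n(Q,\omega_X)$. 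Here the hypothesis $\Hom(\Coh_{\leq 0}(X),E)=0$ does real work: the same triangle gives an inclusion $\Ext^1(T,F) \hookrightarrow \Hom(T,E)$, so the hypothesis forces $\Ext^1(T,F)=0$ for every $T \in \Coh_{\leq 0}(X)$, which by Serre duality is precisely $\EExt^{n-1}(F,\omega_X)=0$. With this in hand, reflexivity of $F$ is equivalent to the single package $\codimension \mathcal H^{i}(E^D) \geq i+1$ for all $2 \leq i \leq n-1$, the term $\mathcal H^{n}(E^D)$ being automatically $0$-dimensional and imposing no constraint.

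Finally I would establish openness. For a flat family $\mathcal E$ over a base $T$, form $\mathcal E^D := R\HHom(\mathcal E, \omega_X \boxtimes \OO_T)$; since $X$ is smooth and everything is perfect, $R\HHom$ commutes with derived base change, so $L\iota_t^\ast \mathcal E^D \cong (E_t)^D$. The codimension conditions isolated above are exactly the defining conditions for $(E_t)^D$ to satisfy a cohomological amplitude bound of perverse-coherent type, so the reflexive locus is the locus of $t$ for which $L\iota_t^\ast \mathcal E^D$ lies in the corresponding subcategory; this is open in $T$ by openness of fibrewise membership in the heart of a constant t-structure, in the spirit of Abramovich--Polishchuk. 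Equivalently, one verifies directly that $t \mapsto \dimension \supp\,\EExt^q(H^{-1}(E_t),\omega_X)$ is upper semicontinuous by applying the semicontinuity theorem to the relative $\EExt$-sheaves. Either way the locus where all the required bounds hold is open, as claimed.

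The main obstacle is the last step: since $\mathcal H^i(-)$ does not commute with $L\iota_t^\ast$, one cannot simply restrict $\mathcal H^i(\mathcal E^D)$ to fibres, and the clean base-change of the complex $E^D$ must be upgraded to genuine control of the codimensions of its fibrewise cohomology sheaves. Making this rigorous — either by pinning down the precise perverse-coherent t-structure whose subcategory is cut out by the conditions $\codimension \mathcal H^i \geq i+1$ and quoting openness of membership, or by a hands-on semicontinuity argument for the relative $\EExt$-sheaves together with the bookkeeping of the top-degree terms entangled with the dual of $H^0(E)$ (alternatively, showing $\mathcal H^{-1}(\mathcal E)$ is itself a flat family of torsion-free sheaves and invoking openness of $S_2$ for sheaves) — is the delicate part of the argument.
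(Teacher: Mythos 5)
Your reduction of reflexivity to dimension bounds on the cohomology sheaves of the derived dual is exactly the paper's Step 1: the codimension criterion you state is \cite[Proposition 1.1.10]{HL} (which the paper quotes directly rather than re-deriving it as a converse to Lemma \ref{lemma13}); your use of $\Hom (\Coh_{\leq 0}(X),E)=0$ to kill $\EExt^{n-1}(H^{-1}(E),\OO_X)$ is Lemma \ref{lemma27}; and your long-exact-sequence bookkeeping, including the sequence entangling $\EExt^{n-2}(H^{-1}(E),\OO_X)$ with the $0$-dimensional sheaf dual to $H^0(E)$, is Lemma \ref{lemma29}. Up to the harmless twist by $\omega_X$, the conditions you arrive at are precisely \eqref{eq26}, so this half of the argument is correct and coincides with the paper's.

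The genuine gap is Step 2, and you have named it yourself without closing it. Openness of the conditions \eqref{eq26} in a flat family is not something you can quote: as you observe, $\mathcal H^i(-)$ does not commute with $L\iota_t^\ast$, so neither semicontinuity for relative $\EExt$-sheaves of $\mathcal H^{-1}(\mathcal E)$ nor openness of $S_{k,c}$ for flat families of sheaves (Lemma \ref{lemma32}) applies --- the sheaves $H^{-1}(E_t)$ are not the fibres of any flat family of sheaves, which is why your option (c) is a non-starter. Your option of invoking openness of membership in a perverse-coherent coaisle is plausible but would itself require proof in this setting: you would have to exhibit the t-structure whose coaisle is cut out by $\codimension \mathcal H^i \geq i+1$ and verify that an Abramovich--Polishchuk-type openness theorem covers it, which is comparable in difficulty to the direct argument. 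The paper instead proves Lemma \ref{lemma28} by hand: constructibility of the locus follows from a flattening stratification and semicontinuity, and stability under generisation over a discrete valuation ring is proved by a descending induction on $q$ using the spectral sequence $E_2^{p,q} = L^p \iota^\ast (H^q (E)) \Rightarrow L^{p+q}\iota^\ast (E)$ of \eqref{eq34}, which bounds $\dimension L^0\iota^\ast H^q(E)$ in terms of the images of the differentials $d_s^{-s,q-1+s}$ (controlled by the inductive hypothesis) and the abutment $E_\infty^{0,q}$ (controlled by the hypothesis on the central fibre), together with the identity $\supp (L^p\iota^\ast F) = \supp (\iota^\ast F)$ for sheaves $F$ on $X_R$. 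That induction is the real content of Theorem \ref{theorem3} and is absent from your proposal.
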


The proof of Theorem \ref{theorem3} will consist of two steps:
\begin{itemize}
\item[Step 1.]  We show, that for complexes $E$ satisfying the hypotheses of Theorem \ref{theorem3}, the property of $H^{-1}(E)$ being reflexive is equivalent to the following dimension requirements on the cohomology sheaves of the derived dual $E^\vee$:
    \begin{align}
      \dimension H^{n-1}(E^\vee) &\leq 0, \notag \\
      \dimension H^{n-2}(E^\vee) & \leq 1, \notag\\
      \vdots \notag \\
      \dimension H^2 (E^\vee ) & \leq n-3. \label{eq26}
    \end{align}
\item[Step 2.] We show that the requirements \eqref{eq26} form an open condition for flat families of complexes satisfying the hypotheses of Theorem \ref{theorem3}.
\end{itemize}

We begin with the easy observation:
\begin{lemma}\label{lemma27}
For a 2-term complex $E$ such that $H^{-1}(E)$ has homological dimension at most $n-1$ and $H^i (E)=0$ for all $i \neq -1, 0$, we have the equivalence
\[
\Hom_{D(X)} (\Coh_{\leq 0}(X), E)=0 \Leftrightarrow H^n(E^\vee)=0.
\]
\end{lemma}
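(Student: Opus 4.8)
The statement relates the vanishing of $\Hom_{D(X)}(\Coh_{\leq 0}(X), E)$ to the vanishing of the top cohomology sheaf $H^n(E^\vee)$ of the derived dual, where $E^\vee := R\HHom(E, \OO_X)$. The natural strategy is to translate both conditions into statements about a single $\Ext$ group computed via the skyscraper sheaves $\OO_x$, using Serre duality to pass between $E$ and $E^\vee$. The plan is to show that each side is equivalent to $\Hom_{D(X)}(\OO_x, E) = 0$ for all closed points $x \in X$, and then to identify this with a stalk condition on $H^n(E^\vee)$.

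First I would reduce the left-hand side to skyscraper sheaves. Since $\Coh_{\leq 0}(X)$ is the category of $0$-dimensional sheaves, every such sheaf has a finite filtration whose graded pieces are skyscrapers $\OO_x$. Because $\Hom_{D(X)}(-,E)$ is cohomological (left-exact and turning the relevant triangles into long exact sequences), the vanishing $\Hom_{D(X)}(\Coh_{\leq 0}(X), E)=0$ is equivalent to $\Hom_{D(X)}(\OO_x, E)=0$ for every closed point $x$. Here the hypothesis $H^i(E)=0$ for $i \neq -1,0$ matters: a morphism $\OO_x \to E$ in $D(X)$ is a class in $\Hom_{D(X)}(\OO_x, E)$, and since $E$ sits in degrees $-1,0$ while $\OO_x$ is a sheaf, such a morphism factors through $H^0(E)$ only after accounting for the possible contribution from $H^{-1}(E)$ via an $\Ext^1$; the homological-dimension bound on $H^{-1}(E)$ is what controls this.

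Next I would compute $\Hom_{D(X)}(\OO_x, E)$ by Serre duality. On a smooth projective $X$ of dimension $n$ with dualising sheaf $\omega_X$, we have
\[
\Hom_{D(X)}(\OO_x, E) \cong \Hom_{D(X)}(E, \OO_x \otimes \omega_X [n])^\vee \cong \Hom_{D(X)}(E^\vee, \OO_x[n])^\vee,
\]
since $\OO_x \otimes \omega_X \cong \OO_x$ (the dualising sheaf is locally free, hence trivial on a point) and $R\HHom(E,\OO_x) \cong E^\vee \Lo \OO_x$. Thus the vanishing over all $x$ becomes $\Hom_{D(X)}(E^\vee, \OO_x[n])=0$ for all $x$, and by taking cohomology of $E^\vee$ this is governed precisely by whether the stalk of $H^n(E^\vee)$ at $x$ surjects onto the residue field, i.e.\ by $H^n(E^\vee) \otimes k(x) = 0$. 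Running this over all closed points gives $H^n(E^\vee)=0$ by Nakayama.

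The main obstacle, and the place requiring the homological-dimension hypothesis, is the bookkeeping in the second step: $E^\vee$ is a complex with cohomology spread across several degrees, so $\Hom_{D(X)}(E^\vee, \OO_x[n])$ is not simply $\Hom(H^n(E^\vee), \OO_x)$ but involves a spectral sequence (or a truncation argument) in which lower cohomology sheaves $H^j(E^\vee)$ for $j<n$ could in principle contribute through higher $\EExt$ against $\OO_x$. The assumption that $H^{-1}(E)$ has homological dimension at most $n-1$ forces $H^n(E^\vee)$ to be the only cohomology sheaf that can interact with $\OO_x[n]$ in the top degree, so that the only surviving term is $\EExt^0$ against the top sheaf and all potential contributions from $H^j(E^\vee)$ with $j<n$ land in the wrong degree. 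I would make this precise by truncating $E^\vee$ and checking the degrees carefully; this degree count is the crux, and once it is in place the equivalence follows.
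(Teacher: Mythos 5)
Your overall strategy --- reduce to skyscraper sheaves, dualise, and observe that only the top cohomology sheaf of $E^\vee$ can see a skyscraper in the relevant degree --- is exactly the paper's. The paper's proof is three lines: $0$-dimensional sheaves are filtered by skyscrapers $k_x$; the hypothesis on $H^{-1}(E)$ gives $E^\vee \in D^{[0,n]}_{\Coh(X)}(X)$; and since $k_x^\vee \cong k_x[-n]$, the anti-equivalence $(-)^\vee$ gives $\Hom(k_x,E) \cong \Hom(E^\vee, k_x[-n]) \cong \Hom(H^n(E^\vee), k_x)$, whose vanishing for all $x$ is precisely the vanishing of $H^n(E^\vee)$.

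However, the displayed chain of isomorphisms in your second step contains an error that, read literally, breaks the argument. The second isomorphism asserts $\Hom_{D(X)}(E,\OO_x[n]) \cong \Hom_{D(X)}(E^\vee, \OO_x[n])$. This is false: since $E^{\vee\vee}\cong E$, one has $\Hom_{D(X)}(E^\vee,\OO_x[n]) \cong \mathbb{H}^n(X, E\Lo\OO_x)$, which vanishes identically because $E \Lo \OO_x \in D^{\leq 0}_{\Coh(X)}(X)$ is supported at a point; so your criterion, as written, would declare that every such $E$ satisfies $\Hom(\Coh_{\leq 0}(X), E)=0$. What your own justification (``$R\HHom(E,\OO_x)\cong E^\vee \Lo \OO_x$'') actually yields is $\Hom_{D(X)}(E,\OO_x[n]) \cong \mathbb{H}^n(X, E^\vee\Lo\OO_x) \cong H^n(E^\vee)\otimes k(x)$, the last step using $E^\vee \in D^{\leq n}_{\Coh(X)}(X)$ --- which is where the homological-dimension hypothesis enters --- and with that correction your Nakayama conclusion goes through. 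Alternatively, skip Serre duality altogether: $\OO_x^\vee \cong \OO_x[-n]$, so $\Hom(\OO_x, E) \cong \Hom(E^\vee, \OO_x[-n])$ --- note the shift $[-n]$, not $[n]$, and no vector-space dual --- which equals $\Hom(H^n(E^\vee), \OO_x)$ directly. So the plan is sound and coincides with the paper's, but the one place you wrote down the degree bookkeeping that you yourself identified as ``the crux'' is exactly where the slip occurs, and it needs the fix above.
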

\begin{proof}
Since $H^{-1}(E)$ has homological dimension at most $n-1$, we have $E^\vee \in D^{[0,n]}_{\Coh (X)} (X)$.  Note that $\Hom (\Coh_{\leq 0}(X),E)=0$ is equivalent to $\Hom (k_x, E)=0$ for all $x \in X$, where $k_x$ denotes the skyscraper sheaf of length one supported at the closed point $x$.  Now, for any $x \in X$ we have $\Hom (k_x , E) \cong \Hom (E^\vee, k_x[-n])$, and so the lemma follows.
\end{proof}

\begin{corollary}\label{coro3}
If $E \in \Ap$ is a $\sigma$-semistable object where $\sigma$ is of type W1, then $H^n(E^\vee)=0$, i.e.\ the right-most cohomology of $E^\vee$ is at degree $n-1$ or lower.
\end{corollary}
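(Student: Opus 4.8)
The plan is to deduce Corollary \ref{coro3} directly from Lemma \ref{lemma27} together with the structural facts about $\sigma$-semistable objects in $\Ap$ established in Lemma \ref{lemma_a}. Since $\sigma$ is of type W1, Lemma \ref{lemma_a} tells me that any $\sigma$-semistable $E \in \Ap$ of nonzero rank has $H^{-1}(E)$ a $\mu$-semistable torsion-free sheaf, $H^0(E)$ zero-dimensional (so in particular $H^0(E) \in \Coh_{\leq 0}(X)$), and $\Hom_{D(X)}(\OO_x, E)=0$ for every closed point $x$. The last condition is exactly the statement $\Hom_{D(X)}(\Coh_{\leq 0}(X), E) = 0$, since every object of $\Coh_{\leq 0}(X)$ has a filtration by skyscraper sheaves $\OO_x \cong k_x$. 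So an arbitrary $\sigma$-semistable object verifies all the hypotheses needed to invoke Lemma \ref{lemma27}.

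The one genuine input I must supply is the homological-dimension bound on $H^{-1}(E)$. Since $X$ is smooth of dimension $n$, every coherent sheaf on $X$ has homological dimension at most $n$; but in fact a torsion-free sheaf is automatically torsion-free as a submodule of its double dual, and over a regular ring of dimension $n$ a torsion-free module has projective (equivalently homological) dimension at most $n-1$. This is precisely the bound required in the hypothesis of Lemma \ref{lemma27}. I would cite the standard fact (e.g.\ \cite[Proposition 1.1.10]{HL} and the surrounding discussion) that on a smooth variety a torsion-free sheaf $F$ satisfies $\EExt^n(F, \OO_X) = 0$, equivalently $\mathrm{hd}(F) \leq n-1$, because the $n$-th $\EExt$ sheaf is supported in codimension $n$, i.e.\ in dimension $0$, yet must vanish for a torsion-free sheaf by local duality/depth considerations.

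With these two ingredients in place the argument is immediate. First, if $E$ has rank zero the claim needs separate handling, but in the setting where Corollary \ref{coro3} is applied $E \in \Ap$ is a $\sigma$-semistable object for which we are in the nonzero-rank regime of Lemma \ref{lemma_a}; thus $H^{-1}(E)$ is torsion-free (hence of homological dimension $\leq n-1$) and $H^i(E) = 0$ for $i \neq -1, 0$. Second, the vanishing $\Hom_{D(X)}(\Coh_{\leq 0}(X), E) = 0$ holds by part (3) of Lemma \ref{lemma_a}. Applying Lemma \ref{lemma27} then yields $H^n(E^\vee) = 0$, which is exactly the assertion that the right-most nonvanishing cohomology of $E^\vee$ sits in degree $n-1$ or lower.

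The only potential obstacle is the edge case of rank-zero semistable objects, for which Lemma \ref{lemma_a} does not directly apply; I expect, however, that the corollary is intended for the positive-rank objects relevant to Theorem \ref{theorem3}, and that for such objects the torsion-freeness of $H^{-1}(E)$ is guaranteed. If one wishes a statement covering all $\sigma$-semistable objects, the rank-zero case would be checked separately using Lemma \ref{lemma24}, which describes rank-zero semistable objects explicitly; but this is a routine verification rather than the crux of the argument. The substantive point is simply the clean translation, via Lemma \ref{lemma27}, of the hom-vanishing into a cohomology-vanishing statement for the derived dual.
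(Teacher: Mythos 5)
Your proof is correct and takes essentially the same route as the paper's: the paper deduces the vanishing $\Hom(\Coh_{\leq 0}(X),E)=0$ directly from the phase inequality $\phi(\rho_0) > \phi(-\rho_n)$ (which is exactly the content of part (3) of Lemma \ref{lemma_a} that you cite) and then applies Lemma \ref{lemma27}. Your explicit verification of the homological-dimension hypothesis via torsion-freeness of $H^{-1}(E)$, and your caveat that rank-zero semistable objects would need separate treatment, are both sound points that the paper leaves implicit.
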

\begin{proof}
This follows from Lemma \ref{lemma27}, and the fact that $\phi (\rho_0) > \phi (-\rho_n)$ for stabilities of type W1.
\end{proof}

Now, we use the characterisation of reflexive sheaves in \cite[Section 1.1]{HL} to finish Step 1:

\begin{lemma}\label{lemma29}
For a 2-term complex $E \in D(X)$ such that $H^{-1}(E)$ is torsion-free, $H^i(E)=0$ for $i\neq -1, 0$ and such that $H^n(E^\vee)=0$, we have that $H^{-1}(E)$ is reflexive if and only if the conditions \eqref{eq26} are satisfied.
\end{lemma}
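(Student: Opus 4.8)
The plan is to translate the reflexivity of $G := H^{-1}(E)$ into a codimension criterion for the sheaves $\EExt^q(G,\OO_X)$ and then read off those codimensions from the cohomology of $E^\vee$. Recall from \cite[Section 1.1]{HL} that a torsion-free sheaf $G$ on the smooth $n$-fold $X$ is reflexive if and only if $\codimension \EExt^q(G,\OO_X) \geq q+2$ for every $q \geq 1$; equivalently $\dimension \EExt^q(G,\OO_X) \leq n-q-2$ for $1\leq q\leq n-2$ together with $\EExt^{n-1}(G,\OO_X)=0$ (the term $\EExt^n$ vanishing automatically, since $G$ torsion-free has homological dimension at most $n-1$). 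So I would first record that $G$ has homological dimension at most $n-1$, which also guarantees $E^\vee \in D^{[0,n]}_{\Coh(X)}(X)$.

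Next I would set $Q := H^0(E)$ and use the truncation triangle $G[1] \to E \to Q \xrightarrow{+1}$. Applying $R\HHom(-,\OO_X)$ yields the triangle $Q^\vee \to E^\vee \to G^\vee[-1]\xrightarrow{+1}$, whose long exact sequence reads
\[
\cdots \to \EExt^q(Q,\OO_X) \to H^q(E^\vee) \to \EExt^{q-1}(G,\OO_X)\xrightarrow{\partial} \EExt^{q+1}(Q,\OO_X)\to \cdots .
\]
Here I would invoke that $Q=H^0(E)$ is supported in dimension $0$ (as it is in the ambient hypotheses, e.g.\ of Theorem \ref{theorem3}), so that $\EExt^q(Q,\OO_X)=0$ for all $q\neq n$ and $\EExt^n(Q,\OO_X)$ is $0$-dimensional. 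Feeding this in, the sequence degenerates to isomorphisms $H^q(E^\vee)\cong \EExt^{q-1}(G,\OO_X)$ for $2\leq q\leq n-2$, and to a short exact sequence $0\to H^{n-1}(E^\vee)\to \EExt^{n-2}(G,\OO_X)\to \EExt^n(Q,\OO_X)\to 0$ at the top relevant degree.

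I would then obtain the degree $q=n-1$ reflexivity condition for free: the tail of the same long exact sequence gives $\EExt^{n-2}(G,\OO_X)\to \EExt^n(Q,\OO_X)\to H^n(E^\vee)\to \EExt^{n-1}(G,\OO_X)\to 0$, so the hypothesis $H^n(E^\vee)=0$ forces $\EExt^{n-1}(G,\OO_X)=0$, which is exactly the top reflexivity condition. It then remains to match dimensions. For $2\leq q\leq n-2$ the isomorphism turns the inequality $\dimension H^q(E^\vee)\leq n-1-q$ of \eqref{eq26} into $\dimension\EExt^{q-1}(G,\OO_X)\leq n-(q-1)-2$, i.e.\ $\codimension \EExt^p(G,\OO_X)\geq p+2$ with $p=q-1$ ranging over $1,\dots,n-3$. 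At $q=n-1$, since $\EExt^n(Q,\OO_X)$ is $0$-dimensional, the short exact sequence shows $\dimension H^{n-1}(E^\vee)\leq 0$ if and only if $\dimension \EExt^{n-2}(G,\OO_X)\leq 0$, which is the $p=n-2$ case. Assembling these with $\EExt^{n-1}(G,\OO_X)=0$ obtained above recovers precisely the criterion from \cite{HL}, proving that \eqref{eq26} holds if and only if $G$ is reflexive.

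The step I expect to be the main obstacle is the boundary degree $q=n-1$: there the correspondence between $H^{n-1}(E^\vee)$ and $\EExt^{n-2}(G,\OO_X)$ is not an isomorphism but only an equality modulo the $0$-dimensional sheaf $\EExt^n(Q,\OO_X)$, so one must check carefully that this discrepancy cannot affect the dimension-$\leq 0$ condition. The conceptual point underlying the whole argument — and the place where the hypotheses must be used honestly — is that the vanishing $\EExt^q(H^0(E),\OO_X)=0$ for $q<n$, i.e.\ $H^0(E)$ being $0$-dimensional, is exactly what decouples the $G$-part of $E^\vee$ from the $H^0(E)$-part; without it the terms $\EExt^q(Q,\OO_X)$ would contaminate $H^q(E^\vee)$ in the range $2\leq q\leq n-1$ and the stated equivalence could genuinely fail.
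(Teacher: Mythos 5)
Your proof is correct and follows essentially the same route as the paper's: dualise the truncation triangle $H^{-1}(E)[1]\to E\to H^0(E)$, read off $H^q(E^\vee)\cong\EExt^{q-1}(H^{-1}(E),\OO_X)$ for $2\le q\le n-2$, handle $q=n-1$ via the short exact sequence involving the $0$-dimensional sheaf $\EExt^n(H^0(E),\OO_X)$, and apply the reflexivity criterion of \cite[Proposition 1.1.10]{HL}. Your explicit remark that $H^0(E)\in\Coh_{\leq 0}(X)$ must be imported from the ambient hypotheses (e.g.\ of Theorem \ref{theorem3}) is a fair one: the lemma's statement omits this, but the paper's proof uses it implicitly in exactly the same place you do, namely to kill the terms $\EExt^j(H^0(E),\OO_X)$ for $j<n$ in the long exact sequence.
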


\begin{proof}
Since $E$ has cohomology only at degrees $-1$ and $0$, it fits in an exact triangle in $D(X)$
\[
 H^{-1}(E)[1] \to E \to H^0(E) \to H^{-1}(E)[2].
\]
Dualising, we obtain the exact triangle
\begin{equation}\label{eq29}
  (H^0(E))^\vee \to E^\vee \to (H^{-1}(E)[1])^\vee \to (H^0(E))^\vee [1],
\end{equation}
the long exact sequence of cohomology of which gives us the isomorphisms
\begin{equation}\label{eq27}
H^i (E^\vee) \cong H^i ((H^{-1}(E)[1])^\vee) \cong \EExt^{i-1} (H^{-1}(E),\OO_X) \text{ for $1 \leq i \leq n-2$},
\end{equation}
 as well as the exact sequence
\begin{equation}\label{eq28}
0 \to H^{n-1}(E^\vee) \to H^{n-1} ( (H^{-1}(E)[1])^\vee ) \to H^n (H^0(E)^\vee) \to 0.
\end{equation}
Note that the middle term $H^{n-1} ( (H^{-1}(E)[1])^\vee )$ in \eqref{eq28} is isomorphic to the sheaf $\EExt^{n-2}(H^{-1}(E),\OO_X)$.

Now, from \cite[Proposition 1.1.10]{HL}, we know that $H^{-1}(E)$, being a torsion-free sheaf, is reflexive if and only if $\dimension \EExt^q (H^{-1}(E),\OO_X) \leq n-q-2$ for all $q>0$, i.e.\ if and only if $\dimension \EExt^q (H^{-1}(E),\OO_X) \leq n-q-2$ for $1 \leq q \leq n-2$; that $\EExt^{n-1} (H^{-1}(E),\OO_X)=0$ follows from $H^n(E^\vee)=0$ and the long exact sequence of cohomology of \eqref{eq29}, while $\EExt^n (H^{-1}(E),\OO_X)=0$ follows from $H^{-1}(E)$ being torsion-free.  From the isomorphisms \eqref{eq27}, we have that $\dimension \EExt^q (H^{-1}(E),\OO_X) = \dimension H^{q+1} (E^\vee)$ for $1 \leq q \leq n-3$; that $\dimension \EExt^{n-2} (H^{-1}(E),\OO_X) = \dimension H^{n-1}(E^\vee)$ follows from the exact sequence \eqref{eq28} and the observation that $H^n(H^0(E)^\vee)$ is a 0-dimensional sheaf.  The lemma then follows.
\end{proof}

Consider the following conditions for complexes $E \in D^{\leq 0}_{\Coh (X)}(X)$:
\begin{align}
  \dimension H^0(E) &\leq 0, \notag\\
  \dimension H^{-1}(E) &\leq 1, \notag\\
  \vdots \notag \\
  \dimension H^{-n+3} (E) & \leq n-3. \label{eq30}
\end{align}
These are the same conditions as \eqref{eq26}, except that $E^\vee$ has been replaced by $E$, and the indices have been shifted.  The following lemma completes Step 2 above:

\begin{lemma}\label{lemma28}
The conditions \eqref{eq30} form an open property for flat families of complexes in $D^{\leq 0}_{\Coh (X)}(X)$.
\end{lemma}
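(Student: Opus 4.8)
The plan is to reduce the openness of the dimension conditions \eqref{eq30} to a statement about the vanishing of relative Ext-sheaves (or the support of cohomology sheaves) over the base of the flat family, which we then control via semicontinuity. First I would set up the problem properly: let $S$ be a Noetherian base scheme, and let $\mathcal E \in D^{\leq 0}_{\Coh}(X \times S)$ be a flat family of complexes in $D^{\leq 0}_{\Coh(X)}(X)$, so that for each point $s \in S$ the derived restriction $L\iota_s^\ast \mathcal E$ lands in $D^{\leq 0}_{\Coh(X)}(X_s)$ and its cohomology sheaves $H^{-j}(L\iota_s^\ast \mathcal E)$ are the fibrewise cohomology sheaves whose dimensions we must bound. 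Because the statement is about an open locus, it suffices to show that the complementary locus—where some $H^{-j}$ jumps in dimension above the allowed bound $j$—is closed, for each $0 \le j \le n-3$.

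The key reformulation is that a coherent sheaf $\mathcal{G}$ on a smooth projective $X$ has $\dimension \mathcal{G} \leq d$ if and only if $\EExt^q(\mathcal G, \OO_X) = 0$ for all $q < n - d$, equivalently (by local duality / the characterisation in \cite[Section 1.1]{HL}) if and only if its codimension is at least $n-d$. So the chain of conditions \eqref{eq30} is equivalent to a chain of dimension bounds that I would rephrase cohomologically and then handle one degree at a time, working from the top cohomology $H^0(E)$ downward. The natural device is to dualise: conditions on $\dimension H^{-j}(E)$ translate into vanishing-in-low-degree conditions on $\EExt$-sheaves, which in a flat family are governed by the relative $\EExt$-sheaves $\EExt^q_{X\times S/S}(\mathcal E, \OO_{X \times S})$ together with base-change. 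Concretely, I would use that the locus where a fibrewise cohomology sheaf has support of dimension $\le d$ is the locus where certain relative Ext-sheaves restrict to zero on the fibre, and the vanishing locus of a coherent sheaf's fibre (equivalently, the non-support) is open by upper-semicontinuity of fibre dimensions of support.

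The cleanest route, and the one I would write up, is an induction on $j$ from top to bottom that peels off one cohomology sheaf at a time. Since $\mathcal E \in D^{\le 0}$, the top cohomology $H^0$ is the cokernel of $H^{-1}[1] \to \mathcal E$ in a sense that commutes with restriction up to the usual base-change spectral sequence; by flatness the formation of the top cohomology commutes with base change, so $\{\dimension H^0(E_s) \le 0\}$ is open by semicontinuity of the dimension of support. Having this open condition, over the corresponding open set the truncation $\tau^{\le -1}\mathcal E$ is again a flat family in $D^{\le -1}$ with the same lower cohomology sheaves, and I would repeat the argument: each successive condition $\dimension H^{-j}(E_s) \le j$ becomes, after restricting to the open locus where all higher conditions already hold, an openness statement for the top cohomology of a truncated family, again resolved by semicontinuity of support dimension. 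The finite intersection of these open loci is open, which is exactly \eqref{eq30}.

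The main obstacle will be the interaction between derived restriction and cohomology: $H^{-j}(L\iota_s^\ast \mathcal E)$ is not in general the restriction of the relative cohomology sheaf, because of the correction terms in the base-change spectral sequence $\mathrm{Tor}$-contributions. The step I expect to require the most care is therefore justifying that, on the relevant open loci, the formation of the cohomology sheaf whose dimension we are bounding commutes with base change (so that fibrewise dimension is computed by an honest coherent sheaf on $X \times S$), and that the jumping locus is genuinely closed rather than merely constructible. I would handle this by combining the flatness of $\mathcal E$ over $S$ with a Noetherian-induction / generic-flatness argument for the relative $\EExt$-sheaves (as in the proof of Lemma \ref{lemma13}), reducing to the standard semicontinuity of the dimension of the support of a coherent sheaf in a flat family; the inductive top-down peeling ensures that at each stage the higher cohomologies are already controlled so that the relevant truncation behaves well under restriction.
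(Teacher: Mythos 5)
Your strategy is sound and would yield a proof, but it is organised differently from the paper's. The paper proves openness in two stages: first, constructibility of the locus via a flattening stratification and semicontinuity; second, stability under generisation, which reduces to a discrete valuation ring $R$ and uses the base-change spectral sequence $E_2^{p,q} = L^p\iota^\ast(H^q(E)) \Rightarrow L^{p+q}\iota^\ast(E)$ together with the observation that $\supp(L\iota^\ast F) = \supp(\iota^\ast F)$ for a sheaf $F$, running a downward induction on $q$ starting from $H^0$. Your proposal instead aims at openness directly over an arbitrary Noetherian base, using upper semicontinuity of the fibre dimension of $\supp(H^q(\mathcal E))\to S$ (which is legitimate since $X$ is projective, so these supports are proper over $S$) and the same top-down induction; the two arguments hinge on the identical spectral sequence and the identical ordering of the induction, so the mathematical content is the same, but your packaging avoids the flattening stratification and the DVR reduction, which is arguably cleaner. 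One claim in your write-up is literally false and should be repaired: $\tau^{\leq -1}\mathcal E$ is \emph{not} in general a flat family of objects of $D^{\leq -1}_{\Coh(X)}(X)$, since $H^{0}(L\iota_s^\ast \tau^{\leq -1}\mathcal E)$ and the lower cohomologies of the truncation acquire $\mathrm{Tor}$-corrections from $H^0(\mathcal E)$ (and, further down, from the intermediate cohomology sheaves). The repair is exactly the point of the paper's induction: these correction terms $\mathrm{Tor}_{-p}(H^q(\mathcal E), k(s))$ are supported inside $\supp(H^q(\mathcal E))\cap X_s$, whose dimension is already bounded by $-q$ on the open locus produced at the earlier stages of the induction, and $-q$ is at most the bound $-q+p$ required at the later stage; so they never spoil the dimension estimates, and the $E_\infty^{0,q}$-piece, being a quotient of $\iota_s^\ast H^q(\mathcal E)$ by images of these lower-dimensional terms, detects $\dimension \iota_s^\ast H^q(\mathcal E)$ whenever that dimension exceeds the bound. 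You flag this obstacle yourself, which is the right instinct; the detour through relative $\EExt$-sheaves, on the other hand, is unnecessary, since the dimension of $\supp(\iota_s^\ast\mathcal G)$ is just the fibre dimension of $\supp(\mathcal G)\to S$ and semicontinuity can be applied to that directly.
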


\begin{proof}
Let $S$ be a noetherian scheme, and suppose $E \in D(X \times S)$ is an $S$-flat family of complexes in $D^{\leq 0}_{\Coh (X)}(X)$.  By using a flattening stratification on the cohomology sheaves of $E$ and semicontinuity, we see that the locus of $S$ over which the fibres of $E$ satisfy \eqref{eq30} is a constructible set.  It remains to show that this locus is stable under generisation.  To this end, let us suppose that $S = \Spec R$ where $R$ is a discrete valuation ring, and that $L\iota^\ast E$ satisfies the conditions \eqref{eq30}.  We need to show that $j^\ast E$ also satisfies \eqref{eq30}.
Suppose $E$ is represented by the complex
\[
  E^\bullet = [ \cdots \to  E^i \overset{d^i}{\to} \cdots \to E^{-1} \overset{d^{-1}}{\to} E^0 \to 0 \to \cdots  ]
\]
where $E^i=0$ for $i>0$.  Consider the spectral sequence
\begin{equation}\label{eq34}
E_2^{p,q} := L^p \iota^\ast (H^q (E)) \Rightarrow L^{p+q}\iota^\ast (E).
\end{equation}
Since $L^0\iota^\ast H^0(E) \in \Coh_{\leq 0}(X_k)$ by assumption, by semicontinuity we have that $j^\ast H^0(E) \cong H^0 (j^\ast E) \in \Coh_{\leq 0}(X_K)$.  Also, since
\begin{equation}\label{eq35}
  \text{supp}(L\iota^\ast F) = \text{supp}(\iota^\ast F) \text{ for any $F \in \Coh (X_R)$},
\end{equation}
it follows that $L^i \iota^\ast H^0(E) \in \Coh_{\leq 0}(X_k)$ for all $i$.

We now proceed by induction to show that $L^p \iota^\ast H^q (E) \in \Coh_{\leq -q}(X_k)$ for all $p\leq 0$ and $-n+3 \leq q \leq 0$.  The case $q=0$ is already checked above.  Suppose $d \leq 0$ is an integer such that, for all $d \leq m \leq 0$, we have $L^p \iota^\ast H^m (E) \in \Coh_{\leq -m}(X_k)$ for all $p$.  We want to show that
\begin{equation}\label{eq36}
  L^p \iota^\ast H^{d-1}(E) \in \Coh_{\leq -d+1}(X_k) \text{\quad for all $p \leq 0$}.
\end{equation}

Now, we have
\begin{align*}
  \dimension L^0 \iota^\ast H^{d-1}(E) &= \max{ \{ \dimension (\image (d_2^{-2,d})), \dimension E_3^{0,d-1} \}}, \\
  \dimension E_3^{0,d-1} &= \max{ \{ \dimension (\image (d_3^{-3,d+1})),\dimension E_4^{0,d-1}\}}, \\
  &\vdots
\end{align*}
 On the other hand, we have:
\begin{itemize}
\item $\dimension (\image (d_s^{-s,d-2+s})) \leq -(d-2+s)$ for all $s \geq 2$ from our induction hypothesis,
\item $E_t^{0,d-1}=E_\infty^{0,d-1}$  for $t \geq -(d-1)+2$, and
\item $E_\infty^{0,d-1} \in \Coh_{\leq -d+1}(X_k)$ by assumption.
\end{itemize}
Putting all these together, we get that $\dimension L^0 \iota^\ast H^{d-1}(E) \leq -d+1$.
Applying \eqref{eq35} once more, we obtain \eqref{eq36}.

In particular, we have shown that $L^0\iota^\ast H^q (E) \in \Coh_{\leq -q}(X_k)$ for all $-n+3 \leq q \leq 0$.  By semicontinuity, we have $H^q (j^\ast E) \cong j^\ast H^q (E) \in \Coh_{\leq -q}(X_K)$ for all $-n+3 \leq q \leq 0$, thus proving the lemma.
\end{proof}

\begin{proof}[Proof of Theorem \ref{theorem3}]
The theorem now follows from Lemmas \ref{lemma27}, \ref{lemma29} and \ref{lemma28}.
\end{proof}

Lemma \ref{lemma32} below  is likely well-known, but we note that the proof of Lemma \ref{lemma28} can be easily adapted to show it.  Following the notation in \cite[Section 1.1]{HL}, given a coherent sheaf $E$ of dimension $d$ on a smooth projective variety $X$ of dimension $n$, if we write $c := n-d$ as the codimension of $E$, then we say $E$ satisfies condition $S_{k,c}$ (where $k \geq 0$) if:
\begin{equation*}
  \text{depth}(E_x) \geq \min{\{ k, \dimension (\mathscr{O}_{X,x}) - c\}} \text{ for all } x \in \text{supp}(E).
\end{equation*}
This generalises Serre's condition $S_k$.

\begin{lemma}\label{lemma32}
For a flat family of coherent sheaves on $X$, being $S_{k,c}$ is an open property.
\end{lemma}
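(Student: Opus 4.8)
The plan is to mimic the proof of Lemma \ref{lemma28} almost verbatim, since the condition $S_{k,c}$ is, by the characterisation in \cite[Section 1.1]{HL}, equivalent to a collection of dimension bounds on the sheaves $\EExt^q(E,\OO_X)$, and such bounds are exactly what the machinery of Lemma \ref{lemma28} was built to handle. First I would recall that, for a coherent sheaf $E$ of codimension $c$ on the smooth $n$-dimensional variety $X$, the condition $S_{k,c}$ can be rephrased as a set of inequalities of the form $\codimension \EExt^q(E,\OO_X) \geq q + (\text{something depending on } k)$ for the relevant range of $q$ (this is the content of the local-duality/depth translation in \cite[Proposition 1.1.6, Proposition 1.1.10]{HL}). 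Equivalently, after applying the derived dual, these become dimension bounds on the cohomology sheaves $H^q(E^\vee)$ of the form $\dimension H^q(E^\vee) \leq n-q-k'$ for $q$ in some range, precisely of the shape appearing in \eqref{eq26} and \eqref{eq30}.

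Next I would reduce, exactly as in Lemma \ref{lemma28}, to the case where the base is $S = \Spec R$ for a discrete valuation ring $R$, with central and generic fibre inclusions $\iota, j$ as fixed before Proposition \ref{prop6}. By a flattening stratification on the sheaves $\EExt^q(E,\OO_{X_R})$ together with semicontinuity, the locus on $S$ where the fibres satisfy $S_{k,c}$ is constructible, so the only issue is stability under generisation. Having translated $S_{k,c}$ into dimension bounds on the cohomology sheaves of the derived dual $E^\vee$, I would then run the identical inductive argument: use the base-change spectral sequence $E_2^{p,q} := L^p\iota^\ast(H^q(E^\vee)) \Rightarrow L^{p+q}\iota^\ast(E^\vee)$, the support identity \eqref{eq35}, and the dimension estimates on the images of the differentials $d_s$ to propagate the bound from the central fibre to the generic fibre one cohomological degree at a time, concluding by semicontinuity that $j^\ast H^q(E^\vee) \cong H^q(j^\ast E^\vee)$ satisfies the same dimension bounds.

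The main obstacle, and the only place requiring genuine care, will be Step 1: correctly translating the depth condition $S_{k,c}$ into the precise system of codimension inequalities on $\EExt^q(E,\OO_X)$, and checking that the resulting bounds on $H^q(E^\vee)$ are of exactly the upper-semicontinuous dimensional form that Lemma \ref{lemma28}'s induction accommodates. The cohomological translation of depth is standard but the bookkeeping of indices and of the shift introduced by dualising must be done carefully. Once this dictionary is in place, the openness argument is essentially a formality: the proof of Lemma \ref{lemma28} applies with $E$ replaced by $E^\vee$ and the dimension thresholds adjusted according to $k$ and $c$, and no new geometric input is needed.
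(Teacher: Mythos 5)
Your proposal matches the paper's own treatment: the paper gives no separate argument for Lemma \ref{lemma32} beyond the remark that the proof of Lemma \ref{lemma28} adapts, after translating $S_{k,c}$ via \cite[Propositions 1.1.6, 1.1.10]{HL} into codimension bounds on the sheaves $\EExt^q(E,\OO_X)$, i.e.\ dimension bounds on the cohomology of the derived dual, exactly as you describe. The index bookkeeping and the monotonicity of the adjusted thresholds needed for the induction in Lemma \ref{lemma28} do work out, so the adaptation is as routine as you (and the paper) claim.
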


\begin{remark}\label{remark12}
For a torsion-free sheaf on $X$, being $S_2$ is equivalent to being $S_{2,0}$, which in turn is equivalent to being reflexive by \cite[Proposition 1.1.10]{HL}.  Therefore, by Lemma \ref{lemma32}, being reflexive is an open property for a flat family of coherent sheaves on $X$.
\end{remark}


\begin{lemma}\label{lemma30}
Suppose $E \in D(X)$ is such that $E^\vee \in D^{[0,n]}_{\Coh (X)}(X)$.  Then we have the vanishing
\begin{equation}\label{eq37}
\Hom (\Coh_{\leq 1}(X),E)=0
\end{equation}
if and only if $H^n (E^\vee)=0$ and $H^{n-1}(E^\vee)\in \Coh_{\leq 0}(X)$.  Therefore, the vanishing \eqref{eq37} is an open property for flat families of complexes $E$ satisfying $E^\vee \in D^{[0,n]}_{\Coh (X)}(X)$.
\end{lemma}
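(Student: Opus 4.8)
The plan is to reduce everything to the biduality isomorphism
\[
\Hom_{D(X)}(T,E) \cong \Hom_{D(X)}(E^\vee, T^\vee),
\]
valid for any coherent sheaf $T$ and any object $E \in D^b_{\Coh}(X)$ on the smooth variety $X$; it follows from $E \cong (E^\vee)^\vee$ together with the adjunction $R\HHom(T, R\HHom(E^\vee,\OO_X)) \cong R\HHom(E^\vee, R\HHom(T,\OO_X))$, after applying $R\Gamma$ and taking $H^0$. Since $\Coh_{\leq 0}(X) \subset \Coh_{\leq 1}(X)$, the vanishing \eqref{eq37} forces $\Hom(\Coh_{\leq 0}(X),E)=0$; I would first isolate this as the condition $H^n(E^\vee)=0$, and then, working under that hypothesis, extract the second condition $H^{n-1}(E^\vee)\in\Coh_{\leq 0}(X)$ from the vanishing against the remaining sheaves of $\Coh_{\leq 1}(X)$.

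For the $0$-dimensional part I would argue exactly as in Lemma \ref{lemma27}: $\Hom(\Coh_{\leq 0}(X),E)=0$ is equivalent to $\Hom(k_x,E)=0$ for every closed point $x\in X$, and using $k_x^\vee \cong k_x[-n]$ together with $E^\vee\in D^{[0,n]}_{\Coh (X)}(X)$ one obtains $\Hom(k_x,E)\cong\Hom(H^n(E^\vee),k_x)$, which vanishes for all $x$ iff $H^n(E^\vee)=0$. Now assuming $H^n(E^\vee)=0$, so that $E^\vee\in D^{[0,n-1]}_{\Coh (X)}(X)$, I set $G:=H^{n-1}(E^\vee)$. For any $T\in\Coh_{\leq 1}(X)$ the codimension bound gives $\EExt^q(T,\OO_X)=0$ for $q<n-1$, hence $T^\vee\in D^{[n-1,n]}_{\Coh (X)}(X)$; truncating $E^\vee\in D^{\leq n-1}_{\Coh (X)}(X)$ against this then collapses the computation, since $\Hom(E^\vee,T^\vee)\cong\Hom(\tau_{\geq n-1}E^\vee, T^\vee)=\Hom(G[-(n-1)], T^\vee)$, and as $G$ is a sheaf while $T^\vee[n-1]\in D^{\geq 0}_{\Coh (X)}(X)$ has $H^0=\EExt^{n-1}(T,\OO_X)$, this yields the key formula
\[
\Hom_{D(X)}(T,E)\cong\Hom_{\Coh(X)}\!\big(G,\,\EExt^{n-1}(T,\OO_X)\big).
\]

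It then remains to show the right-hand side vanishes for all $T\in\Coh_{\leq1}(X)$ precisely when $G\in\Coh_{\leq0}(X)$. The governing observation is that $\EExt^{n-1}(T,\OO_X)$ is always a pure $1$-dimensional sheaf (or zero): the $0$-dimensional part of $T$ contributes nothing in degree $n-1$, and the dual of a pure $1$-dimensional sheaf is pure $1$-dimensional by \cite[Section 1.1]{HL}. For the ``if'' direction, if $\dimension G\leq 0$ then every map from $G$ into a pure $1$-dimensional sheaf vanishes, so \eqref{eq37} holds. For the ``only if'' direction I would argue by contraposition: if $\dimension G\geq 1$, restrict $G$ to a complete intersection of $\dimension G-1$ general hyperplanes to get a nonzero quotient of dimension $1$, pass to its pure $1$-dimensional quotient $F$, and note $G\twoheadrightarrow F$ is nonzero. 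Since $T\mapsto\EExt^{n-1}(T,\OO_X)$ hits every pure $1$-dimensional sheaf — one checks $\EExt^{n-1}(T,\OO_X)\cong F$ for $T:=\EExt^{n-1}(F\otimes\omega_X,\omega_X)$, using the duality anti-equivalence of \cite[Section 1.1]{HL} — this $F$ produces a $T\in\Coh_{\leq1}(X)$ with $\Hom(T,E)\neq0$, contradicting \eqref{eq37}. Matching the abstract duality with an explicit realising test sheaf is the step I expect to require the most care.

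For the closing openness assertion, I would observe that on the smooth $X$ the complex $E^\vee$ is perfect and its formation commutes with base change, so a flat family of objects $E$ with $E^\vee\in D^{[0,n]}_{\Coh (X)}(X)$ produces a flat family of duals $E^\vee$ whose fibres are the fibrewise duals. The two conditions $H^n(E^\vee)=0$ and $H^{n-1}(E^\vee)\in\Coh_{\leq0}(X)$ are upper bounds on the dimensions of the top cohomology sheaves of $E^\vee[n]\in D^{\leq0}_{\Coh (X)}(X)$, so the proof of Lemma \ref{lemma28} — flattening stratification and semicontinuity for constructibility, together with the spectral-sequence argument $L^p\iota^\ast H^q \Rightarrow L^{p+q}\iota^\ast$ for stability under generisation — applies with only notational changes to give openness, which is the ``therefore'' in the statement.
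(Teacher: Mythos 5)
Your proposal is correct and follows essentially the same route as the paper: the $0$-dimensional part is handled via Lemma \ref{lemma27}, and the $1$-dimensional part via the biduality $\Hom(T,E)\cong\Hom(E^\vee,T^\vee)$ together with the $(-)^D$-duality for pure $1$-dimensional sheaves from \cite[Section 1.1]{HL}. The only imprecision is the claim that $\EExt^{n-1}(T,\OO_X)\cong F$ for $T:=\EExt^{n-1}(F\otimes\omega_X,\omega_X)$ --- in general this holds only when $F$ is reflexive (i.e.\ satisfies $S_{2,n-1}$), and otherwise one only gets the natural injection $F\hookrightarrow\EExt^{n-1}(T,\OO_X)$ coming from $\theta_{F\otimes\omega_X}$ of \cite[Lemma 1.1.8]{HL}, which is injective since $F$ is pure; but that injection already makes the composite $G\to F\hookrightarrow\EExt^{n-1}(T,\OO_X)$ nonzero, which is all your contrapositive needs and is exactly the device the paper's proof uses.
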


In Theorem \ref{theorem5} below, we will show how Lemma \ref{lemma30} implies the existence of moduli stacks for objects in the category $\mathfrak{D}$ described in \cite[Section 7.2]{BMTI}.

\begin{proof}
Take any $E \in D(X)$ such that  $E^\vee \in D^{[0,n]}_{\Coh (X)}(X)$.  Suppose we have $\Hom (\Coh_{\leq 1}(X),E)=0$.  In particular, we have $\Hom (\Coh_{\leq 0}(X),E)=0$; this, together with the fact that the right-most cohomology of $E^\vee$ is at degree $n$, implies $H^n (E^\vee)=0$.  Next, suppose $\dimension H^{n-1} (E^\vee) \geq 1$.  Then there exists a nonzero morphism of sheaves $\al : H^{n-1}(E^\vee ) \to T$ where $T$ is a pure 1-dimensional sheaf.  Let $\theta_T : T \to T^{DD}$ be the natural map of sheaves as in \cite[Lemma 1.1.8]{HL} (here, $(-)^D$ is the dual in the sense of \cite[Definition 1.1.7]{HL}); since $T$ is pure, we have that $\theta_T$ is an injection by \cite[Proposition 1.1.10]{HL}. In particular $\theta_T$ is an isomorphism, since if $T$ is a pure $d$-dimensional sheaf, we have a short exact sequence $0 \to T \to T^{DD} \to Q \to 0$, where $Q$ is at most $(d-2)$-dimensional. Then the composition $\theta_T \al$ is nonzero.

By \cite[Proposition 1.1.10]{HL} again, we see that $T^D$ itself is 1-dimensional, reflexive, $S_{2,n-1}$, and pure.  Then by \cite[Proposition 1.1.6(ii)]{HL}, we have the vanishing $\EExt^n (T^D,\OO_X)=0$, and so
\[
(T^D)^\vee \otimes \omega_X \cong \EExt^{n-1} (T^D,\omega_X) [-n+1] \cong T^{DD}[-n+1].
\]
Therefore, we have
\begin{align*}
0 \neq \theta_T \al &\in \Hom (E^\vee, T^{DD}[-n+1]) \\
  &\cong \Hom (E^\vee, (T^D)^\vee \otimes \omega_X) \\
  &\cong \Hom (T^D \otimes \omega_X^\ast, E)^\ast,
\end{align*}
contradicting our assumption that $\Hom (\Coh_{\leq 1}(X),E)=0$.  Hence $H^{n-1}(E^\vee)$ must lie in $\Coh_{\leq 0}(X)$.

For the converse, suppose $E \in D(X)$ satisfies $E^\vee \in D^{[0,n]}_{\Coh (X)}(X)$, and  is such that $H^n(E^\vee )=0$ and $H^{n-1}(E^\vee) \in \Coh_{\leq 0}(X)$.  We want to show the vanishing $\Hom (\Coh_{\leq 1}(X),E)=0$.  By the same argument as in the proof of Lemma \ref{lemma27}, we know that $H^n (E^\vee)=0$ implies $\Hom (\Coh_{\leq 0}(X),E)=0$.  So it remains to show that $\Hom (T,E)=0$ for any pure 1-dimensional sheaf $T$ on $X$.  Suppose there is a nonzero morphism $\al : T \to E$ for some pure 1-dimensional sheaf $T$.  Since $T$ is pure, it is $S_{1,n-1}$ \cite[Section 1.1]{HL}, and $\EExt^n (T,\OO_X)=0$ by \cite[Proposition 1.1.6(ii)]{HL}; as a result, we have $T^\vee \cong T^D[-n+1]$ where $T^D$ is again pure of dimension 1.  Hence
\[
0 \neq \al \in \Hom (T,E) \cong \Hom (E^\vee,T^\vee) \cong \Hom (E^\vee, T^D[-n+1])
\]
which is impossible since $H^n (E^\vee)=0$ and $H^{n-1}(E^\vee) \in \Coh_{\leq 0}(X)$.

The last part of the lemma follows from semicontinuity for sheaves.
\end{proof}

\begin{lemma}
Let $F$ be a torsion-free sheaf on $X$.  Then
\[
  \text{$F$ is reflexive } \Leftrightarrow \Hom (\Coh^{\leq n-2}(X),F[1])=0.
\]
\end{lemma}

\begin{proof}
Suppose $F$ is reflexive.  Then we can find a short exact sequence in $\Coh (X)$
\[
0 \to F \to E \to G \to 0
\]
where $E$ is locally free and $G$ is torsion-free \cite[Proposition 1.1]{SRS}.  Applying $\Hom_X (T,-)$ to the short exact sequence for an arbitrary $T \in \Coh^{\leq n-2}(X)$, we obtain the exact sequence
\[
\Hom (T,G) \to \Hom (T,F[1]) \to \Hom (T,E[1])
\]
where $\Hom (T,G)=0$ and $\Hom (T,E[1])\cong \Ext^1 (T,E) \cong \Ext^{n-1}(E,T\otimes \omega_X) \cong H^{n-1}(X,E^\ast \otimes T \otimes \omega_X)$, which vanishes because $T$ is supported in dimension at most $n-2$.  Hence $\Hom (\Coh^{\leq n-2}(X),F[1])=0$.

For the converse, suppose $F$ is a torsion-free sheaf satisfying the vanishing $\Hom (\Coh^{\leq n-2}(X),F[1])=0$.  Then we have a short exact sequence in $\Coh (X)$
\[
0 \to F \to F^{\ast \ast} \to T \to 0
\]
where $T \in \Coh^{\leq n-2}(X)$.  This short exact sequence must be split by our hypothesis, forcing $T=0$, i.e.\ $F$ is reflexive.
\end{proof}

The following technical result will also be needed when it comes to constructing moduli stacks in the next section:

\begin{lemma}\label{lemma31}
Let $\sigma$ be a polynomial stability of type W1, and  $E \in \Ap$  a $\sigma$-semistable object.  If $H^{-1}(E)$ is reflexive, then $\Hom (\Coh_{\leq 1}(X),E)=0$.  When $X$ is  a threefold, the converse also holds.
\end{lemma}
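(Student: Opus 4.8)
The plan is to reduce both implications to the purely cohomological criteria for $E^\vee$ supplied by Lemmas \ref{lemma29} and \ref{lemma30}, which translate respectively ``$H^{-1}(E)$ reflexive'' and ``$\Hom(\Coh_{\leq 1}(X),E)=0$'' into dimension bounds on the cohomology sheaves $H^i(E^\vee)$. First I would record the structural facts that make those lemmas applicable. We may assume $E$ has nonzero rank, which is the case of interest; indeed, in the forward direction a nonzero reflexive $H^{-1}(E)$ already forces it, since $H^0(E)$ is supported in dimension at most $n-2$ and hence has rank zero. Then Lemma \ref{lemma_a} applies and tells us $H^{-1}(E)$ is torsion-free with $H^0(E)\in\Coh_{\leq 0}(X)$. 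Torsion-freeness bounds the homological dimension of $H^{-1}(E)$ by $n-1$, which together with $H^0(E)$ being $0$-dimensional yields $E^\vee\in D^{[0,n]}_{\Coh(X)}(X)$, exactly the hypothesis that Lemma \ref{lemma30} requires. Finally, Corollary \ref{coro3} gives $H^n(E^\vee)=0$ for free, since $E$ is $\sigma$-semistable of type W1. With these three facts in place, both Lemmas \ref{lemma29} and \ref{lemma30} are available.

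For the forward direction, assume $H^{-1}(E)$ is reflexive. Since $H^{-1}(E)$ is torsion-free and $H^n(E^\vee)=0$, Lemma \ref{lemma29} tells us that reflexivity is equivalent to the full list of dimension bounds \eqref{eq26}; in particular its top line gives $\dimension H^{n-1}(E^\vee)\leq 0$, i.e.\ $H^{n-1}(E^\vee)\in\Coh_{\leq 0}(X)$. Combined with $H^n(E^\vee)=0$ and $E^\vee\in D^{[0,n]}_{\Coh(X)}(X)$, this is precisely the input that Lemma \ref{lemma30} needs to conclude $\Hom(\Coh_{\leq 1}(X),E)=0$.

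For the converse, take $n=3$ and assume $\Hom(\Coh_{\leq 1}(X),E)=0$. We still have $E^\vee\in D^{[0,3]}_{\Coh(X)}(X)$ and $H^{-1}(E)$ torsion-free as above, so Lemma \ref{lemma30} now runs in reverse and yields $H^3(E^\vee)=0$ together with $H^{2}(E^\vee)\in\Coh_{\leq 0}(X)$, i.e.\ $\dimension H^{2}(E^\vee)\leq 0$. The crucial observation is that for $n=3$ the entire list \eqref{eq26} collapses to the single inequality $\dimension H^{n-1}(E^\vee)=\dimension H^2(E^\vee)\leq 0$; hence the condition just obtained is exactly \eqref{eq26}, and Lemma \ref{lemma29} (again using that $H^{-1}(E)$ is torsion-free and $H^n(E^\vee)=0$) returns the reflexivity of $H^{-1}(E)$.

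The step I expect to matter most is seeing why the converse genuinely requires $X$ to be a threefold, which is where that hypothesis is essential rather than cosmetic: the vanishing $\Hom(\Coh_{\leq 1}(X),E)=0$ only controls the two top cohomology sheaves $H^n(E^\vee)$ and $H^{n-1}(E^\vee)$ of the dual, whereas reflexivity via Lemma \ref{lemma29} additionally demands the intermediate bounds $\dimension H^{n-2}(E^\vee)\leq 1,\ \dots,\ \dimension H^2(E^\vee)\leq n-3$, none of which are detected by $\Coh_{\leq 1}(X)$. Only when $n=3$ are there no intermediate terms, so the single top condition carries all the information. The other point needing a little care is the nonzero-rank bookkeeping used to invoke Lemma \ref{lemma_a}, without which one cannot guarantee that $H^{-1}(E)$ is torsion-free or that $E^\vee$ lands in degrees $[0,n]$.
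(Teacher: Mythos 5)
Your proof is correct and follows essentially the same route as the paper's: Corollary \ref{coro3} supplies $H^n(E^\vee)=0$, Lemma \ref{lemma29} supplies $H^{n-1}(E^\vee)\in\Coh_{\leq 0}(X)$ from reflexivity, and Lemma \ref{lemma30} then gives the vanishing, with the threefold converse coming from the collapse of \eqref{eq26} to the single top condition. Your explicit bookkeeping of the nonzero-rank hypothesis and of $E^\vee\in D^{[0,n]}_{\Coh(X)}(X)$ is a welcome elaboration of details the paper leaves implicit, but it is not a different argument.
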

\begin{proof}
By Corollary \ref{coro3} and Lemma \ref{lemma30}, we just have to show $H^{n-1}(E^\vee) \in \Coh_{\leq 0}(X)$, which indeed holds by Lemma \ref{lemma29}.  That the converse holds when $X$ is a threefold is easy.
\end{proof}

Combining Lemma \ref{lemma31} and a couple of results from \cite{Lo4}, we obtain:

\begin{lemma}\label{lemma34}
Let $X$ be a threefold, and let $\sigma, \tilde{\sigma}$ be polynomial stabilities of type W1 and W2 on $D(X)$, respectively.  Let $E \in \Ap$ be a $\sigma$-semistable object where $ch_0(E)\neq 0$, and $ch_0(E), ch_1(E)$ are coprime.  Then the following are equivalent:
\begin{itemize}
\item[(i)] $E$ is $\tilde{\sigma}$-stable;
 \item[(ii)] $E$ is $\tilde{\sigma}$-semistable;
 \item[(iii)] $\Hom (\Coh_{\leq 1}(X),E)=0$.
\end{itemize}
\end{lemma}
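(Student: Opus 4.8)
I want to prove the equivalence of (i), (ii), (iii) for a $\sigma$-semistable object $E \in \Ap$ on a threefold, where $\sigma$ is of type W1 and $\tilde{\sigma}$ of type W2, under the coprimality hypothesis $\gcd(ch_0(E), ch_1(E))=1$. Let me think about the logical structure.

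**Understanding the pieces.** I have Lemma 3.xx (lemma31) which says: for $\sigma$-semistable $E \in \Ap$ with $\sigma$ of type W1, if $H^{-1}(E)$ is reflexive then $\Hom(\Coh_{\leq 1}(X), E)=0$, and on a threefold the converse holds too. So on a threefold, "(iii) $\Hom(\Coh_{\leq 1}(X),E)=0$" is *equivalent* to "$H^{-1}(E)$ is reflexive" for a $\sigma$-semistable $E$.

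And Lemma 3.xx (lemma_b) tells me: if $E$ is $\tilde{\sigma}$-semistable of type W2 with nonzero rank, then $H^{-1}(E)$ is $\mu$-semistable *and reflexive*. So (ii) $\Rightarrow$ $H^{-1}(E)$ reflexive $\Rightarrow$ (iii) via lemma31.

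**The cycle I'd close.** (i)$\Rightarrow$(ii) is trivial (stable implies semistable). So I need (ii)$\Rightarrow$(iii)$\Rightarrow$(i).

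For (ii)$\Rightarrow$(iii): Assume $E$ is $\tilde{\sigma}$-semistable. Since $ch_0(E)\neq 0$, $E$ has nonzero rank, so Lemma (lemma_b) applies and $H^{-1}(E)$ is reflexive. Then by Lemma (lemma31), $\Hom(\Coh_{\leq 1}(X),E)=0$. Done.

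Wait—lemma_b says $H^{-1}(E)$ reflexive when $E$ is $\tilde\sigma$-semistable, and lemma31's forward direction needs $E$ to be $\sigma$-semistable (which it is, by hypothesis) and $H^{-1}(E)$ reflexive. So this works cleanly.

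For (iii)$\Rightarrow$(i): This is where "a couple of results from [Lo4]" come in. I'd suspect the argument goes: (iii) with $\sigma$-semistability gives $H^{-1}(E)$ reflexive (lemma31 converse, threefold). Then I need to upgrade to $\tilde\sigma$-*stability*. This is where the coprimality of $ch_0, ch_1$ should force stability (no strictly semistable objects with these invariants), and where the results from [Lo4] presumably establish the actual $\tilde\sigma$-(semi)stability from reflexivity of $H^{-1}$ plus $\sigma$-semistability.

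---

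The plan is to close the cycle of implications (i)$\Rightarrow$(ii)$\Rightarrow$(iii)$\Rightarrow$(i), where the first implication is trivial. For (ii)$\Rightarrow$(iii), I would argue directly: since $E$ is $\tilde\sigma$-semistable of nonzero rank (because $ch_0(E)\neq 0$), Lemma \ref{lemma_b} gives that $H^{-1}(E)$ is reflexive. Since $E$ is simultaneously $\sigma$-semistable by hypothesis, the forward direction of Lemma \ref{lemma31} then yields $\Hom(\Coh_{\leq 1}(X),E)=0$, which is exactly (iii). This step requires no further input.

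The substance of the proof is (iii)$\Rightarrow$(i). First I would extract geometric content from (iii): since $E$ is $\sigma$-semistable and $X$ is a threefold, the converse direction of Lemma \ref{lemma31} converts the vanishing $\Hom(\Coh_{\leq 1}(X),E)=0$ into the statement that $H^{-1}(E)$ is reflexive. So (iii) is equivalent, for our $\sigma$-semistable $E$, to reflexivity of $H^{-1}(E)$. The remaining task is to promote the combination ``$\sigma$-semistable with $H^{-1}(E)$ reflexive'' to $\tilde\sigma$-\emph{stability}, and this is where I would invoke the results from \cite{Lo4}. The expected mechanism is that reflexivity of $H^{-1}(E)$, together with $H^0(E)$ being $0$-dimensional and the vanishing $\Hom(\Coh_{\leq 1}(X),E)=0$ (parts (1)--(3) of Lemma \ref{lemma_a} plus the stronger reflexivity), places $E$ in the class of objects that are $\tilde\sigma$-semistable; the comparison of the type W1 and type W2 phase data, recorded in the configuration hypotheses on the $\rho_i$, is what makes a destabilising subobject in $\tilde\sigma$ either violate reflexivity of $H^{-1}$ or contradict $\sigma$-semistability.

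Having obtained $\tilde\sigma$-semistability, I would use the coprimality hypothesis $\gcd(ch_0(E),ch_1(E))=1$ to rule out strictly semistable objects. Concretely, any proper nonzero subobject $E'\hookrightarrow E$ in $\Ap$ with equal $\tilde\sigma$-phase would, by the numerical form of the phase function, force $ch_0(E')$ and $ch_1(E')$ to be proportional to $ch_0(E)$ and $ch_1(E)$ with a rational ratio strictly between $0$ and $1$; coprimality forbids such an integral splitting, so $E$ can have no subobject of the same reduced phase, giving $\tilde\sigma$-stability, which is (i). I would conclude by remarking that the loop (i)$\Rightarrow$(ii)$\Rightarrow$(iii)$\Rightarrow$(i) establishes the three-way equivalence.

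The main obstacle I anticipate is the passage from ``$H^{-1}(E)$ reflexive and $E$ is $\sigma$-semistable'' to ``$E$ is $\tilde\sigma$-semistable'', i.e.\ the heart of (iii)$\Rightarrow$(i). Here one must genuinely compare the Harder--Narasimhan behaviour under two different polynomial stabilities sharing the same heart $\Ap$, and this is precisely where I expect to lean on the technical comparison results of \cite{Lo4}; reflexivity must be shown to be exactly the extra condition that bridges the type W1 and type W2 semistability notions on a threefold, as flagged in Remark \ref{remark14}. The trivial-looking coprimality step at the end is routine by comparison, but I would take care that the phase-collinearity computation is carried out for the polynomial (rather than numerical) central charge, using that no two stability vectors $\rho_i$ are collinear.
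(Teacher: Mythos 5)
Your treatment of (i)$\Rightarrow$(ii) and (ii)$\Rightarrow$(iii) matches the paper exactly: $\tilde\sigma$-semistability gives reflexivity of $H^{-1}(E)$ via Lemma \ref{lemma_b} (i.e.\ \cite[Lemma 3.2]{Lo3}), and then Lemma \ref{lemma31} gives the vanishing. The problem is in (iii)$\Rightarrow$(i), and specifically in where you deploy the coprimality hypothesis. You propose to first show that ``$\sigma$-semistable with $H^{-1}(E)$ reflexive'' already implies $\tilde\sigma$-\emph{semistability} without using coprimality, and only afterwards use $\gcd(ch_0,ch_1)=1$ to exclude strictly semistable objects. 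That intermediate implication is not just unproven in your sketch --- it is false in general: Remark \ref{remark14} states that reflexivity of $H^{-1}(E)$ is strictly \emph{more general} than $\tilde\sigma$-semistability, and Section \ref{subsection-comparison} records that $\MM^{\sigma,R,P}$ contains $\MM^{\sigma,\tilde\sigma,P}$ as a substack with equality only under the coprimality assumption. Concretely, if $H^{-1}(E)$ is reflexive but only strictly $\mu$-semistable, a subsheaf of $H^{-1}(E)$ of equal slope can $\tilde\sigma$-destabilise $E$ without contradicting $\sigma$-semistability, because the orderings of the phases $\phi(\rho_i)$ differ between types W1 and W2; so your claimed dichotomy (``a $\tilde\sigma$-destabiliser either violates reflexivity or contradicts $\sigma$-semistability'') does not hold.

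The paper's actual route puts the coprimality to work one step earlier: from the vanishing $\Hom(\Coh_{\leq 1}(X),E)=0$ together with $\sigma$-semistability and the coprimality of $ch_0(E), ch_1(E)$, one deduces via \cite[Lemma 3.10]{Lo3} that $H^{-1}(E)$ is $\mu$-\emph{stable} (not merely reflexive or $\mu$-semistable), and then \cite[Lemma 3.5]{Lo3} upgrades this directly to $\tilde\sigma$-\emph{stability} of $E$, with no separate ``rule out strictly semistables'' step at the end. Your closing phase-collinearity argument is plausible as far as it goes (modulo handling rank-zero subobjects, whose limiting phases differ from that of $E$ since no two $\rho_i$ are collinear), but it is addressing the wrong gap: the missing content is the passage to $\tilde\sigma$-semistability itself, and that passage genuinely requires $\mu$-stability of $H^{-1}(E)$, hence coprimality, as input.
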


\begin{proof}
Suppose $E$ is as described.  Suppose $E$ is also $\tilde{\sigma}$-semistable.  Then $H^{-1}(E)$ is reflexive by \cite[Lemma 3.2]{Lo3}, and so we have $\Hom (\Coh_{\leq 1}(X),E)=0$ by Lemma \ref{lemma31}.  Hence (ii) implies (iii).

Now, suppose  the vanishing $\Hom (\Coh_{\leq 1}(X),E)=0$ holds.  By \cite[Lemma 3.10]{Lo3}, we have that $H^{-1}(E)$ is $\mu$-stable.  By \cite[Lemma 3.5]{Lo3}, we get that $E$ is $\tilde{\sigma}$-stable, hence $\tilde{\sigma}$-semistable.  Hence (iii) implies (i).

Hence (i), (ii) and (iii) are equivalent.
\end{proof}

In \cite[Section 7.2]{BMTI}, Bayer-Macr\`{i}-Toda considers a category $\mathfrak{D}$ of two-term complexes, that appear to be closely related to tilt-semistable objects (see \cite[Lemmas 7.2.1, 7.2.2]{BMTI}).  For the following proposition, let $R$ be a discrete valuation ring over $k$, with $\iota, j$ as before.  Also, let $\Bob$ be as defined in \cite[Sections 3.1]{BMTI} and let $\mathfrak{D} \subset \Bob$ be the set of objects $E$ satisfying one of the following conditions \cite[Sections 7.2]{BMTI}:
\begin{itemize}
\item[(a)] $H^{-1}(E)=0$ and $H^0(E)$ is a pure sheaf of dimension $\geq 2$ that is slope-semistable with respect to $\omega$.
\item[(b)] $H^{-1}(E)=0$ and $H^0(E)$ is a sheaf of dimension $\leq 1$.
\item[(c)] $H^{-1}(E)$ is a torsion-free slope-semistable sheaf and $H^0(E) \in \Coh^{\leq1}(X)$. If $\mu_{\omega,B}(H^{-1}(E))<0$, we have $\Hom(\Coh^{\leq1}(X),E)=0$.
\end{itemize}

\begin{theorem}\label{theorem5}
Suppose $X$ is a threefold, and $ch$ a fixed Chern character where $ch_0 \neq 0$. For flat families of objects $E$ in $\Ap$ of Chern character $ch$, the property that $H^{-1}(E)$ is $\mu$-semistable is an open property.  As a consequence, objects of Chern character $ch$ in $\mathfrak{D}$ form a moduli stack.
\end{theorem}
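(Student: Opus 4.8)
The statement has two parts, and I would treat them in turn. The bulk of the work is the first: that, for a flat family $E$ over a Noetherian base $S$ of objects of $\Ap$ with fixed Chern character $ch$ (where $ch_0 \neq 0$), the locus $\{s \in S : H^{-1}(E_s)\text{ is }\mu\text{-semistable}\}$ is open. Once this is established, the second part is a matter of recognising membership in $\mathfrak D$ as a finite Boolean combination of open conditions already at our disposal. As usual, I would prove openness of the first locus by showing that it is constructible and stable under generisation, which together force openness on a Noetherian scheme.

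The one structural fact I would use repeatedly is that $\mu$-semistability is insensitive to modifications by sheaves supported in dimension $\leq 1$: by Remark \ref{remark7} it is a property of the image in $\Coh_{3,1}(X)$, where all of $\Coh_{\leq 1}(X)$ is killed. For constructibility I would take a common flattening stratification of the two cohomology sheaves $H^{-1}(E)$ and $H^0(E)$ over $S$. On each (reduced) stratum both sheaves are flat, so every higher derived pullback $L^p\iota_s^\ast H^q(E)$ with $p<0$ vanishes, the base-change spectral sequence \eqref{eq34} degenerates, and $H^{-1}(E_s) \cong \iota_s^\ast H^{-1}(E)$ is the fibre of a genuine flat family of sheaves. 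The classical openness of $\mu$-semistability in flat families of sheaves (see \cite{HL}) then shows the $\mu$-semistable locus is open on each stratum, hence constructible in $S$.

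For stability under generisation I would reduce, in the standard way, to $S = \Spec R$ with $R$ a discrete valuation ring (notation $\iota, j$ as before) and assume the central fibre $H^{-1}(E_k)$ is $\mu$-semistable. Here two facts specific to the valuation ring combine. First, because $R$ has global dimension one and each derived fibre lies in $D^{[-1,0]}$, the spectral sequence \eqref{eq34} forces $L^{-1}\iota^\ast H^{-1}(E)$ to be a subquotient of $H^{-2}(E_k)=0$; hence $H^{-1}(E)$ has no $\pi$-torsion and is flat over $R$. Second, pulling back the canonical truncation triangle $H^{-1}(E)[1] \to E \to H^0(E)$ by $L\iota^\ast$ gives the short exact sequence
\[
0 \to \iota^\ast H^{-1}(E) \to H^{-1}(E_k) \to L^{-1}\iota^\ast H^0(E) \to 0,
\]
whose right-hand term is supported on $\supp(\iota^\ast H^0(E)) = \supp(H^0(E_k))$, of dimension $\leq 1$. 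Thus $\iota^\ast H^{-1}(E)$ agrees with $H^{-1}(E_k)$ in $\Coh_{3,1}(X_k)$ and is therefore $\mu$-semistable; as $H^{-1}(E)$ is $R$-flat with $\mu$-semistable central fibre, classical openness over $\Spec R$ yields that $j^\ast H^{-1}(E) \cong H^{-1}(E_K)$ is $\mu$-semistable, which is the required generisation. The crux — and what I expect to be the main obstacle — is exactly this point: $H^{-1}(E_s)$ is the cohomology of a derived fibre rather than the fibre of a flat sheaf, and the argument hinges on the two observations that over a valuation ring $H^{-1}(E)$ is nevertheless flat and that the discrepancy between $H^{-1}(E_k)$ and $\iota^\ast H^{-1}(E)$ lies in $\Coh_{\leq 1}(X)$, hence is invisible to $\mu$-semistability.

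For the second part I would recall from \cite[Section 7.2]{BMTI} the explicit conditions defining objects of $\mathfrak D$ of types (a), (b) and (c); in each case these are finite conjunctions and disjunctions of conditions of the following kinds, every one of which we have shown to be open for flat families in $\Ap$: $H^{-1}(E)$ is $\mu$-semistable (the first part of this theorem); $H^{-1}(E)$ is reflexive (Theorem \ref{theorem3}); and $H^0(E)$ satisfies a dimension bound together with the vanishings $\Hom (\Coh_{\leq 0}(X),E)=0$ and $\Hom (\Coh_{\leq 1}(X),E)=0$ (Proposition \ref{prop5}, Lemma \ref{lemma27}, Lemma \ref{lemma30}). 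Finite intersections and unions of open conditions being open, membership in $\mathfrak D$ is an open condition, so objects of Chern character $ch$ in $\mathfrak D$ cut out an open substack of the stack $\mathcal M$ of all objects of $\Ap$ of Chern character $ch$; since $\mathcal M$ is algebraic and an open substack of an algebraic stack is again algebraic, this is exactly the assertion that objects of Chern character $ch$ in $\mathfrak D$ form a moduli stack.
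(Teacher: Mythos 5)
Your proof is correct and follows essentially the same route as the paper: for the first assertion the paper simply invokes the second half of the proof of \cite[Proposition 3.1]{Lo2} with $\mu$ in place of $p_{3,1}$, and the argument you spell out (constructibility via flattening stratifications, then generisation over a DVR using the spectral sequence \eqref{eq34}, the flatness of $H^{-1}(E)$ over $R$, and the fact that the discrepancy $L^{-1}\iota^\ast H^0(E)$ lies in $\Coh_{\leq 1}$ and is thus invisible in $\Coh_{3,1}$) is exactly that argument. For the second assertion the paper likewise exhibits membership in $\mathfrak D$ as a finite intersection of open conditions (citing openness of lying in $\Bob$ and in $\Ap$ together with Lemma \ref{lemma30}), which matches your Boolean-combination argument.
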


\begin{proof}
The argument for openness is the same as the second half of the proof of \cite[Proposition 3.1]{Lo2}, except that here we use the slope $\mu$ instead of the reduced Hilbert polynomial $p_{3,1}$.

For the second assertion of the lemma, note that  being in the heart $\Bob$ is an open property for complexes (by \cite[Example 1(2), Appendix A]{ABL}), as is being in $\Ap$ (by \cite[Lemma 3.14]{TodaLSO}).  Therefore, for a fixed Chern character $ch$ where $ch_0 \neq 0$, we have a moduli stack of objects $E \in \Bob$ with Chern character $ch$ such that $H^0(E) \in \Coh_{\leq 1}(X)$.  By Lemma \ref{lemma30}, the property that $\Hom (\Coh_{\leq 1}(X), E)=0$ is also an open property for flat families of complexes in $\Ap$.  Therefore, the moduli stack of objects of type (c) in $\mathfrak D$ exists, regardless of whether $\mu (H^{-1}(E))<0$ or not.
\end{proof}


\begin{remark}\label{remark15}
Suppose $X$ is a threefold, $ch_0 \neq 0, ch_1$ are coprime with $ch_1\cdot \omega^2/ch_0 <0$, and $\sigma, \tilde{\sigma}$ are polynomial stabilities of types W1 and W2, respectively.  Suppose $E \in D(X)$ is such that $H^0(E) \in \Coh_{\leq 0}(X)$.  Then by Lemmas \ref{lemma_a} and \ref{lemma34}, we have the following implications:
\[
\text{$E \in \mathfrak D$ is of type (c) } \Leftrightarrow \text{ $E$ is $\tilde{\sigma}$-stable } \Rightarrow \text{ $E$ is $\sigma$-stable}
\]
In other words, the moduli of objects in $\mathfrak {D}$ of type (c) with $\mu_{\omega, B}<0$ and 0-dimensional $H^0$ can be described as the moduli of $\tilde{\sigma}$-stable objects.
\end{remark}

\subsection{An open immersion of moduli stacks}\label{subsection-open-immersion}

Fix a Chern character $ch$ where $ch_0 \neq 0$, and a polynomial stability $\sigma$ of type W1.  By Corollary \ref{coro2}, there exists a moduli stack $\MM^\sigma$ of $\sigma$-semistable objects of Chern character $ch$.   By Lemma \ref{lemma_a} and Theorem \ref{theorem3}, we have an open substack $\MM^{\sigma, R} \subseteq \MM^\sigma$ parametrising the complexes $E \in D(X)$ such that $H^{-1}(E)$ is reflexive.

For any Noetherian scheme $B$ over the ground field $k$ and any $B$-flat family of complexes $E_B$ on $X$, define the following property (P) for fibres $E_b$ of $E_B$, $b \in B$:
    \begin{itemize}
      \item[(P)] The restriction $(H^{-1}(E_b))|_s$ of the cohomology sheaf $H^{-1}(E_b)$ to the fibre $\pi^{-1}(s)$ is a stable sheaf for a generic point $s \in S$.
    \end{itemize}
We have:
\begin{proposition}\label{prop1}
Property (P) is an open property for flat families of complexes in $\Ap$.
\end{proposition}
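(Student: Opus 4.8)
The plan is to reduce the statement to an openness fact about a family of sheaves on the base $S$, and then invoke standard semicontinuity. Property (P) concerns the restriction of the cohomology sheaf $H^{-1}(E_b)$ to the generic fibre of $\pi$; the key observation is that stability of a sheaf on the generic fibre of an elliptic fibration is, in the relevant setting, an open condition as we vary the parameter. First I would set up the situation: given a Noetherian scheme $B$ and a $B$-flat family $E_B \in D^b(X \times_k B)$ of complexes in $\Ap$, I would pass to the cohomology sheaf $H^{-1}(E_B)$, which by the flatness hypotheses and the constraint that $H^{-1}(E_b)$ is torsion-free (Lemma \ref{lemma_a}(1)) varies in a controlled way over $B$. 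The restriction to the generic fibre of $\pi$ concerns only the behaviour over the generic point of $S$, so I would work over the generic point $\eta$ of $S$ and consider the induced family on $X_\eta \times_k B$, where $X_\eta$ is the generic fibre, an elliptic curve over the function field $K(S)$.

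The main step is to recognise that stability of torsion-free sheaves on an elliptic curve (or more precisely, the generic fibre) is governed by numerical invariants together with a Jordan--H\"older-type argument, and that the locus in $B$ where the restricted sheaf is stable is open. Concretely, I would use that for a flat family of sheaves on a smooth projective curve, the property of being stable (with respect to the fixed slope $\mu$ coming from rank and fibre degree $d$) is open: instability is detected by the existence of a destabilising subsheaf, and the relevant quot/flag schemes parametrising such subsheaves are proper over $B$, so their images are closed, making the stable locus open. Since we are looking at the \emph{generic} fibre, I would combine this with the fact that restriction to a generic $s \in S$ commutes suitably with base change over $B$ after possibly shrinking $B$; generic flatness lets us assume $H^{-1}(E_B)$ restricts flatly to $X_\eta \times B$ over a dense open of $S$.

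I expect the main obstacle to be the bookkeeping involved in passing between the relative picture over $S$ and the absolute openness statement over $B$, specifically ensuring that ``stable for a generic $s \in S$'' is correctly interpreted as a condition about the generic fibre $X_\eta$ and that this is compatible with base change in the $B$-direction. One must be careful that the locus of $s \in S$ over which stability holds may itself vary with $b \in B$; the cleanest route is to fix attention on the scheme-theoretic generic fibre $X_\eta$ over $K(S)$ and argue openness in $B$ there, then transfer back using that stability on the generic fibre is equivalent to stability on a dense open subset of fibres. A clean way to organise this is to appeal to the relative Harder--Narasimhan formalism: the locus where the restriction to $X_\eta$ is $\mu$-stable is the complement of the image of a proper relative Quot-type parameter space of destabilising quotients, hence open in $B$.

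Thus the plan is: (1) reduce to the cohomology sheaf $F_B := H^{-1}(E_B)$, torsion-free by Lemma \ref{lemma_a}; (2) restrict to the generic fibre $X_\eta$ over $K(S)$ using generic flatness, obtaining a $B$-flat family of torsion-free sheaves on the curve $X_\eta$; (3) apply the standard openness of stability for flat families of sheaves on a smooth projective curve, realising the unstable locus as the image of a proper parameter space of destabilising subobjects; and (4) translate ``stable on $X_\eta$'' back into the statement that $(H^{-1}(E_b))|_s$ is stable for generic $s \in S$. The heart of the argument is step (3), while step (4)'s compatibility with base change is the most delicate technical point.
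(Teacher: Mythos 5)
Your plan matches the paper's own proof, which simply notes that the two-part argument of \cite[Lemmas 3.4, 3.5]{Lo5} for threefolds --- first passing to the family of $H^{-1}$ sheaves, then showing that stability of the restriction to the generic fibre of $\pi$ is an open condition via a properness/Quot-scheme argument over the generic point of $S$ --- generalises directly to $n \geq 3$. The one point to watch, which the cited lemmas handle and which your phrase ``varies in a controlled way'' elides, is that $H^{-1}(E_B)$ need not be $B$-flat nor commute with base change; this is harmless for property (P) because the discrepancy between $H^{-1}(E_b)$ and $(H^{-1}(E_B))|_b$ is supported on $\supp H^0(E_b)$, which has dimension at most $n-2$ and therefore never dominates $S$.
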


\begin{proof}
The proof of the threefold case, which was done in two parts in \cite[Lemma 3.5, Lemma 3.6]{Lo5}, generalises to the case of $n \geq 3$ in a straightforward manner.
\end{proof}
 As a result of Proposition \ref{prop1},  we have another open substack $\MM^{\sigma, R, P} \subset  \MM^{\sigma, R}$ consisting of complexes $E \in D(X)$ in $\MM^{\sigma, R}$ such that $H^{-1}(E)|_s$ is a stable sheaf for a generic point $s \in S$.

Overall, we have the following open immersions of stacks of complexes:
\begin{equation}\label{eq38}
   \MM^{\sigma, R, P} \subset \MM^{\sigma, R} \subset \MM^\sigma.
\end{equation}
Suppose we have fixed our Chern character $ch$ so that, for the complexes $E \in D(X)$ parametrised by $\MM^\sigma$, we have $\mu (H^{-1}(E)) < b/a$.  Then:

\begin{theorem}\label{thm_a}
Let $\pi$ be as in Theorem \ref{main1}. We have an open immersion of moduli stacks
\begin{equation}\label{eq39}
\xymatrix{
\MM^{\sigma, R, P}  \arinj[r]^(.6){} & \MM^s \ ,
}
\end{equation} where $\MM^s$ denotes the moduli stack of Gieseker stable torsion-free sheaves on Y , with respect to some polarisation.
\end{theorem}

The proof of this theorem is the same as that of the threefold case, namely \cite[Theorem 3.1]{Lo5}, except for the very last step where we invoke Lemma \ref{lemma31} to show the vanishing $\Hom (\Coh_{\leq 1}(X), E)=0$.  We reproduce the proof here since it is short, and also for clarity.

\begin{proof}
Take any $E \in \mathcal{A}^p$ corresponding to a point in $\MM^{\sigma, R, P}$.   By Corollary \ref{coro1}, we know that $H^{-1}(E)$ is $\Psi$-WIT$_1$ and $\widehat{H^{-1}(E)}$ is torsion free.  The Fourier-Mukai transform $\Psi$ takes the exact triangle in $D(X)$
\[
H^{-1}(E)[1] \to E \to H^{0}(E) \to H^{-1}(E)[2]
\]
to a short exact sequence of coherent sheaves on $Y$
\[
0 \to \widehat{H^{-1}(E)} \to \widehat{E} \to \widehat{H^{0}(E)} \to 0 \ .
\]
Since  $H^0(E)$ is supported on a finite number of points, it follows that  $\widehat{H^{0}(E)}$ is supported on  a finite number of fibres by \cite[Proposition 6.1]{FMNT}. Using \cite[Lemma 9.5]{BMef} we know that $\widehat{H^{-1}(E)}$ restricts to a stable sheaf on a generic fibre. Therefore $\widehat{E}$ is stable when restricted to a generic fibre of $\widehat{\pi}$.

 By \cite[Lemma 2.1]{BMef}, if $\widehat{E}$ is a torsion-free sheaf that restricts to a stable sheaf on a generic fibre, then  $\widehat{E}$ is stable with respect to a suitable polarisation on $Y$. Therefore it remains to show that $\widehat{E}$ is torsion free.

Suppose $\widehat{E}$ is not torsion-free. Denote by $T$ its maximal torsion subsheaf, which would be nonzero. Since $\widehat{H^{-1}(E)}$ is torsion free, we have an injection $T \hookrightarrow \widehat{H^{0}(E)}$ and $T$ is $\Phi$-WIT$_1$. The inclusion $T \hookrightarrow \widehat{E}$ gives a nonzero element in
\[
\Hom_Y(T, \widehat{E}) \cong \Hom_X(\Phi T, \Phi \widehat{E}) \cong \Hom_X(\widehat{T}, E) \ .
\]
Since $\widehat{H^0(E)}$ is supported on a finite number of fibres, so is $T$, and so $\dimension T \leq 1$.  However, by Lemma \ref{lemma31}, we have $\Hom (\Coh_{\leq 1}(X), E)=0$, a contradiction.  Hence $\widehat{E}$ must have been torsion-free to begin with.
\end{proof}

\subsection{Comparison with the threefold case}\label{subsection-comparison}

Let $\sigma$ be a polynomial stability of type W1 throughout the rest of this section.

In the theorem we have just proved, Theorem \ref{thm_a}, we embed the moduli stack $\MM^{\sigma, R, P}$ into a moduli stack of stable sheaves.  Recall that $\MM^{\sigma, R, P}$ parametrises $\sigma$-semistable objects $E\in\Ap$ (of fixed $ch$ where $ch_0 \neq 0$) such that   $H^{-1}(E)$ is torsion-free and reflexive, and $E$ satisfies property (P).

On the other hand, in \cite[Theorem 3.1]{Lo5}, where $X$ is a threefold, we embed a moduli stack $\MM^{\sigma, \tilde{\sigma}, P}$ into a moduli stack of stable sheaves.  There, $\MM^{\sigma, \tilde{\sigma}, P}$ parametrises $\sigma$-semistable objects $E \in \Ap$ such that $E$ is  also $\tilde{\sigma}$-semistable ($\tilde{\sigma}$ being a polynomial stability of type W2), and satisfies property (P).

Given an object $E \in \Ap$ on a threefold $X$, if $E$ is $\tilde{\sigma}$-semistable, then $H^{-1}(E)$ is torsion-free and reflexive by \cite[Lemma 3.2]{Lo3}.  Therefore, when $X$ is a threefold, the stack $\MM^{\sigma, R, P}$ contains $\MM^{\sigma,\tilde{\sigma}, P}$ as a substack, i.e.\ our Theorem \ref{thm_a} appears more general than \cite[Theorem 3.1]{Lo5}.  By Lemma \ref{lemma34}, however, we know that  $\MM^{\sigma, R, P}$ and $\MM^{\sigma,\tilde{\sigma}, P}$ coincide when we assume that $ch_0 \neq 0, ch_1$ are coprime.



\section{Moduli of rank-one torsion-free sheaves}
\label{moduli2}

Throughout this section, suppose that $n =\dimension X \geq 3$, and  $\pi$ is as in Theorem \ref{main1}.  Let us first recall the following theorem, which holds in higher dimensions with the same proof:

 \begin{theorem}\cite[Theorem 4.1]{Lo5}\label{thm1}
The functor $\Psi$ induces an equivalence  between the following two categories:
 \begin{itemize}
 \item[(i)] the category $\mathcal C_X$ of objects $E$ in
 \[
 \langle \mathcal B_X\cap W_{0,X}, \mathcal B_X^\circ \cap W_{1,X} [1] \rangle
  \]
  satisfying
  \[
  \Hom (\mathcal B_X \cap W_{0,X},E)=0,
   \]
   such that $H^{-1}(E)$ has nonzero rank, $\mu (H^{-1}(E)) < b/a$, and $H^{-1}(E)$ restricts to a stable sheaf on the generic fibre of $\pi$;
 \item[(ii)] the category $\mathcal C_Y'$ of torsion-free sheaves $F$ on $Y$ such that $\mu (F) > -c/a$, and $F$ restricts to a stable sheaf on the generic fibre of $\widehat{\pi}$, and such that in the unique short exact sequence
 \[
   0 \to A \to F \to B \to 0
 \]
 where $A$ is $\Phi$-WIT$_0$ and $B$ is $\Phi$-WIT$_1$, we have $B \in \mathcal B_Y$.  (Note that, this is equivalent to requiring $B$ to be  a torsion sheaf by Lemma \ref{lemma3}.)
 \end{itemize}
 Under the above equivalence of categories, we have $A = \widehat{H^{-1}(E)}$ and $B = \widehat{H^0(E)}$.
 \end{theorem}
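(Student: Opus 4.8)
The plan is to exhibit the two mutually inverse equivalences $\Psi$ and $\Phi[1]$ on the derived categories (recall from assumption (vi) that $\Phi[1]\circ\Psi = \mathrm{id}_{D(X)}$ and $\Psi\circ\Phi[1]=\mathrm{id}_{D(Y)}$) and to check that they restrict to the stated subcategories. Once I verify $\Psi(\mathcal C_X)\subseteq \mathcal C_Y$ and $\Phi[1](\mathcal C_Y)\subseteq \mathcal C_X$, the equivalence of categories is automatic from these general relations, and the identities $A=\widehat{H^{-1}(E)}$, $B=\widehat{H^0(E)}$ will drop out of the computation. So the content is entirely in the two inclusions.

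For $E\in\mathcal C_X$, the central computation is to apply $\Psi$ to the exact triangle $H^{-1}(E)[1]\to E\to H^0(E)\to H^{-1}(E)[2]$. Since $H^{-1}(E)\in\mathcal B_X^\circ\cap W_{1,X}$ is $\Psi$-WIT$_1$ and $H^0(E)\in\mathcal B_X\cap W_{0,X}$ is $\Psi$-WIT$_0$, the outer terms transform to the sheaves $\widehat{H^{-1}(E)}$ and $\widehat{H^0(E)}$ in degree $0$; reading the long exact cohomology sequence shows $\Psi(E)$ is a single sheaf $F:=\Psi^0(E)$ fitting in $0\to \widehat{H^{-1}(E)}\to F\to \widehat{H^0(E)}\to 0$. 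Here $\widehat{H^0(E)}\in\mathcal B_Y$ is $\Phi$-WIT$_1$ by Lemma \ref{lemma5}. To see $\widehat{H^{-1}(E)}$ is the torsion-free $\Phi$-WIT$_0$ part, I first note $H^{-1}(E)$ is torsion-free: any torsion subsheaf would be vertical, hence in $\mathcal B_X$ by the generic-stability hypothesis, contradicting $H^{-1}(E)\in\mathcal B_X^\circ$. Pulling the hypothesis $\Hom(\mathcal B_X\cap W_{0,X},E)=0$ back along the triangle yields $\Ext^1(\mathcal B_X\cap W_{0,X},H^{-1}(E))=0$, so by Theorem \ref{thm0} (its remaining input, generic torsion-freeness of the transform, comes from \cite[Lemma 9.5]{BMef}) the sheaf $\widehat{H^{-1}(E)}$ is torsion-free, and by Lemma \ref{lemma6} and Lemma \ref{lemma7} it lies in $\mathcal B_Y^\circ\cap W_{0,Y}$. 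Finally $\mu(F)=\mu(\widehat{H^{-1}(E)})>-c/a$ by Lemma \ref{lemma11}, and $F$ restricts on the generic fibre of $\hat\pi$ to the stable restriction of $\widehat{H^{-1}(E)}$ because $\widehat{H^0(E)}$ is vertical. This displays the sequence as the $\Phi$-WIT decomposition of $F$, giving the asserted $A$ and $B$.

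The main obstacle is showing $F=\Psi(E)$ is \emph{genuinely} torsion-free, not merely that its subsheaf $\widehat{H^{-1}(E)}$ is. Any torsion subsheaf $T\subseteq F$ injects into $\widehat{H^0(E)}$, hence lies in $\mathcal B_Y$. I transport the nonzero inclusion across the equivalence: $\Hom_Y(T,F)\cong \Hom_X(\Phi T[1],E)$, where $C:=\Phi T[1]$ is a two-term complex with $H^{-1}(C)=\Phi^0 T$ and $H^0(C)=\Phi^1 T$, both in $\mathcal B_X$ by Lemma \ref{lemma5}, and with $\Phi^1 T$ being $\Psi$-WIT$_0$. Applying $\Hom_X(-,E)$ to the triangle $H^{-1}(C)[1]\to C\to H^0(C)$ squeezes $\Hom_X(C,E)$ between $\Hom_X(\Phi^1 T,E)=0$ (the hypothesis on $E$, since $\Phi^1 T\in\mathcal B_X\cap W_{0,X}$) and $\Hom_X(\Phi^0 T,H^{-1}(E))=0$ (a map from the torsion sheaf $\Phi^0 T$ to the torsion-free $H^{-1}(E)$). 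Hence $\Hom_X(C,E)=0$, forcing $T=0$. Notably this needs no WIT hypothesis on $T$ itself, only on the cohomology sheaves of $\Phi T$.

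For the reverse inclusion $\Phi[1](\mathcal C_Y)\subseteq\mathcal C_X$, I run the dual computation: applying $\Phi$ to the $\Phi$-WIT sequence $0\to A\to F\to B\to 0$ of $F\in\mathcal C_Y$ gives $E:=\Phi(F)[1]$ with $H^{-1}(E)=\hat A$ and $H^0(E)=\hat B$. Then $\hat B\in\mathcal B_X\cap W_{0,X}$ by Lemma \ref{lemma5}; and since $F$ is torsion-free, $A$ is torsion-free, so $\Hom(\mathcal B_Y,A)=0$ and $A\in\mathcal B_Y^\circ\cap W_{0,Y}$, whence $\hat A=H^{-1}(E)$ is torsion-free, $\Psi$-WIT$_1$, lies in $\mathcal B_X^\circ$, and has $\mu<b/a$ with stable generic restriction by Lemma \ref{lemma7}, Lemma \ref{lemma11} and \cite[Lemma 9.5]{BMef}. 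The vanishing $\Hom(\mathcal B_X\cap W_{0,X},E)=0$ is the cleanest point: for $G\in\mathcal B_X\cap W_{0,X}$ one has $\Hom_X(G,E)\cong\Hom_Y(\hat G,F)$ with $\hat G\in\mathcal B_Y$ torsion and $F$ torsion-free, so it vanishes. This places $E$ in $\mathcal C_X$ and, together with the forward inclusion, completes the proof.
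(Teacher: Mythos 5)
Your proposal is correct, and it follows essentially the same route as the paper's proof (which is deferred to \cite[Theorem 4.1]{Lo5}): apply $\Psi$ to the canonical triangle $H^{-1}(E)[1]\to E\to H^0(E)$, identify the two pieces via Lemmas \ref{lemma5}, \ref{lemma7}, \ref{lemma11} and Theorem \ref{thm0}, and use adjunction against the hypothesis $\Hom(\mathcal B_X\cap W_{0,X},E)=0$ to rule out torsion in $\hat E$, with the inverse direction run symmetrically through $\Phi[1]$. The key computations (the $\Phi$-WIT decomposition being $0\to\widehat{H^{-1}(E)}\to\hat E\to\widehat{H^0(E)}\to 0$, and the transport of a torsion subsheaf $T\subseteq\hat E$ to a morphism $\Phi T[1]\to E$) are exactly the intended ones.
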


Let $\overline{\mathcal C_Y'}$ denote the category of torsion-free sheaves $F$ on $Y$ such that $\mu (F) > -c/a$, and $F$ restricts to a stable sheaf on the generic fibre of $\widehat{\pi}$.  Note that $\mathcal C_Y' \subseteq \overline{\mathcal C_Y'}$.

Take any $F$ in $\overline{\mathcal C_Y'}$.  Consider the short exact sequence
\[
0 \to A \to F \to B \to 0
\]
where $A$ is $\Phi$-WIT$_0$ and $B$ is $\Phi$-WIT$_1$.  Suppose $F$ is rank-one; then either $r(A)=0$ or $r(A)=1$.  If $r(A)=0$, then $A=0$ since $F$ is torsion-free.  Then $F$ is $\Phi$-WIT$_1$, and so $\mu (F) \leq -c/a$ by Lemma \ref{lemma2} (or, rather, its analogue on $Y$), a contradiction.  Hence $r(A)=1$, in which case $B$ must be torsion, i.e.\ $F$ lies in $\mathcal C_Y'$.  Thus we have:

\begin{lemma}\label{lemma22}
The rank-one objects of $\mathcal C_Y'$ are the same as the rank-one objects in $\overline{\mathcal C_Y'}$, which are exactly the rank-one torsion-free sheaves with $\mu > -c/a$ on $Y$.
\end{lemma}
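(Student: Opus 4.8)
The plan is to establish the two asserted equalities in turn: that the rank-one objects of $\overline{\mathcal{C}_Y}$ are exactly the rank-one torsion-free sheaves with $\mu > -c/a$, and that each of these already lies in $\mathcal{C}_Y$.

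For the first equality, note that beyond torsion-freeness and $\mu > -c/a$, membership in $\overline{\mathcal{C}_Y}$ demands only that $F$ restrict to a stable sheaf on the generic fibre of $\hat\pi$. In rank one this is automatic: the restriction of a rank-one torsion-free sheaf to the generic fibre is a rank-one torsion-free sheaf on a smooth elliptic curve, hence a line bundle, which is stable. So for rank-one sheaves the defining conditions of $\overline{\mathcal{C}_Y}$ reduce to torsion-freeness, rank one, and $\mu > -c/a$.

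For the second equality, the inclusion $\mathcal{C}_Y \subseteq \overline{\mathcal{C}_Y}$ is given, so I only need to verify that a rank-one $F \in \overline{\mathcal{C}_Y}$ satisfies the extra condition cutting out $\mathcal{C}_Y$, namely $B \in \mathcal{B}_Y$ in the canonical sequence $0 \to A \to F \to B \to 0$ with $A$ being $\Phi$-WIT$_0$ and $B$ being $\Phi$-WIT$_1$. Since $r(F) = 1$ and $A \hookrightarrow F$, either $r(A) = 0$ or $r(A) = 1$. If $r(A) = 0$ then $A$ is a torsion subsheaf of the torsion-free $F$, forcing $A = 0$; then $F$ itself is $\Phi$-WIT$_1$, and the $Y$-analogue of Lemma \ref{lemma2} yields $\mu(F) \leq -c/a$, contradicting $F \in \overline{\mathcal{C}_Y}$. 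Hence $r(A) = 1$, so $r(B) = 0$ and $B$ is torsion; being a torsion $\Phi$-WIT$_1$ sheaf, $B$ lies in $\mathcal{B}_Y$ by the $Y$-analogue of Lemma \ref{lemma3}. Thus $F \in \mathcal{C}_Y$.

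The entire argument is essentially rank bookkeeping, and the only genuine input is the exclusion of the case $r(A) = 0$, where the slope hypothesis $\mu > -c/a$ meets the WIT dichotomy through Lemma \ref{lemma2}. The one point I would take care with is that the analogues of Lemmas \ref{lemma2} and \ref{lemma3} on $Y$ carry the correct slope threshold $-c/a$ rather than $b/a$; this comes from the transformation \eqref{eqn-rdunderPsi} governing how $(r,d)$ change under $\Psi$, as already used in Lemmas \ref{lemma11} and \ref{lemma12}.
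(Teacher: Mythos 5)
Your proof is correct and follows essentially the same route as the paper: the dichotomy $r(A)\in\{0,1\}$, elimination of $r(A)=0$ via torsion-freeness and the $Y$-analogue of Lemma \ref{lemma2}, and the conclusion that $B$ is torsion hence in $\mathcal B_Y$ by the analogue of Lemma \ref{lemma3}. Your additional observation that generic-fibre stability is automatic for rank-one torsion-free sheaves is a correct detail the paper leaves implicit.
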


\begin{remark}\label{remark5}
Suppose an object $E\in \mathcal C_X$ maps to an object $F \in \mathcal C_Y'$ under $\Psi$.  Then $r(F)=r(\widehat{H^{-1}(E)})=-r(\Psi (H^{-1}(E)) = r(\Psi (E))=-b\cdot r(E) + a\cdot d(E)$.
Hence, by Lemma \ref{lemma22}, the category of objects $E$ in $\mathcal C_X$ satisfying $-b \cdot r(E) + a\cdot d(E)=1$  form a moduli space that is isomorphic to the moduli of rank-one torsion-free sheaves on $Y$.
In other words, the three conditions for complexes $E$ on $X$ in the definition of $\mathcal C_X$ (except for those conditions on the Chern classes of $E$), namely
\begin{itemize}
\item $E \in \langle \mathcal B_X\cap W_{0,X}, \mathcal B_X^\circ \cap W_{1,X} [1] \rangle$;
\item $\Hom (\mathcal B_X \cap W_{0,X},E)=0$; and
\item $H^{-1}(E)$ restricts to a stable sheaf on the generic fibre of $\pi$
\end{itemize}
should, in some sense, correspond to a  of stability condition for complexes.
\end{remark}

\section{Pure Codimension-$1$ Sheaves}\label{pure}

In this section, we study coherent sheaves supported in codimension 1 via Fourier-Mukai transforms.  Our main goal is to produce Theorem \ref{theorem6}, an equivalence  between the category of line bundles of fibre degree 0 on $X$ and the category of line bundles supported on sections of the dual elliptic fibration $Y$, which generalises \cite[Corollary 5.9]{Lo5} to higher dimensions.  The proofs of many results in \cite[Section 5]{Lo5} leading to \cite[Corollary 5.9]{Lo5} only hold for elliptic surfaces or threefolds; we rewrite their proofs for higher dimensional elliptic fibrations.

Before we come to the results, let us write $\mathcal P$ for the universal family on $Y \times X$ as in \cite[Section 8.4]{BMef}, and write $\mathcal Q := R\HHom_{\OO_{Y \times X}} (\mathcal P, \pi_X^\ast \omega_X) [n-1]$, so that $\Psi$ can be taken as the integral transform $D(X) \to D(Y)$ with kernel $\mathcal Q$.  Note that \cite[Lemma 8.4]{BMef} holds whenever $\dimension S \geq 1$  (as is \cite[Lemma 6.5]{FMTes}).  Therefore, $\mathcal Q$ is a sheaf that is flat over both $X$ and $Y$.

Throughout this section, let $S$ be of any dimension at least 1, i.e.\ the dimension of $X$ is at least 2.

\begin{lemma}\label{lemma14}
If $F$ is a pure codimension-1 sheaf on $Y$ that is flat over $S$, then $F$ is $\Phi$-WIT$_0$, lies in $\mathcal B_Y^\circ$, and $\widehat{F}$ is torsion-free.
\end{lemma}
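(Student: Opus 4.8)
The plan is to reduce all three assertions to the behaviour of $F$ on the fibres of $\hat\pi$, exploiting the fact that flatness over $S$ together with $\dimension F = \dimension S = n-1$ forces the support of $F$ to be finite over $S$. Concretely, flatness over $S$ means the fibres $F|_{Y_s} := F \otimes_{\mathcal O_S} k(s)$ all have dimension $\dimension F - \dimension S = 0$, so $F|_{Y_s}$ is a finite-length sheaf on $Y_s$ for every $s \in S$, and $\supp (F)$ is ``horizontal'' (finite over $S$). I will combine this fibrewise picture with a single homological computation that simultaneously yields $F \in \mathcal B_Y^\circ$ and the torsion-freeness of $\hat F$.

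First I would prove that $F$ is $\Phi$-WIT$_0$. On each fibre $Y_s$ the restriction $F|_{Y_s}$ is torsion (finite-length), and a torsion sheaf on a fibre is WIT$_0$ with respect to the fibrewise transform $\Phi_s$, its transform $\Phi^0_s(F|_{Y_s})$ being locally free of positive rank. Since the kernel $\mathcal Q$ is flat over both factors and $F$ is flat over $S$, cohomology and base change for the relative transform (as in \cite[Proposition 6.1]{FMNT}) gives $\Phi^1(F)|_{X_s} \cong \Phi^1_s(F|_{Y_s}) = 0$ for all $s$, whence $\Phi^1(F)=0$. Thus $F$ is $\Phi$-WIT$_0$, and $\hat F = \Phi^0(F)$ is flat over $S$ with $\hat F|_{X_s} \cong \Phi^0_s(F|_{Y_s})$ locally free.

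Next I would establish the single vanishing $\Hom(\mathcal B_Y, F)=0$. Let $B \in \mathcal B_Y$ and suppose $\al : B \to F$ is nonzero; then $\image \al$ is a nonzero subsheaf of $F$, hence pure of dimension $n-1$ since $F$ is pure of codimension $1$. On the other hand $\image \al$ is supported on $\supp(B) \cap \supp(F)$. If $\dimension B \leq n-2$ this already contradicts purity. Otherwise $\dimension B = n-1$, and the conditions $r(B)=d(B)=0$ force the support of $B$ to be vertical, i.e.\ contained in $\hat\pi^{-1}(D)$ for some proper closed $D \subsetneq S$; intersecting with the horizontal $\supp(F)$, which is finite over $S$, gives a locus of dimension at most $\dimension D \leq n-2$, again contradicting purity. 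Hence $\Hom(\mathcal B_Y, F)=0$, which is exactly the statement $F \in \mathcal B_Y^\circ$. Combining this with the $\Phi$-WIT$_0$ property already established, Lemma \ref{lemma8} (applied on $Y$) then yields that $\hat F$ is torsion-free, completing the proof.

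I expect the main obstacle to be the first step: justifying that a finite-length sheaf on a possibly singular (Cohen--Macaulay) fibre is fibrewise WIT$_0$ with locally free transform, and that flatness of $F$ over $S$ genuinely licenses the passage from this fibrewise statement to the global vanishing $\Phi^1(F)=0$ via base change. By contrast, the support-dimension argument in the last step is comparatively routine, the only delicate point being to make precise that a rank-zero, fibre-degree-zero sheaf on $Y$ has vertical support, which is where the definition of $\mathcal B_Y$ and the hypothesis that $\supp(F)$ is finite over $S$ interact.
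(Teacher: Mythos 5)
Your proof is correct. The only place you genuinely diverge from the paper is in establishing that $F$ is $\Phi$-WIT$_0$: the paper applies Bridgeland's criterion \cite[Lemma 6.5]{FMTes}, showing $\Hom(F,\mathcal Q_x)=0$ for all $x\in X$ on the grounds that a nonzero image inside the stable $1$-dimensional sheaf $\mathcal Q_x$ would force $\supp(F)$ to meet a fibre of $\hat\pi$ in dimension $1$ and hence, by flatness, force $F$ to have positive rank. Your route --- fibrewise WIT$_0$ of the finite-length restrictions $F|_{Y_s}$ plus cohomology and base change --- is equally valid and is in fact the mechanism the paper itself invokes two results later (via \cite[Corollary 6.2]{FMNT} in the proof of Proposition \ref{prop3}); the obstacle you flag is not a real one, since the kernel is flat over $Y$, so skyscrapers transform to sheaves and WIT$_0$ is extension-closed even over singular Cohen--Macaulay fibres. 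What your version buys is that flatness of $\hat F$ over $S$ comes for free, which the paper needs later anyway; the one inaccuracy is your parenthetical claim that $\Phi^0_s(F|_{Y_s})$ is locally free, which can fail on singular fibres but is never used. Your argument for $F\in\mathcal B_Y^\circ$ is the paper's argument in different clothing (vertical support of $B$ against the finite-over-$S$ support of $F$, rather than ``$\supp(A)$ contains a fibre, hence $F$ has positive rank''), and for torsion-freeness of $\hat F$ you cite Lemma \ref{lemma8} where the paper cites Lemma \ref{lemma7}; yours is the more direct reference, since $F\in\mathcal B_Y^\circ$ immediately gives $\Hom(\mathcal B_Y\cap W_{0,Y},F)=0$.
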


\begin{proof}
Suppose $F$ is as described.  We first show that $F$ is $\Phi$-WIT$_0$.  The argument is essentially the same as that in \cite[Remark 5.6]{Lo5}, but we rewrite the proof slightly for clarity: by \cite[Lemma 6.5]{FMTes}, it suffices to show that $\Hom (F,\mathcal Q_x)=0$ for all $x \in X$.  Take any nonzero morphism of sheaves $F \overset{\al}{\to} \mathcal Q_x$ in $\Coh (Y)$, for any $x \in X$.  Then the support of  $\image (\al)$ is contained in $\widehat{\pi}^{-1}(\pi(x)) \cap \text{supp}(F)$.  Since $\mathcal Q_x$ is a stable 1-dimensional sheaf on $\widehat{\pi}^{-1}(\pi(x))$, the sheaf $\image (\al)$ cannot be 0-dimensional.  If $\image (\al)$ is a 1-dimensional sheaf, then $F|_{\pi(x)}$ is 1-dimensional, and by the flatness of $F$ over $S$, we see that $F$ has nonzero rank, a contradiction.  Hence $F$ is $\Phi$-WIT$_0$.

Next, we show that $F \in \mathcal B_Y^\circ$.  Take any nonzero $A \in \mathcal B_Y$, and consider a morphism $A \overset{\beta}{\to} F$ in $\Coh (Y)$.  Since $\image (\beta) \in \mathcal B_Y$, we can replace $A$ by $\image (\beta)$ and assume $\beta$ is an injection.  Since $F$ is a pure sheaf, $A$ itself must also be pure and of codimension 1.  This, along with $A \in \mathcal B_Y$, implies that the support of $A$ contains a fibre of $\widehat{\pi}$.  Therefore, the support of $F$ also contains a fibre of $\widehat{\pi}$; by flatness of $F$ over $S$, we get that $F$ has nonzero rank, again a contradiction.

Lastly, that $\widehat{F}$ is torsion-free follows from Lemma \ref{lemma7}.
\end{proof}

The following is an analogue of \cite[Proposition 5.7]{Lo5} when we do not require the dimension of $S$ to be 1-dimensional:

\begin{pro}\label{prop3}
Let $\mathcal D_X$ and $\mathcal D_Y'$ be the following categories:
\begin{align}\label{eq20}
\mathcal D_X := & \{ E \in \Coh (X) : \text{$E$ is torsion-free with positive rank, flat over $S$}, \\
& \mu (E)=b/a,\,   \Ext^1 (\mathcal B_X \cap W_{0,X},E)=0, \text{ and } E|_s \text{ is WIT$_1$} \text{ for all } s \in S \} \nonumber \\
\mathcal D_Y' := & \{ F \in \Coh (Y) : F \text{ is pure of codimension 1, flat over $S$}\} \ . \nonumber
\end{align} Then the functor $\Psi [1] : D(X) \to D(Y)$ induces an equivalence of categories $\mathcal D_X \rightarrow \mathcal D_Y'$.

\end{pro}

\begin{proof}
Take any nonzero $F \in \mathcal D_Y'$.  Since $F$ is flat over $S$, it cannot lie in $\mathcal B_Y$.  By Lemmas \ref{lemma7}, \ref{lemma12} and  \ref{lemma14}, we obtain that $F$ is $\Phi$-WIT$_0$, and that $\widehat{F}$ is torsion-free with positive rank, $\mu (\widehat{F})=b/a$ and $\Ext^1 (\mathcal B_X \cap W_{0,X},\widehat{F})=0$.  Since the  restriction of $F$ to each fibre $\widehat{\pi}^{-1}(s)$ is a 0-dimensional sheaf,  hence  $\Phi_s$-WIT$_0$, by \cite[Corollary 6.2]{FMNT}, we have that $\widehat{F}$ is flat over $S$.  Also by \cite[Corollary 6.2]{FMNT}, we have $\widehat{F}|_s$ is $\Psi_s$-WIT$_1$ for all $s \in S$.  Hence $\widehat{F} \in \mathcal D_X$.

Next, take any $E \in \mathcal D_X$.  By \cite[Corollary 6.2]{FMNT}, we know $E$ is $\Psi$-WIT$_1$ and $\widehat{E}$ is flat over $S$.  By Lemma \ref{lemma12}, the transform $\widehat{E}$ is a torsion sheaf but does not lie in $\mathcal B_Y$.  Hence $\widehat{E}$ is of codimension 1.  By Lemma \ref{lemma7}, $\widehat{E}$ does not have any subsheaf in $\mathcal B_Y$, and so does not have any subsheaf of codimension 2 or greater, i.e.\ $\widehat{E}$ is a pure sheaf.  Hence $\widehat{E}$ lies in $\mathcal D_Y'$.
\end{proof}

\begin{definition}\cite[Definitions 6.8, 6.10]{FMNT}
A Weierstrass fibration is an elliptic fibration $\pi:X \to S$ such that all the fibres of $\pi$ are geometrically integral Gorenstein curves of arithmetic genus 1, and there is a section $\sigma: S \to X$ of $\pi$ such that $\sigma(S)$ does not contain any singular point of the fibres.
\end{definition}
Note that, any fibre of a Weierstrass fibration as defined above necessarily has trivial dualising sheaf \cite[Section 1.1]{genus1}, and so a Weierstrass fibration in this sense satisfies the hypothesis of Theorem \ref{main1}.

\begin{remark}\label{remark2}
By \cite[Proposition 6.51]{FMNT}, when $b=0$ and $\pi, \widehat{\pi}$ are Weierstrass fibrations, every sheaf in $\mathcal D_X$ is fiberwise torsion-free and semistable.
\end{remark}

\begin{remark}\label{remark11}
Proposition \ref{prop3} reduces to the second equivalence of categories for elliptic surfaces in \cite[Proposition 5.7]{Lo5}.
\end{remark}


The following result was stated only for elliptic surfaces and threefolds in \cite{Lo5}, but its proof works as long as $\dimension S \geq 2$:

\begin{lemma}\cite[Lemma 5.8]{Lo5}\label{lemma15}
 Suppose $F$ is a pure codimension-1 sheaf on $Y$ that is flat over $S$.  Then $\widehat{\pi}$ restricts to a finite morphism $\widehat{\pi} : \text{supp}(F) \to S$, and $F \in \mathcal B_Y^\circ$. Furthermore, if $d(F)=1$, then $\text{supp}(F)$ is a section of $\widehat{\pi} : Y \to S$, and $F$ is a line bundle on $\text{supp}(F)$.
\end{lemma}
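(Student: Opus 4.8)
The plan is to prove the three assertions in turn: finiteness and the membership $F\in\mathcal B_Y^\circ$ come quickly from flatness (the latter is in fact already Lemma \ref{lemma14}), while the case $d(F)=1$ is handled by exhibiting $F$ as the pushforward of a line bundle from a section. First I would establish finiteness. Since $F$ has codimension $1$ on the $n$-dimensional $Y$, its rank is $r(F)=0$, so the restriction of $F$ to the generic fibre $Y_\eta$ of $\hat\pi$ is a torsion (hence $0$-dimensional) sheaf on that elliptic curve. As $F$ is flat over $S$, the Hilbert polynomial of $F|_s$ (with respect to a relatively ample bundle on the fibres) is constant in $s$ and equals that of $F|_\eta$, which has degree $0$; therefore $F|_s$ is $0$-dimensional for every $s\in S$. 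Hence $\text{supp}(F)\cap\hat\pi^{-1}(s)=\text{supp}(F|_s)$ is finite for all $s$, so $\hat\pi|_{\text{supp}(F)}:\text{supp}(F)\to S$ is quasi-finite; being also proper (it is the restriction of the proper map $\hat\pi$ to a closed subscheme), it is finite. The membership $F\in\mathcal B_Y^\circ$ is precisely the content of Lemma \ref{lemma14}.

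Now suppose $d(F)=1$. Since $d(F)$ is the degree of $F$ on a general fibre and $F|_s$ is $0$-dimensional, $d(F)$ equals the length of $F|_s$ for general $s$, and by flatness this length is $1$ for all $s$. Because the fibres $F|_s$ are $0$-dimensional we have $R^{>0}\hat\pi_\ast F=0$, and by cohomology and base change $L:=\hat\pi_\ast F$ is locally free of rank $\text{length}(F|_s)=1$, i.e.\ a line bundle on $S$. The key step is to examine the counit $\hat\pi^\ast L\to F$: restricted to a fibre $Y_s$ it is the evaluation map $H^0(Y_s,F|_s)\otimes\OO_{Y_s}\to F|_s$, which is surjective since a $0$-dimensional sheaf is globally generated. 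As this holds for every $s$, the cokernel restricts to $0$ on every fibre and hence vanishes, so $\hat\pi^\ast L\twoheadrightarrow F$ is surjective. Writing $Z$ for the scheme-theoretic support of $F$ and $\iota_Z:Z\hookrightarrow Y$, this surjection exhibits $F\cong\iota_{Z\ast}\big((\hat\pi^\ast L)|_Z\big)$ as the pushforward of a line bundle on $Z$.

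It then remains to identify $Z$ with a section. From $F\cong(\hat\pi^\ast L)|_Z$ we obtain $F|_s\cong(L\otimes k(s))\otimes\OO_{Z_s}$, so $\OO_{Z_s}$ has length $1$ for every $s$; thus $Z\to S$ is finite with all fibres reduced points. Since $S$ is integral, $\hat\pi_\ast\OO_Z$ is locally free of rank $1$, and the structure map $\OO_S\to\hat\pi_\ast\OO_Z$ is a morphism of line bundles that is an isomorphism on every fibre, hence an isomorphism. Therefore $\hat\pi:Z\xrightarrow{\sim}S$, so $Z=\text{supp}(F)$ is a section of $\hat\pi$ and $F$ is a line bundle on it.

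I expect the main obstacle to be this last point: a priori the scheme-theoretic support of $F$ could be a nonreduced thickening of the section, and flatness together with the length-$1$ condition on $F|_s$ does not rule this out directly. The device that overcomes it is the surjection $\hat\pi^\ast L\twoheadrightarrow F$, which forces $F$ to be a line bundle on its scheme-theoretic support $Z$ and thereby transfers the length-$1$ condition from $F|_s$ to $\OO_{Z_s}$, collapsing the possible thickening. A secondary point requiring care is the passage between the degree of $F$ on the generic fibre and the length of $F|_s$, which is exactly where flatness and the constancy of the Hilbert polynomial are used.
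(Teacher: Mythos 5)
Your argument is correct and complete. Note first that there is nothing in this paper to compare it against: the authors give no proof of Lemma \ref{lemma15}, only the citation to \cite[Lemma 5.8]{Lo5} together with the remark that the proof there extends to higher-dimensional bases, so your write-up is necessarily an independent reconstruction. As such it holds up. The finiteness step is the standard one (flatness forces the Hilbert polynomial of $F|_s$ to equal that of the $0$-dimensional generic-fibre restriction, so $\hat{\pi}|_{\supp(F)}$ is quasi-finite and proper), and the membership $F \in \mathcal B_Y^\circ$ is indeed already the content of Lemma \ref{lemma14}, so quoting it is legitimate and non-circular. The real substance is your treatment of the case $d(F)=1$, and the device you use -- showing $L = \hat{\pi}_\ast F$ is a line bundle via cohomology and base change, then proving the counit $\hat{\pi}^\ast L \to F$ is fibrewise surjective and hence surjective -- is an efficient way to kill both potential pathologies at once: it simultaneously exhibits $F$ as a line bundle on its scheme-theoretic support $Z$ and transfers the length-one condition to $\OO_{Z_s}$, which collapses any thickening and any failure of local freeness without needing a separate appeal to purity of $\supp(F)$ or to Zariski's main theorem on the reduced support. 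Two places deserve one extra sentence each: the application of Grauert's theorem to $\hat{\pi}_\ast \OO_Z$ requires $\OO_Z$ to be flat over $S$, which you should note follows from $\OO_Z \cong F \otimes \hat{\pi}^\ast L^{-1}$; and the identification of $d(F)$ with the length of $F|_s$ is cleanest if phrased as $d(F) = \deg(F|_{Y_\eta}) = \mathrm{length}(F|_{Y_\eta})$, which is then the constant value of the Hilbert polynomial of the flat family. Neither point is a gap, only a matter of making the justification explicit.
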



\begin{lemma}\label{lemma16}
Suppose $a=1,b=0$ and  $\pi$ is a Weierstrass fibration.  Take any $F \in \mathcal D_Y'$ such that $d(F)=1$.  Then $E := \widehat{F}$ fits in a short exact sequence in $\Coh (X)$
\begin{equation}\label{eq4}
  0 \to E \to E^{\ast \ast} \to T \to 0
\end{equation}
where $E^{\ast \ast}$ is a $\Psi$-WIT$_1$ line bundle whose semistability locus is all of $S$, and $T$ is also $\Psi$-WIT$_1$ and lies in $\mathcal B_X$.  Moreover, $E^{\ast \ast}$ lies in $\mathcal D_X$.
\end{lemma}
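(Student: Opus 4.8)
The plan is to realise $E^{**}$ as the reflexive hull of $E=\hat F$ and then to verify, step by step, the conditions defining $\mathcal{D}_X$ in Proposition \ref{prop3}, after which the WIT$_1$ and semistability assertions follow. First I would fix the numerics. Since $F\in\mathcal D_Y$ has $r(F)=0$ and $d(F)=1$, the transformation rule \eqref{eqn-rdunderPhi} with $a=1$ and $b=0$ gives $r(E)=c\cdot r(F)+a\cdot d(F)=1$, so $E$ is a rank-one torsion-free sheaf; as $E\in\mathcal D_X$ (Proposition \ref{prop3}) we have $\mu(E)=b/a=0$, hence $d(E)=0$. Its double dual $E^{**}$ is then a rank-one reflexive sheaf on the smooth variety $X$, therefore a line bundle, and the canonical map $E\to E^{**}$ is injective with cokernel $T$ supported in codimension at least $2$. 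Consequently $c_1(E^{**})=c_1(E)$, so $d(E^{**})=0$ and $\mu(E^{**})=b/a$, while $r(T)=0$ and $c_1(T)=0$ force $d(T)=0$; this produces the sequence \eqref{eq4} together with the membership $T\in\mathcal B_X$.

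Next I would prove that $E^{**}$ is $\Psi$-WIT$_1$, first fibrewise and then globally. As a locally free sheaf, $E^{**}$ is flat over $S$, and for every $s$ the restriction $E^{**}|_s$ is a degree-zero line bundle on the integral Gorenstein genus-one fibre $\pi^{-1}(s)$; such a rank-one torsion-free sheaf is stable, so it represents a point of the relative moduli $Y_s$ and is therefore $\Psi_s$-WIT$_1$. This stability also shows at once that the semistability locus of $E^{**}$ is all of $S$. Invoking cohomology and base change for the relative transform (\cite[Corollary 6.2]{FMNT}, exactly as in the proof of Proposition \ref{prop3}), flatness over $S$ together with fibrewise WIT$_1$ promotes $E^{**}$ to a globally $\Psi$-WIT$_1$ sheaf.

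With this in hand I would conclude $E^{**}\in\mathcal D_X$: it is torsion-free of positive rank, flat over $S$, has $\mu=b/a$ and is fibrewise WIT$_1$, and, being reflexive and $\Psi$-WIT$_1$, it satisfies $\Ext^1(\mathcal B_X\cap W_{0,X},E^{**})=0$ by Theorem \ref{main1} (the Weierstrass fibres are Cohen--Macaulay with trivial dualising sheaf). For the WIT$_1$-ness of $T$ I would apply $\Psi$ to \eqref{eq4}; since $E$ and $E^{**}$ are both $\Psi$-WIT$_1$, the long exact sequence identifies $\Psi^0(T)$ with $\ker(\widehat E\to\widehat{E^{**}})$, which is a subsheaf of $\widehat E$. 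Now $\widehat E=F$, because $\Psi(E)=\Psi\Phi F=F[-1]$ by assumption (vi), and $F\in\mathcal B_Y^\circ$ by Lemma \ref{lemma15}, whereas $\Psi^0(T)\in\mathcal B_Y$ by Lemma \ref{lemma5}. A nonzero subsheaf of $F$ lying in $\mathcal B_Y$ would contradict $F\in\mathcal B_Y^\circ$, so $\Psi^0(T)=0$ and $T$ is $\Psi$-WIT$_1$.

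The main obstacle I expect is the fibrewise WIT$_1$ statement for $E^{**}|_s$ over the singular fibres: on a smooth fibre a degree-zero line bundle is classically WIT$_1$, but handling every integral Gorenstein genus-one fibre uniformly requires the fibrewise Fourier--Mukai theory for Weierstrass fibrations and the moduli interpretation of $Y$. Once this is granted, the remaining points are bookkeeping with Chern classes and a formal consequence of the long exact sequence combined with the torsion-pair properties already recorded in Lemmas \ref{lemma5} and \ref{lemma15}.
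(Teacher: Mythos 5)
Your proof is correct, and its overall skeleton matches the paper's: identify $E^{**}$ as a line bundle via reflexivity on a smooth variety, observe $T\in\mathcal B_X$ from the codimension bound, get the $\Ext^1$-vanishing for $E^{**}$ from Theorem \ref{main1}, and kill $\Psi^0(T)$ by exhibiting it as a subsheaf of $\hat E=F\in\mathcal B_Y^\circ$ (the paper cites Lemma \ref{lemma14} for this membership rather than Lemma \ref{lemma15}, but both state it). The one step where you genuinely diverge is the proof that $E^{**}$ is $\Psi$-WIT$_1$. You argue fibrewise: $E^{**}|_s$ is a degree-zero line bundle on an integral fibre, hence stable, hence $\Psi_s$-WIT$_1$ by the Weierstrass fibrewise theory (\cite[Proposition 6.51]{FMNT}), and flatness plus \cite[Corollary 6.2]{FMNT} promotes this to a global WIT$_1$ statement. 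The paper instead runs a torsion-pair argument: it decomposes $E^{**}$ as $0\to L_0\to E^{**}\to L_1\to 0$ with $L_i\in W_{i,X}$, feeds the injection $\Psi^0(E^{**})\hookrightarrow\Psi^0(T)$ (coming from $E$ being WIT$_1$) into Lemmas \ref{lemma3} and \ref{lemma5} to force $L_0\in\mathcal B_X$, and derives a contradiction with $L_0$ having rank one. Your route is shorter and reuses machinery the paper already deploys both in Proposition \ref{prop3} and at the end of this very proof (for the semistability-locus claim), at the cost of leaning earlier and more heavily on \cite[Proposition 6.51]{FMNT}; the paper's route is self-contained at that point, using only the abstract torsion pair $(W_{0,X},W_{1,X})$ and the interplay with $\mathcal B_X$, $\mathcal B_Y$. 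Both are sound, and you correctly flag the fibrewise WIT$_1$ statement over singular fibres as the load-bearing external input.
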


\begin{proof}
From the formula \eqref{eqn-rdunderPhi} and Proposition \ref{prop3}, we know that $E$ is a rank-one torsion-free sheaf.  Since the double dual $L := E^{\ast \ast}$ is a rank-one reflexive sheaf and $X$ is smooth, it is a line bundle \cite[Proposition 1.9]{SRS}.  Note that, since $T$ has codimension at least 2, we have $T \in \mathcal B_X$.  We now show that $L$ is $\Psi$-WIT$_1$:

Consider the short exact sequence in $\Coh (X)$:
\[
 0 \to L_0 \to L \to L_1 \to 0
\]
where $L_i \in W_{i,X}$.  Suppose $L$ is not $\Psi$-WIT$_1$; then $L_0 \neq 0$ and must be rank-one torsion-free, implying $L_1$ is $\Psi$-WIT$_1$ and torsion, and so $L_1 \in \mathcal B_X$ by Lemma \ref{lemma3}.  Now, applying $\Psi$ to \eqref{eq4} and taking the long exact sequence, we obtain an injection $0\to \Psi^0 (L) \to \Psi^0 (T)$.  Since $T \in \mathcal B_X$, by Lemma \ref{lemma5}, we have that $\Psi^0 (T)$ is a torsion sheaf.  Hence $\Psi^0 (L) = \widehat{L_0}$ and $\Psi^0(T)$ are both torsion, $\Phi$-WIT$_1$ sheaves, and must lie in $\mathcal B_Y$ by Lemma \ref{lemma3} again.  Hence $L_0 \in \mathcal B_X$, and $L$ itself lies in $\mathcal B_X$, which is a contradiction.  Therefore, we obtain that $L$ must be $\Psi$-WIT$_1$.  By Theorem \ref{main1}, we obtain $\Ext^1 (\mathcal B_X \cap W_{0,X}, L)=0$ as well.

On the other hand, applying $\Psi$ to the exact sequence \eqref{eq4} and then taking the long exact sequence, we obtain an injection $\Psi^0(T) \hookrightarrow\widehat{E} = F$.  Lemma \ref{lemma14}, however, tells us that $F \in \mathcal B^\circ_Y$.  Hence $\Psi^0(T)$ must vanish, i.e.\ $T$ is in fact $\Psi$-WIT$_1$.

It remains to show that the semistability locus of $L$ is all of $S$.  Take any closed point $s \in S$.  Then the restriction $L|_s$  is a rank-one locally free (hence torsion-free and $\mu$-semistable, since every fibre of a Weierstrass fibration is integral by assumption) sheaf on $X_s$.  Hence, by \cite[Proposition 6.51]{FMNT}, the restriction $F|_s$ is $\Psi_s$-WIT$_1$ (and $\mu$-semistable) for all $s \in S$, and the semistability locus of $F$ is the entirety of $S$.  This completes the proof of the lemma.
\end{proof}

\begin{remark}\label{remark3}
  Using the same notation as in Lemma \ref{lemma16} and its proof, we can tensor every term in \eqref{eq4} with $L^\ast$ to see that $E \otimes L^\ast$ is isomorphic to the ideal sheaf $I_C$ of some subscheme $C \subseteq X$ of codimension at least 2, while $T \otimes L^\ast \cong \OO_C$.  Since $T$ is $\Psi$-WIT$_1$ by Lemma \ref{lemma16}, all its subsheaves are $\Psi$-WIT$_1$ as well.  Therefore, when $X$ is a threefold, $T$ cannot have any 0-dimensional subsheaves, i.e.\ $T$ is a pure 1-dimensional sheaf if nonzero, in which case $C$ would be a pure 1-dimensional closed subscheme of $X$.
  \end{remark}

\begin{lemma}\label{lemma17}
Let $a,b, \pi$ be as in Lemma \ref{lemma16}.  Let $E$ be a rank-one torsion-free sheaf on $X$ with $\mu (E)=0$.  Then $E$ satisfies the vanishing condition \eqref{eq1} if and only if the cokernel $T$ of the canonical injection $E \hookrightarrow E^{\ast\ast}$ is $\Psi$-WIT$_1$.
\end{lemma}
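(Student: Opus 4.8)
I want to prove the equivalence: the rank-one torsion-free sheaf $E$ with $\mu(E)=0$ satisfies $\Ext^1(\mathcal{B}_X \cap W_{0,X}, E)=0$ if and only if the cokernel $T$ of $E \hookrightarrow E^{\ast\ast}$ is $\Psi$-WIT$_1$. The natural strategy is to exploit the short exact sequence
\[
0 \to E \to E^{\ast\ast} \to T \to 0,
\]
where $E^{\ast\ast}=:L$ is a line bundle (being a rank-one reflexive sheaf on a smooth variety, by \cite[Proposition 1.9]{SRS}), and $T$ is supported in codimension at least $2$, so $T \in \mathcal{B}_X$. The first key observation is that $L$, being a line bundle and hence reflexive, satisfies $\Ext^1(\mathcal{B}_X \cap W_{0,X}, L)=0$ automatically whenever $L$ is $\Psi$-WIT$_1$; and since $\mu(L)=\mu(E)=0=b/a$ with $a=1,b=0$, Lemma \ref{lemma10} (applied fibrewise, using that every fibre of the Weierstrass fibration is integral so the restriction is stable) shows $L$ is $\Psi$-WIT$_1$. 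Thus Theorem \ref{main1} gives $\Ext^1(\mathcal{B}_X \cap W_{0,X}, L)=0$ unconditionally.

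\textbf{The forward direction.} Assuming $T$ is $\Psi$-WIT$_1$, I would apply the Fourier--Mukai transform $\Psi$ to the defining short exact sequence and take the long exact cohomology sequence. Since $E$, $L$, and $T$ are all $\Psi$-WIT$_1$, this yields a short exact sequence of transformed sheaves $0 \to \hat{E} \to \hat{L} \to \hat{T} \to 0$ on $Y$. Alternatively, and more directly, I would use Lemma \ref{lemma9}, which identifies $\Ext^1(\mathcal{B}_X \cap W_{0,X}, E)$ with $\Hom(\mathcal{B}_Y \cap W_{1,Y}, \hat{E})$ when $E$ is $\Phi$-WIT$_0$-dual-compatible; here the cleaner route is to apply the functor $\Hom(A, -)$ for $A \in \mathcal{B}_X \cap W_{0,X}$ to the sequence $0 \to E \to L \to T \to 0$ and extract the relevant $\Ext$ long exact sequence
\[
\Hom(A, T) \to \Ext^1(A, E) \to \Ext^1(A, L).
\]
Since $\Ext^1(A,L)=0$ by the above, the vanishing of $\Ext^1(A,E)$ reduces to the vanishing of the image of $\Hom(A,T)$; and $\Hom(A,T)=0$ for all $A \in \mathcal{B}_X \cap W_{0,X}$ precisely when $T$ has no $W_{0,X}$-part in the relevant sense, which is guaranteed once $T$ is $\Psi$-WIT$_1$ (a $\Psi$-WIT$_1$ sheaf admits no nonzero map from a $\Psi$-WIT$_0$ sheaf).

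\textbf{The reverse direction.} For the converse, I would argue contrapositively. Write the canonical torsion-pair sequence $0 \to T_0 \to T \to T_1 \to 0$ with $T_0 \in W_{0,X}$ and $T_1 \in W_{1,X}$, and suppose $T$ is \emph{not} $\Psi$-WIT$_1$, so $T_0 \neq 0$. Since $T \in \mathcal{B}_X$ and $T_0$ is a subsheaf, $T_0 \in \mathcal{B}_X \cap W_{0,X}$ is nonzero. I then want to produce a nonzero class in $\Ext^1(T_0, E)$. Here the key is to show that the connecting map $\Hom(T_0, T) \to \Ext^1(T_0, E)$ is nonzero, using again that $\Ext^1(T_0, L)=0$ makes the tail of the long exact sequence exact in the right place, so the composite $\Hom(T_0, T_0) \hookrightarrow \Hom(T_0,T) \to \Ext^1(T_0,E)$ carries the identity to a nonzero extension class (the identity cannot lift to a map $T_0 \to E \subseteq L$ because $L$ is $\Psi$-WIT$_1$ and $T_0$ is $\Psi$-WIT$_0$, forcing $\Hom(T_0, L)=0$, hence $\Hom(T_0,E)=0$). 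This shows $\Ext^1(\mathcal{B}_X \cap W_{0,X}, E) \neq 0$.

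\textbf{The main obstacle.} The delicate point is the reverse direction: I must verify that the connecting homomorphism is genuinely nonzero, which rests on the vanishing $\Hom(T_0, L)=0$ for the $\Psi$-WIT$_0$ subsheaf $T_0$ against the $\Psi$-WIT$_1$ line bundle $L$. This uses the fundamental rigidity of WIT dichotomy under $\Psi$ (no nonzero morphisms from WIT$_0$ to WIT$_1$), together with the crucial fact that $\Ext^1(\mathcal{B}_X \cap W_{0,X}, L)=0$, which is exactly where Theorem \ref{main1} enters. I expect the cleanest execution to hinge on combining Theorem \ref{main1} applied to $L$ with the long exact $\Ext$ sequence, so that the entire statement collapses to tracking the WIT$_0$-part $T_0$ of the cokernel $T$.
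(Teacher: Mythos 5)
Your proof is essentially correct, and one half of it takes a genuinely different route from the paper's. The direction ``$T$ is $\Psi$-WIT$_1$ $\Rightarrow$ \eqref{eq1} holds'' is exactly the paper's argument: the exact sequence $\Hom(A,T) \to \Ext^1(A,E) \to \Ext^1(A,L)$ with the outer terms killed, respectively, by the WIT dichotomy and by Theorem \ref{main1} applied to the reflexive $\Psi$-WIT$_1$ sheaf $L = E^{\ast\ast}$. For the other direction, however, the paper does not argue contrapositively on $X$: it first deduces that $E$ itself is $\Psi$-WIT$_1$ (as a subsheaf of $L$), invokes the equivalence of Lemma \ref{lemma7} to conclude $\hat E \in \mathcal B_Y^\circ$, and then kills $\Psi^0(T)$ via the injection $\Psi^0(T) \hookrightarrow \hat E$ together with Lemma \ref{lemma5}. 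Your contrapositive --- extracting the $\Psi$-WIT$_0$ part $T_0 \subseteq T$, noting $T_0 \in \mathcal B_X \cap W_{0,X}$, and observing that the connecting map $\Hom(T_0,T) \to \Ext^1(T_0,E)$ is injective because $\Hom(T_0,L)=0$, so the inclusion $T_0 \hookrightarrow T$ produces a nonzero extension class --- stays entirely on $X$, avoids Lemma \ref{lemma7}, and makes transparent that both conditions measure the same thing, namely the $\Psi$-WIT$_0$ content of $T$. Both arguments are valid; yours is the more elementary of the two.

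One citation needs repair. You justify ``$L$ is $\Psi$-WIT$_1$'' by Lemma \ref{lemma10}, but that lemma requires $\mu < b/a$ strictly, whereas here $\mu(L) = 0 = b/a$, so it does not apply as stated. The correct argument is the one in the proof of Lemma \ref{lemma16}: if $L$ were not $\Psi$-WIT$_1$, its $\Psi$-WIT$_0$ part $L_0$ would be rank one with torsion quotient lying in $\mathcal B_X$ by Lemma \ref{lemma3}, forcing $\mu(L_0) = \mu(L) = b/a$ and contradicting Lemma \ref{lemma4}. Since this fact is needed in both directions of your proof, make the substitution; everything else goes through.
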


\begin{proof}
To begin with, suppose $E$ satisfies \eqref{eq1}.  Let $L := E^{\ast\ast}$.  By the same argument as in  Lemma \ref{lemma16}, we obtain that $L$ is $\Psi$-WIT$_1$.    Then $E$ itself is $\Psi$-WIT$_1$, since it is a subsheaf of $L$.  By Lemma \ref{lemma7}, we have $\widehat{E} \in \mathcal B^\circ_Y$.  Since we have an injection $\Psi^0(T)\hookrightarrow \widehat{E}$, we get $\Psi^0(T)=0$, and so $T$ is $\Psi$-WIT$_1$.

For the converse, suppose $T$ is $\Psi$-WIT$_1$.  We still have that $L$ is $\Psi$-WIT$_1$.  For any $A \in \mathcal B_X \cap W_{0,X}$, we have the exact sequence
\[
\Hom (A,T) \to \Ext^1 (A,E) \to \Ext^1 (A,L)
\]
from \eqref{eq4}.  Since $A$ is $\Psi$-WIT$_0$ and $T$ is $\Psi$-WIT$_1$, we have $\Hom (A,T)=0$.  On the other hand, $\Ext^1 (A,L)=0$ by Theorem \ref{main1}.  Hence $\Ext^1 (A,E)$ vanishes, and the lemma is proved.
\end{proof}

%
%

\begin{remark}\label{remark4}
Take any $E \in \mathcal D_X$ of rank one, and suppose $\dimension X = 2$.  The cokernel of $E \hookrightarrow E^{\ast \ast}$ is 0-dimensional, and so must be zero since it is $\Psi$-WIT$_1$ by Lemma \ref{lemma17}.  Hence $E$ is locally free. The equivalence of categories in Proposition \ref{prop3} thus reduces to the last equivalence in \cite[Proposition 5.7]{Lo5}.
\end{remark}

\begin{lemma}\label{lemma20}
Let $T$ be a $\Psi$-WIT$_1$ coherent sheaf of codmension at least 2 on $X$.  Then $\dimension (\pi_\ast T) = \dimension (T) -1$, i.e.\ for a general closed point $s \in \text{supp}(\pi_\ast T)$, the restriction $T|_s$ is 1-dimensional.
\end{lemma}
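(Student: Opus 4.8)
The plan is to reduce the statement to a fibrewise dimension count and to prove the only nontrivial point, namely that $T$ cannot restrict to a $0$-dimensional sheaf on a general fibre of $\pi$ over the relevant locus; the rest is the fibre-dimension theorem. Write $Z := \supp(T)$ and let $W := \pi(Z)$ be its (reduced) image, so that $\supp(\pi_\ast T) \subseteq W$. Since the fibres of $\pi$ are $1$-dimensional, $T|_s$ is either $0$-dimensional or $1$-dimensional for each $s$, and $T|_s \neq 0$ whenever $s \in W$. Thus it suffices to show that, for a general $s \in W$, the sheaf $T|_s$ is not $0$-dimensional: granting this, the morphism $\pi|_Z \colon Z \to W$ has $1$-dimensional general fibre, whence $\dimension Z = \dimension W + 1$ by the fibre-dimension theorem, and this is the asserted equality $\dimension (\pi_\ast T) = \dimension T - 1$ in the form given in the statement. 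Indeed, a general point of $\supp(\pi_\ast T)$ has $(\pi_\ast T)_s \neq 0$, so by cohomology and base change $H^0(X_s, T|_s) \neq 0$, forcing $T|_s$ to be the $1$-dimensional sheaf just described.

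The crux is a contradiction argument. Suppose instead that $T|_s$ is $0$-dimensional for every $s$ in a dense open $U \subseteq W$; by generic flatness we may shrink $U$ so that $T|_{\pi^{-1}(U)}$ is in addition flat over $U$. A $0$-dimensional sheaf on a fibre is $\Psi_s$-WIT$_0$ (as in the proof of Theorem \ref{main1}, via the results of \cite{FMNT}), so $T|_{\pi^{-1}(U)}$ is fibrewise $\Psi_s$-WIT$_0$ and flat over $U$. By \cite[Corollary 6.2]{FMNT} this sheaf is then relatively $\Psi$-WIT$_0$, and since $\Psi$ is a relative Fourier-Mukai transform over $S$ it commutes with restriction over $U$; hence $\Psi(T)|_{\hat\pi^{-1}(U)} \cong \Psi_U(T|_{\pi^{-1}(U)})$ is concentrated in degree $0$. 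On the other hand $T$ is globally $\Psi$-WIT$_1$, so $\Psi(T) \cong \hat T[-1]$ is concentrated in degree $1$, and this remains true after restricting to $\hat\pi^{-1}(U)$. A complex concentrated simultaneously in degrees $0$ and $1$ vanishes, so $\hat T|_{\hat\pi^{-1}(U)} = 0$.

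To close the argument I would use that a relative Fourier-Mukai transform preserves support over the base. From $\Phi \Psi = \mathrm{id}_{D(X)}[-1]$ and $\Psi(T) \cong \hat T[-1]$ one gets $T \cong \Phi(\hat T)$; and since both $\Psi$ and $\Phi$ have kernels supported on $X \times_S Y$, for any open $V \subseteq S$ the vanishing of a complex over $\pi^{-1}(V)$ (resp. $\hat\pi^{-1}(V)$) forces the vanishing of its transform over $\hat\pi^{-1}(V)$ (resp. $\pi^{-1}(V)$). Applying this to $T \cong \Phi(\hat T)$ shows that $\hat T$ cannot vanish over $\hat\pi^{-1}(U)$ for any nonempty open $U \subseteq W$: otherwise $\Phi(\hat T) \cong T$ would vanish over $\pi^{-1}(U)$, contradicting $U \subseteq W = \pi(\supp T)$. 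This contradicts the vanishing $\hat T|_{\hat\pi^{-1}(U)} = 0$ obtained above. Hence the general fibre of $\pi|_Z$ is $1$-dimensional, completing the proof.

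I expect the main obstacle to be the fibrewise input together with the base-change bookkeeping: one must know that a $0$-dimensional sheaf is $\Psi_s$-WIT$_0$ on the (possibly singular, Cohen-Macaulay) fibres occurring over $W$ — which may well lie inside the discriminant of $\pi$ — and one must be careful that the passage from fibrewise WIT$_0$ plus flatness to relative WIT$_0$, and the base-change isomorphism $\Psi(T)|_{\hat\pi^{-1}(U)} \cong \Psi_U(T|_{\pi^{-1}(U)})$, are applied over a dense open of the \emph{image} $W$ rather than of $S$ itself. These are precisely the technical ingredients already assembled in the proof of Theorem \ref{main1}, so the genuinely new content is the clean support-preservation contradiction rather than any new estimate.
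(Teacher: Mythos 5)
Your proof is correct and follows essentially the same strategy as the paper's: assume the general fibre restriction over $\mathrm{supp}(\pi_\ast T)$ is $0$-dimensional, use that $0$-dimensional sheaves are fibrewise WIT$_0$ together with base change for the relative Fourier--Mukai transform to force $\hat{T}$ (hence $T$, via $\Phi$) to vanish generically over the image, and derive a contradiction. The only cosmetic difference is that you invoke generic flatness and \cite[Corollary 6.2]{FMNT} over a dense open of the image, whereas the paper base-changes to the closed subscheme $\mathrm{supp}(\pi_\ast T)$ and applies \cite[Corollary 6.3]{FMNT} pointwise; both handle the same bookkeeping you flag at the end.
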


\begin{proof}
Since $\pi$ is a fibration of relative dimension 1, it suffices to show $\dimension (\pi_\ast T) \leq \dimension (T) - 1$.  Suppose $\dimension (\pi_\ast T) = \dimension (T) = n-2$.  Then for a general closed point $s \in S_1 := \text{supp}(\pi_\ast T)$, the restriction $T|_s$ is 0-dimensional.  Let $\iota$ denote the closed immersion $S_1 \hookrightarrow S$, and
 \[
  \iota_X : X_{S_1} \hookrightarrow X, \text{\quad} \iota_Y : Y_{S_1} \hookrightarrow Y
  \]
  be the corresponding closed immersions  obtained after base change.  We have $T = {\iota_X}_\ast \tilde{T}$ for some coherent sheaf $\tilde{T}$ on $X_{S_1}$.  Since
\[
\widehat{T}[-1] \cong \Psi (T)  =   \Psi ({\iota_X}_\ast \tilde{T}) \cong {\iota_Y}_\ast (\Psi_{S_1} (\tilde{T}))
\]
by base change (see \cite[(6.3) and Proposition A.85]{FMNT}), we see that $\tilde{T}$ itself is $\Psi_{S_1}$-WIT$_1$.  By \cite[Corollary 6.3]{FMNT}, for any closed point $s \in S_1$ we have
\begin{equation}\label{eq22}
(  \Psi_{S_1}^1 (\tilde{T}) )|_s \cong \Psi_s^1 (\tilde{T}|_s).
\end{equation}
For a general closed point $s \in S_1$, however, the restriction $\tilde{T}|_s$ is a 0-dimensional sheaf (since $\dimension \tilde{T} = \dimension T = \dimension (\pi_\ast T)$ by assumption), which is $\Psi_s$-WIT$_0$.  Hence the right-hand side of \eqref{eq22} vanishes for a general $s \in S_1$, and so the left-hand side of \eqref{eq22} also vanishes for a general $s \in S_1$.  Since $\tilde{T}$ is $\Psi_{S_1}$-WIT$_1$, this implies that $\tilde{T}|_s$ vanishes for a general $s \in S_1$, contradicting our assumption that $S_1$ is the support of $\pi_\ast T$.  Therefore, it must be the case that $\dimension (\pi_\ast T) \leq \dimension (T) -1$.
\end{proof}


\begin{lemma}\label{lemma21}
Let $a,b, \pi$ be as in Lemma \ref{lemma16}.  Any rank-one object in $\mathcal D_X$ is a locally free sheaf.
\end{lemma}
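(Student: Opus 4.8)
The plan is to embed $E$ into its reflexive hull, transport the resulting sequence to $Y$ via $\Psi$, and exploit the rigidity of line bundles on sections to force the cokernel to vanish. Since $E$ is torsion-free of rank one on the smooth variety $X$, its double dual $L := E^{\ast\ast}$ is a line bundle by \cite[Proposition 1.9]{SRS}, and the canonical inclusion fits into a short exact sequence
\[
0 \to E \to L \to T \to 0
\]
with $T$ supported in codimension at least two, so $T \in \mathcal B_X$. Because $E \in \mathcal D_X$ satisfies the vanishing \eqref{eq1} and $\mu(E) = b/a = 0$, Lemma \ref{lemma17} shows that $T$ is $\Psi$-WIT$_1$; by the same argument already used in Lemma \ref{lemma16}, the line bundle $L$ is $\Psi$-WIT$_1$ as well (and lies in $\mathcal D_X$). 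I would then argue by contradiction, assuming $T \neq 0$, and show that $T$ is forced to have codimension one.

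First I would apply the functor $\Psi[1]$ to the sequence above. Since $E$, $L$ and $T$ are all $\Psi$-WIT$_1$, taking cohomology sheaves turns it into a short exact sequence of sheaves on $Y$,
\[
0 \to \hat E \to \hat L \to \hat T \to 0 .
\]
Because $c_1(L) = c_1(E)$ (the two sheaves differ only in codimension $\geq 2$) we have $d(E) = d(L) = 0$, so a direct computation with \eqref{eqn-rdunderPsi} and $a=1,\, b=0$ gives $d(\hat E) = d(\hat L) = 1$. As $\hat E, \hat L \in \mathcal D_Y$ are pure of codimension one and flat over $S$, Lemma \ref{lemma15} shows that both are line bundles supported on sections of $\hat\pi$. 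The injection $\hat E \hookrightarrow \hat L$ forces their supporting sections to coincide in a single section $\Gamma \cong S$, which is smooth and integral.

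The key point is then that $\hat T$ is the cokernel of an injection $\hat E \hookrightarrow \hat L$ of line bundles on the smooth integral variety $\Gamma$. Such a cokernel is either zero or supported on an effective divisor $D \subset \Gamma$ and nonzero only in codimension one; thus if $\hat T \neq 0$ then $\dimension \hat T = n-2$ and $\bar D := \hat\pi(D) \subseteq S$ has dimension $n-2$. Since $T$ is $\Psi$-WIT$_1$ we have $\Phi\hat T \cong T$ with $\hat T$ being $\Phi$-WIT$_0$. Restricting to a general closed point $s \in \bar D$ and invoking base change exactly as in the proof of Lemma \ref{lemma20} (via \cite[(6.3), Corollary 6.3]{FMNT}), I would obtain $T|_{X_s} \cong \Phi_s(\hat T|_{Y_s})$, where $\hat T|_{Y_s}$ is a nonzero $0$-dimensional sheaf supported at the point $\Gamma \cap Y_s$. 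Its fibrewise transform $\Phi_s(\hat T|_{Y_s})$ has positive rank on the elliptic curve $X_s$, so $\supp T$ contains $X_s$ for general $s \in \bar D$, whence $\dimension T \geq (n-2)+1 = n-1$.

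This contradicts $T \in \mathcal B_X$, which forces $\dimension T \leq n-2$; equivalently, one may note that $\dimension(\pi_\ast T) = \dimension \bar D = n-2$ and then apply Lemma \ref{lemma20} to reach $\dimension T = n-1$, the same contradiction. Therefore $T = 0$, i.e.\ $E \cong E^{\ast\ast}$ is locally free. The main obstacle in this argument is the fibrewise base-change step identifying $T|_{X_s}$ with $\Phi_s(\hat T|_{Y_s})$ and recognising that a nonzero $0$-dimensional sheaf transforms to a positive-rank sheaf on the fibre; however, this is precisely the mechanism already carried out in the proof of Lemma \ref{lemma20}, so it transfers without essential change.
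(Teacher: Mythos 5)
Your argument is correct, and it follows the paper's proof for its first two thirds: both pass to the double dual $L=E^{\ast\ast}$, use Lemmas \ref{lemma16} and \ref{lemma17} to make every term of $0\to E\to L\to T\to 0$ $\Psi$-WIT$_1$ with $L\in\mathcal D_X$, transform to $0\to\hat E\to\hat L\to\hat T\to 0$ on $Y$, and invoke Proposition \ref{prop3} and Lemma \ref{lemma15} to realise $\hat E$ and $\hat L$ as line bundles on a common section $\Gamma\cong S$ of $\hat\pi$. The endgames genuinely differ. The paper applies Lemma \ref{lemma20} to get $\dimension(\pi_\ast T)\leq n-3$, deduces $\dimension\hat T\leq n-3$ (so $\hat T$ has codimension $\geq 2$ on $\Gamma$), and then contradicts the normality of reflexive sheaves via \cite[Corollary 1.5]{SRS}: a line bundle on $\Gamma$ admits no proper subsheaf agreeing with it outside codimension $2$. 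You instead observe that the cokernel of an injection of line bundles on the integral variety $\Gamma$ is, if nonzero, supported on an effective divisor, so $\dimension\hat T=n-2$ exactly, and then transport back to $X$ fibrewise to force $\dimension T\geq n-1$, contradicting the fact that the cokernel of $E\to E^{\ast\ast}$ has codimension $\geq 2$. This is the same dimension tension approached from the opposite end: it trades the reflexive-sheaf input \cite[Corollary 1.5]{SRS} for a second pass through the fibrewise Fourier--Mukai mechanism of Lemma \ref{lemma20}. Two small points of hygiene: the identification $T|_{X_s}\cong\Phi_s(\hat T|_{Y_s})$ is the bottom-index (WIT$_0$) base change, so after restricting the transform to $\bar D$ you need generic flatness of $\hat T$ over $\bar D$ together with \cite[Corollary 6.2]{FMNT}, rather than the top-index Corollary 6.3 used in Lemma \ref{lemma20} (equivalently, argue through $\Psi^1_s(T|_{X_s})=0$ exactly as there); and your parenthetical shortcut via $\dimension(\pi_\ast T)=n-2$ requires $\supp(\pi_\ast T)$, not merely $\pi(\supp(T))$, to have dimension $n-2$, so the fibrewise version is the one to keep.
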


\begin{proof}
Let $E$ be any rank-one object in $\mathcal D_X$, and let $T$ be as in \eqref{eq4}.  The argument in the proof of Lemma \ref{lemma16} shows that all the terms in \eqref{eq4} are $\Psi$-WIT$_1$, and that $E^{\ast\ast} \in \mathcal D_X$.  Thus  we obtain a short exact sequence in $\Coh (Y)$
\begin{equation}\label{eq23}
0 \to \widehat{E} \to \widehat{E^{\ast\ast}} \to \widehat{T} \to 0
\end{equation}
in which all the terms are $\Phi$-WIT$_0$.

By Proposition \ref{prop3} and Lemma \ref{lemma15}, we know that we have a section $\theta : S \to \text{supp}(\widehat{E^{\ast\ast}})$ of $\widehat{\pi}$.  And, if we write $\kappa$ for the closed immersion $\theta (S) \hookrightarrow Y$, then $\widehat{E^{\ast\ast}} \cong \kappa_\ast A$ for some line bundle $A$ on $\theta (S)$.  Applying the same argument to $\widehat{E}$, we see that $\text{supp}(\widehat{E}) = \text{supp}(\widehat{E^{\ast\ast}})$, and $\widehat{E} \cong \kappa_\ast A'$ for some line bundle $A'$ on $\theta (S)$.

Now, by Lemma \ref{lemma20}, we have $\dimension (\pi_\ast T) \leq n-3$.  Since $\widehat{E^{\ast\ast}}$ is supported on a section of $\widehat{\pi}$, and the support of $\widehat{T}$ is contained in the support of $\widehat{E^{\ast\ast}}$, for any closed point $s \in \text{supp}(\pi_\ast T)$, the restriction $\widehat{T}|_s$ must be 0-dimensional.  Hence $\dimension \widehat{T} \leq n-3$.    On the other hand, we can write $\widehat{T} = \kappa_\ast T'$ for some coherent sheaf $T'$ on $\text{supp}(\widehat{E^{\ast\ast}})$.  Then $T'$ has codimension at least 2 as a coherent sheaf on $\theta (S)$, and we have a short exact sequence
\[
  0 \to \kappa_\ast A' \to \kappa_\ast A \to \kappa_\ast T' \to 0 \text{\quad in $\Coh (Y)$},
\]
which gives a short exact sequence
\begin{equation}\label{eq25}
  0 \to A' \to A \to T' \to 0 \text{\quad in $\Coh (\theta (S))$}.
\end{equation}
However, in the short exact sequence \eqref{eq25}, both $A'$ and $A$ are reflexive sheaves, while $T'$ has codimension at least 2 on $\theta (S)$, contradicting \cite[Corollary 1.5]{SRS} if $\widehat{T}$ is nonzero.
Hence $\widehat{T}$ must vanish, i.e.\ $E$ itself is a line bundle on $X$.
\end{proof}

Putting the above results together, we obtain:

\begin{theorem}\label{theorem6}
Let $a,b, \pi$ be as in Lemma \ref{lemma16}.  The equivalence \eqref{eq20} in Proposition \ref{prop3} restricts to an equivalence
\begin{multline}\label{eq24}
\{ \text{line bundles of fibre degree 0 on $X$}\}  \\
      \leftrightarrow \{ F \in \Coh (Y) : F \text{ is pure of codimension 1, flat over $S$, $d(F)=1$}\} \\
      =  \{ \tau_
      \ast L : \tau \text{ is a section of }\widehat{\pi}, L \in \text{Pic}(S)\}.
\end{multline}
\end{theorem}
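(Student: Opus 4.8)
The plan is to realise \eqref{eq24} as the restriction of the equivalence \eqref{eq20} to the rank-one objects of $\mathcal D_X$, and then to identify these rank-one objects explicitly on both sides. \textbf{Stage 1 (numerical bookkeeping).} First I would use the hypothesis $a=1,b=0$ to pin down the matrix in \eqref{eqn-rdunderPsi}: since $\begin{pmatrix} c & a \\ d & b\end{pmatrix}\in\text{SL}_2(\mathbb{Z})$ with $a=1,b=0$ forces $d=-1$, the matrix in \eqref{eqn-rdunderPsi} becomes $\begin{pmatrix} 0 & 1 \\ -1 & -c\end{pmatrix}$. For any $E\in\mathcal D_X$ we have $\mu(E)=b/a=0$, hence $d(E)=0$; and since $E$ is $\Psi$-WIT$_1$ we have $\Psi[1](E)=\hat E$ with $\text{ch}(\Psi E)=-\text{ch}(\hat E)$. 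A direct computation then gives $r(\hat E)=0$ (consistent with $\hat E$ being a codimension-1 sheaf) and, crucially, $d(\hat E)=r(E)$. Thus under \eqref{eq20} the rank-one objects of $\mathcal D_X$ correspond precisely to the objects $F\in\mathcal D_Y$ with $d(F)=1$.

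\textbf{Stage 2 (rank-one objects of $\mathcal D_X$ are exactly the fibre-degree-$0$ line bundles).} One inclusion is immediate: by Lemma \ref{lemma21} every rank-one object of $\mathcal D_X$ is a line bundle, and it has $d(E)=r(E)\mu(E)=0$, so it is a line bundle of fibre degree $0$. For the reverse inclusion I would take an arbitrary line bundle $L$ on $X$ with $d(L)=0$ and check the defining conditions of $\mathcal D_X$: it is torsion-free of rank $1>0$, flat over $S$ (being locally free), with $\mu(L)=0=b/a$. For each $s\in S$ the restriction $L|_s$ is a degree-$0$ line bundle on the integral genus-$1$ Weierstrass fibre, hence stable and $\Psi_s$-WIT$_1$ by \cite[Proposition 6.51]{FMNT}, exactly as in the last paragraph of the proof of Lemma \ref{lemma16}. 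Base change \cite[Corollary 6.2]{FMNT} then promotes this to the global statement that $L$ is $\Psi$-WIT$_1$, and since $L$ is reflexive, Theorem \ref{main1} gives $\Ext^1(\mathcal B_X\cap W_{0,X},L)=0$. Hence $L\in\mathcal D_X$, and it is of rank one.

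\textbf{Stage 3 (identifying the $d=1$ objects of $\mathcal D_Y$).} Here Lemma \ref{lemma15} does the essential work: for $F\in\mathcal D_Y$ with $d(F)=1$, its support is a section $\tau$ of $\hat\pi$ and $F$ is a line bundle on $\text{supp}(F)$, so $F\cong\tau_\ast L$ for some $L\in\text{Pic}(S)$. Conversely, any such $\tau_\ast L$ is pure of codimension $1$ (a line bundle on the smooth divisor $\tau(S)\cong S$), is flat over $S$ (its restriction to each fibre of $\hat\pi$ is a length-one skyscraper), and has $d(\tau_\ast L)=1$ since the section meets the generic fibre in a single reduced point; thus $\tau_\ast L\in\mathcal D_Y$ with $d=1$. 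Assembling Stages 1--3 yields the three-way identification \eqref{eq24}. I expect the only genuinely non-formal point to be the reverse inclusion in Stage 2 --- showing that an arbitrary fibre-degree-$0$ line bundle actually lies in $\mathcal D_X$ --- since it is there that the Weierstrass hypothesis and the passage from the fibrewise to the global WIT$_1$ condition (via \cite[Proposition 6.51]{FMNT}, \cite[Corollary 6.2]{FMNT} and Theorem \ref{main1}) are really used; the remainder is numerical bookkeeping together with direct appeals to Lemmas \ref{lemma21} and \ref{lemma15}.
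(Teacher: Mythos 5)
Your proposal is correct and follows essentially the same route as the paper: identify the rank-one objects of $\mathcal D_X$ with the fibre-degree-$0$ line bundles (forward direction via Lemma \ref{lemma21}, reverse via \cite[Proposition 6.51]{FMNT} and Theorem \ref{main1}), and identify the $d=1$ objects of $\mathcal D_Y$ via Lemma \ref{lemma15}. The only difference is that you spell out the numerical bookkeeping ($d(\hat E)=r(E)$) and the fibrewise-to-global WIT$_1$ step explicitly, which the paper leaves implicit.
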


\begin{proof}
First, we show that the rank-one objects in the category $\mathcal D_X$ are exactly the line bundles of fibre degree 0 on $X$.  That any rank-one object in $\mathcal D_X$ is a line bundle of fibre degree 0 follows from Lemma \ref{lemma21}.  That any line bundle of fibre degree 0 lies in $\mathcal D_X$ follows from \cite[Proposition 6.51]{FMNT} and Theorem \ref{main1}.

That the second the third categories above are equivalent follow from Lemma \ref{lemma15}.
\end{proof}

\appendix

\section{Polynomial Stability Conditions} \label{PSC} 

Polynomial stability was defined on $D^b(X)$ by Bayer for any normal projective variety $X$ \cite[Theorem 3.2.2]{BayerPBSC}.  While the central charge for a Bridgeland stability condition takes values in $\Cfield$, the central charge for a polynomial stability condition takes values in the abelian group  $\Cfield [m]$ of polynomials over $\Cfield$.

The polynomial stability conditions we  concern ourselves with in this paper consist of the following data, where $X$ is a smooth projective $n$-fold:
\begin{enumerate}
\item the heart $\Ap = \langle \Coh_{\leq n-2}(X), \Coh_{\geq n-1}(X)[1]\rangle$, and
\item a group homomorphism (the central charge) $Z : K(X) \to \Cfield [m]$ of the form
$$ Z(E)(m) = \sum_{d=0}^n \int_X  \rho_d H^d  \cdot ch(E) \cdot U \cdot m^d$$
where
\begin{enumerate}
\item the $\rho_d \in \Cfield$ are nonzero, satisfy $\rho_0, \cdots, \rho_{n-2} \in \mathbb{H}$, $\rho_{n-1}, \rho_n \in -\mathbb{H}$, and their configurations are of either type W1 or W2 as defined in the beginning of Section \ref{moduli1},
\item $H \in \text{Amp}(X)_\RR$ is an ample class, and
\item $U =1+N$ where $N \in A^\ast(X)_{\mathbb{R}}$ is concentrated in positive degrees.
\end{enumerate}
\end{enumerate}

The configuration of the $\rho_i$ is compatible with the heart $\Ap$, in the sense that for every nonzero $E \in \Ap$, we have $Z(E)(m) \in \mathbb{H}$ for $m \gg 0$.  So there is a uniquely determined function germ $\phi (E)$  such that
\[
 Z(E)(m) \in \mathbb{R}_{>0} e^{i \pi \phi (E)(m)} \text{ for all } m \gg 0.
 \]
This allows us to define the notion of semistability for objects in $\Ap$.  We say that a nonzero object $E \in \Ap$ is \textit{$Z$-semistable} (resp.\ \textit{$Z$-stable}) if, for any nonzero subobject $G \hookrightarrow E$ in $\Ap$, we have $\phi (G)(m) \leq \phi (E)(m)$ for $m \gg 0$ (resp.\ $\phi (G)(m) < \phi (E)(m)$ for $m \gg 0$).  We also write $\phi (G) \preceq \phi (E)$ (resp.\ $\phi (G) \prec \phi (E)$) to denote this.  Harder-Narasimhan filtrations for polynomial stabilities exist \cite[Section 7]{BayerPBSC}.  The reader may refer to \cite{BayerPBSC} for more on the basics of polynomial stability.

\end{document}